\numberwithin{equation}{section}
\tikzset{>=latex}
\newcommand{\ie}{\textit{i.e. }}
\newcommand{\eg}{\textit{e.g. }}
\newcommand{\git}{/\!\!/}
\newcommand{\de}{{\text d}}
\newcommand{\iu}{{\text i}}
\newcommand{\eu}{{\text e}}
\newcommand{\ddfrac}[2]{\frac{\displaystyle{#1}}{\displaystyle{#2}}}
\DeclareMathOperator{\Ad}{Ad}
\DeclareMathOperator{\vd}{vd}
\DeclareMathOperator{\Tr}{Tr}
\DeclareMathOperator{\rk}{rk}
\DeclareMathOperator{\ch}{ch}
\DeclareMathOperator{\Hilb}{Hilb}
\DeclareMathOperator{\td}{td}
\DeclareMathOperator{\Ell}{Ell}
\DeclareMathOperator{\Ellform}{\mathcal E\ell\ell}
\DeclareMathOperator{\Hom}{Hom}
\DeclareMathOperator{\End}{End}
\DeclareMathOperator{\im}{Im}
\DeclareMathOperator{\ext}{Ext}
\DeclareMathOperator{\spec}{Spec}
\DeclareMathOperator{\Proj}{Proj}
\DeclareMathOperator{\Sch}{\mathsf Sch}
\DeclareMathOperator{\Sets}{\mathsf Sets}
\DeclareMathOperator{\quot}{Quot}
\DeclareMathOperator{\Quot}{\mathsf Quot}
\newtheorem{theorem}{Theorem}[section]
\newtheorem{proposition}[theorem]{Proposition}
\newtheorem{lemma}[theorem]{Lemma}
\newtheorem{conjecture}{Conjecture}
\newtheorem{definition}{Definition}
\newtheorem{remark}{Remark}[section]
\newtheorem{corollary}[theorem]{Corollary}
\newtheorem*{theorem*}{Theorem}
\newtheorem*{proposition*}{Proposition}
\newtheorem*{lemma*}{Lemma}
\newtheorem*{conjecture*}{Conjecture}
\newtheorem*{definition*}{Definition}
\newtheorem*{remark*}{Remark}
\newtheorem*{corollary*}{Corollary}
\title{\bf Flags of sheaves, quivers and symmetric polynomials}
\author[ ]{Giulio \textsc{Bonelli}\thanks{bonelli@sissa.it}\hspace{3em}Nadir \textsc{Fasola}\thanks{nfasola@sissa.it}\hspace{3em}Alessandro \textsc{Tanzini}\thanks{tanzini@sissa.it}}
\date{}
\begin{document}
\maketitle
\abstract{
We study the representation theory of the nested instantons quiver presented in \cite{Bonelli:2019lal}, which describes a particular class of surface defects in four-dimensional supersymmetric gauge theories. We show that the moduli space of its stable representations provides an ADHM-like construction for nested Hilbert schemes of points on $\mathbb C^2$, for rank one, and for the moduli space of flags of framed torsion-free sheaves on $\mathbb P^2$, for higher rank. We introduce a natural torus action on this moduli space and use equivariant localization to compute some of its (virtual) topological invariants, including the case of compact toric surfaces.
We conjecture that the generating function of holomorphic Euler characteristics 
for rank one is given in terms of polynomials in the equivariant weights, which, for specific numerical types, coincide with (modified) Macdonald polynomials. 
}

{
  \hypersetup{linkcolor=black}
  \tableofcontents
}

\section*{Introduction}\addcontentsline{toc}{section}{Introduction}

In \cite{Bonelli:2019lal} we introduced the moduli space of {\it nested instantons} as the moduli space of stable representations of a suitable quiver. This arises in the study of surface defects in 
supersymmetric gauge theory on $T^2\times {\mathcal C}_{g,k}$, where
$T^2$ is a real two torus and ${\mathcal C}_{g,k}$
a genus $g$ complex projective curve with $k$ marked points.
Let us briefly describe some string theory motivations before presenting the content of the paper.

\paragraph{String theory motivations:}
The D-brane set-up engineering the surface defect is described in \cite{Bonelli:2019lal}, and its analysis naturally led to a description in terms of representations of a quiver in the category of vector spaces, the D-branes being the objects and the open strings being the morphisms. Let us briefly resume the D-brane geometry and its relation with the relevant mathematical problems. One considers type IIB supersymmetric background given by 
$T^2\times T^*{\mathcal C}_{g,k}\times {\mathbb C}^2$, with 
$r$ D7-branes located at points of the fiber of the cotangent bundle
and 
$n$ D3-branes along $T^2\times {\mathcal C}_{g,k}$. The low energy effective theory of the D7-branes is {\it equivariant higher rank Donaldson-Thomas theory} \cite{Donaldson:1996kp} on the four-fold 
$T^2\times {\mathcal C}_{g,k}\times {\mathbb C}^2$, 
while the low energy effective theory of the D3-branes is {\it equivariant Vafa-Witten theory} on $T^2\times {\mathcal C}_{g,k}$, \cite{Vafa:1994tf}. In the chamber of small volume of ${\mathcal C}_{g,k}$, the effective theory 
describing the surface defect is encoded in the theory of maps from $T^2$
to the moduli space of stable representations of
the {\it comet shaped} quiver 
displayed in figure \ref{fig:quiver_d3global}. For $k=1$, this is described by the total space of a bundle ${\mathcal V}_g$
over the {\it nested instanton} moduli space, which in turn is the moduli space of stable representations of the quiver displayed in figure \ref{fig:quiver_local}. Let us remark that virtual invariants of $\mathcal V_g$ have a connection to the cohomology of character varieties of punctured Riemann surfaces, and in particular to the conjecture proposed in \cite{hausel2011} whose physical interpretation was provided in \cite{Chuang:2013wpa}. The interested reader can find the details in \cite{Bonelli:2019lal}.

\begin{figure}[htb!]
\centering
\vspace{-5mm}
\begin{tikzpicture}
\node[](F) at (2.5+10,-2){$\bullet$};
\node[](Gk0) at (2.5+10,0){$\bullet$};
\node[](Gk1) at (5+10,2){$\bullet$};
\node[](F1) at (6.5+10,1){$\bullet$};
\node[](Gk12) at (5+10,-2){$\bullet$};
\node[](F12) at (6.5+10,-3){$\bullet$};
\node[](label) at (7+10,2){$\cdots$};
\node[](label2) at (7+10,-2){$\cdots$};
\node[](dots1) at (11,0){$\cdots$};
\node[](2g) at (11,-.2){{\scriptsize $2g+2$}};
\node[](vdots1) at (5+10,0){$\vdots$};
\node[](vdots2) at (9+10,0){$\vdots$};
\node[](Gkn) at (9+10,2){$\bullet$};
\node[](Gkn2) at (9+10,-2){$\bullet$};
\node[](Fn) at (10.5+10,1){$\bullet$};
\node[](Fn2) at (10.5+10,-3){$\bullet$};
\draw[->](F.15+90) to[bend left,looseness=.6] (Gk0.165+90);
\draw[->](Gk0.195+90) to[bend left,looseness=.6] (F.345+90);
\draw[->](F1.125) to[bend right,looseness=.6] (Gk1.-20);
\draw[->](Gk1.-40) to[bend right,looseness=.6] (F1.145);
\draw[->](F12.125) to[bend right,looseness=.6] (Gk12.-20);
\draw[->](Gk12.-40) to[bend right,looseness=.6] (F12.145);
\draw[->](Gk1) to (Gk0);
\draw[->](Gk12) to (Gk0);
\draw[->](label) to (Gk1);
\draw[->](label2) to (Gk12);
\draw[->](Gkn) to (label);
\draw[->](Gkn2) to (label2);
\draw[->](Fn.125) to[bend right,looseness=.6] (Gkn.-20);
\draw[->](Gkn.-40) to[bend right,looseness=.6] (Fn.145);
\draw[->](Fn2.125) to[bend right,looseness=.6] (Gkn2.-20);
\draw[->](Gkn2.-40) to[bend right,looseness=.6] (Fn2.145);
\draw[-](Gk0) to[out=60+90,in=120+90,looseness=25] (Gk0);
\draw[-](Gk0) to[out=70+90,in=110+90,looseness=15] (Gk0);
\draw[-](Gk1) to[out=65+180,in=115+180,looseness=20] (Gk1);
\draw[-](Gk1) to[out=65,in=115,looseness=20] (Gk1);
\draw[-](Gkn) to[out=65+180,in=115+180,looseness=20] (Gkn);
\draw[-](Gkn) to[out=65,in=115,looseness=20] (Gkn);
\draw[-](Gk12) to[out=65+180,in=115+180,looseness=20] (Gk12);
\draw[-](Gk12) to[out=65,in=115,looseness=20] (Gk12);
\draw[-](Gkn2) to[out=65+180,in=115+180,looseness=20] (Gkn2);
\draw[-](Gkn2) to[out=65,in=115,looseness=20] (Gkn2);
\end{tikzpicture}\caption{The comet-shaped quiver.}\label{fig:quiver_d3global}
\end{figure}
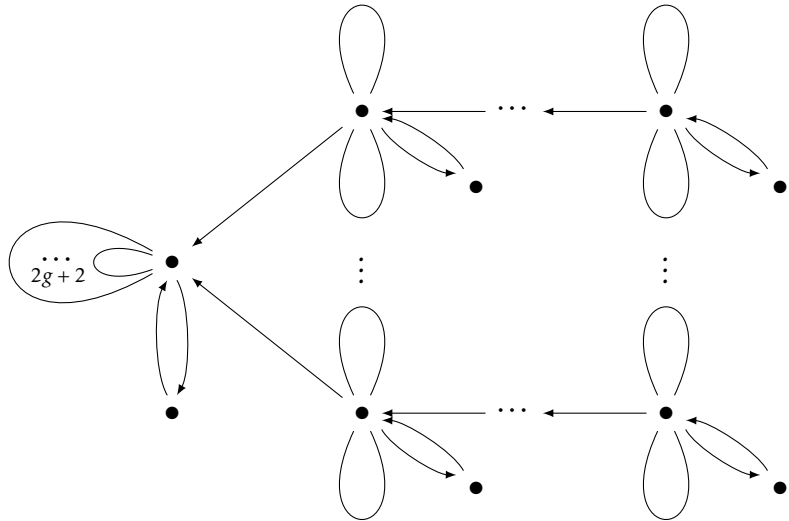

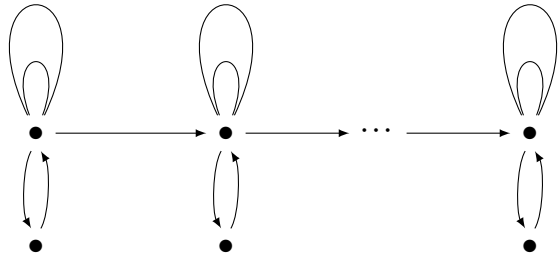
\begin{figure}[H]
\centering
\vspace{-5mm}
\begin{tikzpicture}
\node[](F0) at (2.5+10,-1.5){$\bullet$};
\node[](F1) at (5+10,-1.5){$\bullet$};
\node[](Fn) at (9+10,-1.5){$\bullet$};
\node[](Gk0) at (2.5+10,0){$\bullet$};
\node[](Gk1) at (5+10,0){$\bullet$};
\node[](label) at (7+10,0){$\cdots$};
\node[](Gkn) at (9+10,0){$\bullet$};
\draw[<-](F0.15+90) to[bend left,looseness=.6] (Gk0.165+90);
\draw[<-](Gk0.195+90) to[bend left,looseness=.6] (F0.345+90);
\draw[<-](F1.15+90) to[bend left,looseness=.6] (Gk1.165+90);
\draw[<-](Gk1.195+90) to[bend left,looseness=.6] (F1.345+90);
\draw[<-](Fn.15+90) to[bend left,looseness=.6] (Gkn.165+90);
\draw[<-](Gkn.195+90) to[bend left,looseness=.6] (Fn.345+90);
\draw[<-](Gk1) to (Gk0);
\draw[<-](label) to (Gk1);
\draw[<-](Gkn) to (label);
\draw[-](Gk0) to[out=65,in=115,looseness=25] (Gk0);
\draw[-](Gk0) to[out=70,in=110,looseness=15] (Gk0);
\draw[-](Gk1) to[out=65,in=115,looseness=25] (Gk1);
\draw[-](Gk1) to[out=70,in=110,looseness=15] (Gk1);
\draw[-](Gkn) to[out=65,in=115,looseness=25] (Gkn);
\draw[-](Gkn) to[out=70,in=110,looseness=15] (Gkn);
\end{tikzpicture}\vspace{2mm}\caption{The nested instantons quiver.}\label{fig:quiver_local}
\end{figure}
 
 \paragraph{Content of the paper:}
In this paper we concentrate on the study of representations of the {\it nested instantons} quiver with a {\it single} framing, namely we choose the dimension vector for the framing to be $\bold r=(r,0,\dots,0)$ where $r$ is the dimension of the rightmost framing node. We also study its relation to flags of framed torsion-free sheaves on ${\mathbb P}^2$ and {\it nested Hilbert schemes}, and compute some relevant virtual invariants via equivariant localisation.

We want to point out that the moduli space we are studying seems to be analogous to the Filt-scheme studied in \cite{mochizuki_filt} in the case of smooth projective curves. The importance of studying these moduli spaces on (smooth projective) surfaces lies in their application to the computation of monopole contributions to Vafa-Witten invariants defined in \cite{Tanaka:2017jom,Tanaka:2017bcw}. In fact these monopole contributions to Vafa-Witten invariants are expressed in terms of invariants of flags of sheaves, which in some cases reduce to nested Hilbert schemes, see \cite{Gottsche:2018meg,laarakker} for computations in this case. The deformation-obstruction theory and virtual cycle for the components of the monopole branch in Vafa-Witten theory giving rise to flags of higher rank sheaves were explicitly constructed in \cite{Sheshmani:2019tvp}. Nested Hilbert schemes on surfaces were interpreted in terms of degeneracy loci in \cite{Gholampour:2017dcp,Gholampour:2019wld}, where they are also shown to be equipped with a perfect obstruction theory. Similarly nested Hilbert schemes of points were also studied in \cite{2017arXiv170108899G}, and a perfect obstruction theory and virtual cycles are explicitly constructed. Their application to reduced DT and PT invariants are also discussed in \cite{2017arXiv170108899G,GSY2,diaconescu2018atiyah}.

In the following we give a summary of the result we obtained in this paper. In section \ref{sec1} we start our analysis by proving the following
\begin{theorem*}
The moduli space $\mathcal N(r,\bold n)$ of stable representation of the nested instantons quiver of numerical type $(r,\bold n)$ is a virtually smooth quasi-projective variety over $\mathbb C$ equipped with a natural action of $\mathbb T=T\times (\mathbb C^*)^r$, $T=(\mathbb C^*)^2$, and a perfect obstruction theory.
\end{theorem*}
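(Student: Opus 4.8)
The plan is to realise $\mathcal N(r,\bold n)$ as a GIT quotient of an affine variety of quiver data, exhibit it as the zero locus of a section of a vector bundle over a smooth ambient space, and read off the perfect obstruction theory from that description. First I would fix the linear data: vector spaces $V_0,\dots,V_N$ with $\dim V_i=n_i$ at the gauge nodes, a single framing space $W\cong\mathbb C^r$, and the affine space $\mathbb V$ parametrising all the quiver morphisms, namely the two loops $B_1^{(i)},B_2^{(i)}\in\End(V_i)$ at each node, the chain maps relating consecutive $V_i$, and the framing maps $I,J$. The gauge group $G=\prod_i GL(V_i)$ acts by change of basis, and the ADHM-type relations assemble into a single $G$-equivariant polynomial map $\mu\colon\mathbb V\to\mathbb W$ valued in a $G$-representation $\mathbb W$ (commutators and intertwining relations taking values in $\End(V_i)$ and $\Hom(V_i,V_{i-1})$ respectively). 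I set $X=\mu^{-1}(0)$, the affine variety of representations satisfying the relations.

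Next I would introduce King's stability via a character $\theta$ of $G$ and verify that $\theta$-stability is equivalent to the geometric condition that the representation be generated by the image of the single framing under the loops and chain maps, and that such stable points have trivial $G$-stabiliser. The GIT quotient $\mathcal N(r,\bold n)=X\git_\theta G$ is then a geometric quotient, and it is quasi-projective: $\mathbb V\git_\theta G=\Proj\big(\bigoplus_{m\ge 0}\mathbb C[\mathbb V]^{G,\theta^m}\big)$ is projective over the affine quotient $\spec\mathbb C[\mathbb V]^G$, hence quasi-projective, and $\mathcal N$ is the closed subscheme of its stable part cut out by $\mu=0$.

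For the obstruction theory I would pass to the smooth ambient variety $\mathfrak M^s=\mathbb V^s/G$, a genuine smooth quasi-projective variety precisely because stable points have trivial stabiliser, so that $\mathbb V^s\to\mathfrak M^s$ is a principal $G$-bundle. The relation map descends to a section $\bar\mu$ of the vector bundle $\mathcal E=\mathbb V^s\times_G\mathbb W$, and $\mathcal N(r,\bold n)$ is its zero locus inside $\mathfrak M^s$. The zero locus of a section of a vector bundle on a smooth variety carries a canonical two-term perfect obstruction theory
$$\phi\colon E^\bullet=\big[\,\mathcal E^\vee\to\Omega_{\mathfrak M^s}\,\big]\big|_{\mathcal N}\longrightarrow L_{\mathcal N}$$
concentrated in degrees $[-1,0]$, with $\mathcal E^\vee\to\Omega_{\mathfrak M^s}$ dual to $d\bar\mu$; this is exactly the assertion that $\mathcal N$ is virtually smooth, of virtual dimension $\dim\mathfrak M^s-\rk\mathcal E$.

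Finally, the torus $T=(\mathbb C^*)^2$ acts on $\mathbb C^2$ and thereby rescales the loops $B_1^{(i)},B_2^{(i)}$, while $(\mathbb C^*)^r$ rescales the framing $W\cong\mathbb C^r$; both commute with $G$ and preserve $\mu$ together with $\theta$-stability, so they descend to the claimed action of $\mathbb T=T\times(\mathbb C^*)^r$ on $\mathcal N(r,\bold n)$, under which the obstruction theory $E^\bullet\to L_{\mathcal N}$ is equivariant. I expect the main obstacle to be the stability analysis of the second step, matching King's numerical $\theta$-stability with the geometric generation condition and proving triviality of stabilisers, since it is this that upgrades the quotient stack to a smooth variety and makes the zero-locus construction of the perfect obstruction theory go through.
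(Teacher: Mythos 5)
Your GIT setup, the freeness of the stabilisers on stable points, the quasi-projectivity, and the construction of the $\mathbb T$-action all match the paper's argument and are fine. The genuine gap is in the obstruction-theory step: you treat the relation map $\mu$ as a section of the bundle $\mathcal E=\mathbb V^s\times_G\mathbb W$ with $\mathbb W$ the \emph{full} space of relations, and take the standard two-term complex $[\mathcal E^\vee\to\Omega_{\mathfrak M^s}]$. But for the nested instantons quiver the relations are \emph{not} independent: for each $i$ one has identities of the form $\bigl([B_1^{i-1},B_2^{i-1}]+\cdots\bigr)F^i-F^i[B_1^i,B_2^i]=\bigl(B_1^{i-1}F^i-F^iB_1^i\bigr)B_2^i-\cdots$, i.e.\ a syzygy valued in $\Hom(V_i,V_{i-1})$ relating the commutator relations at consecutive nodes to the intertwining relations. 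Consequently the section $\bar\mu$ never meets a generic point of the fibre, and while the zero-locus complex is still technically an obstruction theory, it is not the one this moduli problem carries: its obstruction space is $\operatorname{coker}(d_1)$ rather than $\ker(d_2)/\im(d_1)$, and its virtual dimension is too small by $\sum_{i\ge 1}n_in_{i-1}$. This is precisely why the paper's deformation complex $\mathcal C(X)$ has \emph{four} terms, with $\mathcal C^3=\bigoplus_i\Hom(V_i,V_{i-1})$ recording the syzygies, and why the substantive work in the proof is showing $H^0(\mathcal C(X))=H^3(\mathcal C(X))=0$ (via a distinguished triangle onto $\mathcal C(\mathcal A)\oplus\mathcal C(\mathcal B)\to\mathcal C(\mathcal A,\mathcal B)$, using injectivity of the $F^i$ for $H^0$ and stability for $H^3$) so that the complex is quasi-isomorphic to a two-term one. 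All of the localization formulas later in the paper use the $\mathcal C^3$-contribution $\sum_i\Lambda^2Q\otimes\Hom(V_i,V_{i-1})$ to the virtual tangent space, so this is not a cosmetic discrepancy.

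Two secondary points. First, your proposed geometric characterisation of stability (``generated by the image of the framing under the loops and chain maps'') is not the right one here: the chain maps $F^i\colon V_i\to V_{i-1}$ point \emph{towards} the framing node, so the spaces $V_{i>0}$ cannot be generated from $W$; the correct conditions (Proposition \ref{theta_stability}) are injectivity of every $F^i$ together with ADHM stability of $(W,V_0,B_1^0,B_2^0,I,J)$, and one must also check that the maps $G^i$ in the opposite direction vanish on stable representations before the relation space you use even makes sense. Second, the paper's notion of virtual smoothness is the existence of an embedding of $\mathcal N(r,\bold n)$ into a \emph{smooth} variety $\mathcal M(r,\bold n)$ obtained as a hyperk\"ahler quotient for the deformed relations \eqref{relsM1}--\eqref{relsM2}; your ambient $\mathfrak M^s=\mathbb V^s/G$ does supply a smooth embedding, so that part of your plan survives, but the perfect obstruction theory itself needs the syzygy term to be repaired.
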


We also prove that $\mathcal N(r,\bold n)$ embeds into a smooth projective variety $\mathcal M(r,\bold n)$, see Section 1.3.

In section \ref{sec2}, we construct the moduli space $\mathcal F(r,\boldsymbol\gamma)$ of flags of framed torsion
free sheaves on ${\mathbb P}^2$ and prove the existence of an isomorphism with 
$\mathcal N(r,\bold n)$. 
As a particular case, we have 
\begin{theorem*}
The moduli space of nested instantons $\mathcal N(1,\bold n)$ is isomorphic to the nested Hilbert scheme of points on $\mathbb C^2$, namely
\begin{equation}
\mathcal N(1,\bold n)=\mathbb X_0\git_\chi\mathcal G\simeq\Hilb^{\hat{\bold n}}(\mathbb C^2).
\end{equation}
\end{theorem*}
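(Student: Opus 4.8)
The plan is to unwind the two sides of the claimed isomorphism and match them node by node. On the left, $\mathcal N(1,\bold n)$ is the GIT quotient $\mathbb X_0 \git_\chi \mathcal G$ of the stable locus of representations of the nested instantons quiver with a single one-dimensional framing at the rightmost node. Writing out the representation explicitly for rank $r=1$, the data consist of vector spaces $V_0, V_1, \dots, V_\ell$ of dimensions $\bold n = (n_0, n_1, \dots, n_\ell)$, a loop pair $(B_1^{(i)}, B_2^{(i)})$ at each node coming from the two self-loops, a framing vector $I \in V_0$ (the single framing maps), and the chain maps between consecutive $V_i$'s. The first step is to write down the quiver relations defining $\mathbb X_0$ (the commutation and compatibility relations inherited from the moduli problem, as fixed earlier in Section~1), and to record the stability character $\chi$ so that the stable locus and the $\mathcal G = \prod_i GL(V_i)$ action are pinned down.

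Next I would recall, as a warm-up and as the bottom layer of the nesting, the classical ADHM description of $\Hilb^{n_0}(\mathbb C^2)$ as $\{(B_1, B_2, I) : [B_1, B_2] = 0,\ \text{stability}\}/GL(V_0)$, where stability is the cyclicity condition that $V_0$ is generated by $I$ under the $B$'s. This is Nakajima's picture, and the $r=1$ node-zero data of our quiver reproduces exactly this triple, so the projection to the bottom node gives a point of $\Hilb^{n_0}(\mathbb C^2)$, equivalently an ideal $Z_0 \subset \mathbb C[x,y]$ of colength $n_0$. The core of the argument is then to show that the remaining quiver data encode precisely a flag of ideals (nested subschemes) refining $Z_0$. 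Concretely, each chain map $V_{i} \to V_{i-1}$ together with the stability/commutation relations forces the image to be a $B$-invariant subspace, which under the ADHM dictionary corresponds to an \emph{inclusion} of ideals $Z_{i-1} \subseteq Z_i$, i.e. a point of the nested Hilbert scheme $\Hilb^{\hat{\bold n}}(\mathbb C^2)$ with colength vector $\hat{\bold n}$ obtained from $\bold n$ by taking partial sums. I would construct the map $\mathbb X_0^{s}/\mathcal G \to \Hilb^{\hat{\bold n}}(\mathbb C^2)$ this way and verify it is $\mathcal G$-invariant, hence descends to the quotient.

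To prove it is an isomorphism I would build the inverse explicitly: given a flag of ideals, set $V_i = H^0(\mathcal O/Z_i)$ (or the appropriate quotient spaces), let the $B$'s be multiplication by $x$ and $y$, let $I$ be the image of $1$, and let the chain maps be the natural surjections induced by the inclusions $Z_{i-1}\subseteq Z_i$; the quiver relations hold because multiplication operators commute and the maps are module homomorphisms, and stability follows from $1$ generating each quotient. One then checks the two constructions are mutually inverse, and that they are algebraic (functorial in families), so that the bijection upgrades to an isomorphism of schemes. The main obstacle, which deserves the most care, is matching the stability condition of the GIT problem against the \emph{scheme-theoretic} nesting condition: I expect the delicate point to be showing that $\chi$-stability is equivalent to simultaneous cyclicity of all the $V_i$ together with the chain maps being the correct surjections (not merely injections of the associated subschemes), so that one genuinely lands in the nested Hilbert scheme and not in some larger flag variety. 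Verifying this equivalence of stability notions, and confirming the numerical translation $\bold n \mapsto \hat{\bold n}$ of colengths, is where the real work lies; the construction of the maps themselves is, by contrast, essentially formal once the $r=1$ ADHM dictionary is in hand.
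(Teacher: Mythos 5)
Your forward map coincides with the paper's: stability forces each chain map $F^i$ to be injective and the node-zero datum $(V_0,B_1^0,B_2^0,I,J)$ to be ADHM-stable (with $J=0$ and all backward maps $G^i=0$ in rank one, by Corollary 1.3 and Proposition 2.3), so the images of the composites $F^1\circ\cdots\circ F^i$ are nested $(B_1^0,B_2^0)$-invariant subspaces of $V_0=\mathbb C[z_1,z_2]/I_0$, hence a chain of ideals. That part is fine and is exactly Proposition 2.2 read in one direction.

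The genuine gap is in your inverse. Setting $V_i=H^0(\mathcal O/Z_i)$ and taking the chain maps to be ``the natural surjections induced by $Z_{i-1}\subseteq Z_i$'' does not produce a point of $\mathcal N(1,\bold n)$: those spaces have dimension equal to the colength of $Z_i$ (the paper's $\hat n$'s), not the quiver dimensions $n_i$, and the induced maps $\mathcal O/Z_{i-1}\to\mathcal O/Z_i$ are surjections from a larger space onto a smaller one, whereas the quiver's chain maps must be \emph{injections} of the smaller space $V_i$ into the larger $V_{i-1}$ --- injectivity being precisely the stability condition \textbf{S1}. So your candidate representation has the wrong numerical type and is unstable whenever the colengths differ. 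The correct inverse, which is what the paper's Proposition 2.2 actually does, takes $V_i$ to be a complement of $\mathcal O/I_i$ inside $V_0=\mathcal O/I_0$ (equivalently the invariant subspace $I_i/I_0$), restricts $B^0_{1,2}$ to it to define $B^i_{1,2}$, and lets the chain maps be the inclusions. Two related symptoms of the same confusion: your claim that $\chi$-stability should amount to ``simultaneous cyclicity of all the $V_i$'' is not the right target (by Proposition 1.2 and Corollary 1.6 only node $0$ is required to be cyclic; the higher nodes contribute only injectivity of the $F^i$), and your worry about landing in ``some larger flag variety'' dissolves once the $V_{i>0}$ are identified as invariant subspaces rather than quotients, since $B^0$-invariance of $\im(F^1\cdots F^i)$ is exactly the condition that the corresponding subspace be an ideal. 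Finally, you never address the second framing map $J$ and the relation $[B_1^0,B_2^0]+IJ=0$; the paper disposes of it via the standard rank-one fact that stability forces $J=0$ (Proposition 2.3), and some such step is needed before the commuting-variety dictionary applies.
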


The moduli space of flags of sheaves is constructed by means of a functor
\[
\mathsf F_{(r,\boldsymbol\gamma)}:\Sch_{\mathbb C}^{\rm op}\to\Sets
\]
parametrizing flags of torsion-free sheaves on $\mathbb P^2$ in
\begin{proposition*}
The moduli functor $\mathsf F_{(r,\boldsymbol\gamma)}$ is representable. The (quasi-projective) variety representing $\mathsf F_{(r,\boldsymbol\gamma)}$ is the moduli space of flags of framed (coherent) torsion-free sheaves on $\mathbb P^2$, denoted by $\mathcal F(r,\boldsymbol\gamma)$.
\end{proposition*}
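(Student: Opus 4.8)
The plan is to prove representability of the moduli functor $\mathsf F_{(r,\boldsymbol\gamma)}$ by following the classical strategy of Nakajima and Huybrechts–Lehn for moduli of framed sheaves on $\mathbb P^2$, adapted to the nested/flag setting. Recall that a framed torsion-free sheaf $(\mathcal E,\Phi)$ on $\mathbb P^2$ (with framing $\Phi:\mathcal E|_{\ell_\infty}\xrightarrow{\sim}\mathcal O_{\ell_\infty}^{\oplus r}$ along a fixed line at infinity) admits a Beilinson-type monadic description, and a flag of such sheaves is a sequence of inclusions $\mathcal E_1\hookrightarrow\mathcal E_2\hookrightarrow\cdots$ compatible with the framings. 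The first step will be to set up the functor carefully on the level of families: for a test scheme $S$, $\mathsf F_{(r,\boldsymbol\gamma)}(S)$ consists of $S$-flat families of flags of framed torsion-free sheaves on $\mathbb P^2_S$ with the prescribed numerical invariants $\boldsymbol\gamma$, modulo isomorphism fixing the framing. I would first verify that these families have bounded Hilbert polynomials, so that the whole problem lives inside a single Quot scheme.

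\medskip

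Concretely, I would realize $\mathsf F_{(r,\boldsymbol\gamma)}$ as a locally closed subfunctor of a product of (relative, framed) Quot functors. First I would choose a sufficiently positive twist $N\gg 0$ so that every sheaf $\mathcal E_i$ in every flag of the relevant numerical type is $N$-regular; then $H^0(\mathcal E_i(N))$ has constant dimension $P_i(N)$, and each $\mathcal E_i$ is a quotient of $H^0(\mathcal E_i(N))\otimes\mathcal O_{\mathbb P^2}(-N)$. This exhibits every flag as a point of a product $\prod_i \quot$ of Quot schemes parametrizing the individual quotients, sitting inside the Grothendieck Quot scheme $\Quot(V\otimes\mathcal O(-N),P_i)$. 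The nesting conditions $\mathcal E_i\hookrightarrow\mathcal E_{i+1}$, the framing data $\Phi$ along $\ell_\infty$, and the compatibility of the framings with the inclusions are then cut out by closed/locally closed conditions on this ambient product of Quot schemes: injectivity of the induced maps is an open condition, agreement of the framings is closed, and torsion-freeness is open. Quotienting by the residual $\prod\mathrm{GL}(V_i)$ that acts on the presentations—rigidified by the framing—yields the representing space $\mathcal F(r,\boldsymbol\gamma)$ as a GIT or geometric quotient.

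\medskip

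The cleanest route, and the one I would actually pursue, is to leverage the isomorphism already established in the excerpt: by the second stated theorem and the section-2 isomorphism $\mathcal F(r,\boldsymbol\gamma)\simeq\mathcal N(r,\bold n)$, representability can be transported from the quiver side, where $\mathcal N(r,\bold n)$ is constructed as a GIT quotient $\mathbb X_0\git_\chi\mathcal G$ of a smooth space of quiver data by a reductive group, hence is automatically a quasi-projective variety representing the corresponding moduli functor of stable quiver representations. Thus the real content is to produce a natural transformation $\mathsf F_{(r,\boldsymbol\gamma)}\to h_{\mathcal N(r,\bold n)}$ and show it is an isomorphism of functors, i.e. to check that the ADHM-type correspondence between flags of framed sheaves and stable nested-instanton quiver representations holds \emph{in families}, not just on closed points. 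I would do this by the standard monad argument: the framed sheaf $\mathcal E_i$ and its cohomology functorially produce the vector spaces and linear maps $(B_1,B_2,I,J,\dots)$ of the quiver, flatness guaranteeing these have constant rank over $S$, and conversely the quiver data reconstruct a flat family of flags via the monad complex.

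\medskip

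The main obstacle I anticipate is precisely this family-level equivalence: verifying that the monad construction and its inverse are functorial in the test scheme $S$ and preserve flatness and torsion-freeness, rather than merely establishing a bijection fiberwise. One must check that base change commutes with the relevant cohomology (ensured by cohomology-and-base-change once the regularity/vanishing $H^1(\mathcal E_i(N))=0$ is arranged), that the framing along $\ell_\infty$ deforms compatibly, and that the nesting inclusions remain injective with torsion-free cokernels after arbitrary base change. Boundedness of the family of flags—needed to land inside a \emph{single} Quot scheme and to make the quotient quasi-projective—is the technical lemma underpinning everything, and I would secure it first using standard Kleiman-type boundedness for the finitely many Hilbert polynomials determined by $\boldsymbol\gamma$.
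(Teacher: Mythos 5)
Your proposal contains two distinct strategies, and the one you say you would actually pursue has a genuine logical problem in the context of this paper, while your first sketch is closer to what the paper does but organized differently.

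The paper's proof is an iterated \emph{relative} Quot construction: it starts from the universal framed sheaf $(U^{(0)},\varphi_0)$ on $\mathbb P^2\times\mathcal M(r,\gamma_0)$ (whose existence is already known, since $\mathcal M(r,\gamma_0)$ is a fine moduli space), observes that the forgetful map $(F_S,\varphi_S,Q_S^1,g_S^1)\mapsto(Q_S^1,g_S^1)$ identifies the two-step flag functor with the relative Quot functor $\Quot_{(U^{(0)},\gamma_1)}$ over $\mathcal M(r,\gamma_0)$ (the inverse being $F_S=\ker(g_S^1)$, with framing induced from $\varphi_0$), invokes Grothendieck representability, and then iterates: the universal sheaf $U^{(1)}$ on $\mathbb P^2\times\mathcal F(r,\gamma_0,\gamma_1)$ feeds the next Quot functor, and so on, until $\mathcal F(r,\boldsymbol\gamma)\simeq\quot^{\gamma_N}(U^{(N-1)})$. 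This bypasses any new boundedness, regularity, or GIT argument: all of that is absorbed into the already-constructed $\mathcal M(r,\gamma_0)$ and into Grothendieck's theorem. Your first sketch --- embedding into a product of absolute Quot schemes via an $N$-regularity bound and then quotienting by $\prod\mathrm{GL}(V_i)$ --- would also work, but it redoes the rigidification that the universal family already provides, and you would still need to check that the framing and nesting conditions are representable by locally closed subschemes; it is a heavier route to the same place.

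The route you say you would actually pursue, however, is circular as stated. The isomorphism $\mathcal F(r,\boldsymbol\gamma)\simeq\mathcal N(r,\bold n)$ is proved in the paper \emph{after} this proposition and presupposes that $\mathcal F(r,\boldsymbol\gamma)$ already exists as a variety representing $\mathsf F_{(r,\boldsymbol\gamma)}$; moreover, the paper's proof of that isomorphism is carried out only at the level of closed points (ADHM data $\leftrightarrow$ flags), not in families. To transport representability from $\mathcal N(r,\bold n)$ you would need precisely the family-level, base-change-compatible monad equivalence that you correctly identify as ``the main obstacle'' --- but you defer it rather than prove it, and it is by a wide margin the hardest part of that strategy. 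In short: your fallback sketch is a viable (if less economical) alternative to the paper's argument, but your preferred strategy inverts the paper's logical order and leaves its essential step unproved.
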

while its isomorphism with $\mathcal N(r,\bold n)$
is proven in 
\begin{theorem*}
The moduli space of stable representations of the nested instantons quiver is a fine moduli space isomorphic to the moduli space of flags of framed torsion-free sheaves on $\mathbb P^2$: $\mathcal F(r,\boldsymbol\gamma)\simeq\mathcal N(r,\bold n)$, as schemes, where $n_i=\gamma_i+\cdots+\gamma_N$.
\end{theorem*}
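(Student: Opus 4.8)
The plan is to exhibit the isomorphism at the level of functors of points and then invoke representability on both sides. Since $\mathcal F(r,\boldsymbol\gamma)$ represents $\mathsf F_{(r,\boldsymbol\gamma)}$ by the Proposition above, and $\mathcal N(r,\bold n)$ — realised as a GIT quotient $\mathbb X_0\git_\chi\mathcal G$ — is a fine moduli space for flat families of $\chi$-stable representations of the nested instantons quiver, it suffices to construct a natural transformation between these two functors over an arbitrary base $S\in\Sch_{\mathbb C}^{\rm op}$ and to prove that it is an isomorphism. The numerical dictionary $n_i=\gamma_i+\cdots+\gamma_N$ will emerge from a Chern-class computation along the way.

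First I would upgrade the classical ADHM/monad correspondence to the nested, relative setting. Given a flat family of stable quiver representations over $S$ — that is, vector bundles $V_0,\dots,V_N$ and $W$ on $S$ together with the endomorphisms $B_1,B_2$, the framing maps $I,J$, and the chain maps $\xi_i$ satisfying the quiver relations — I would assemble, for each node $i$, the relative Beilinson-type monad
\[
V_i\otimes\mathcal O_{\mathbb P^2}(-1)\longrightarrow (V_i\oplus V_i\oplus W)\otimes\mathcal O_{\mathbb P^2}\longrightarrow V_i\otimes\mathcal O_{\mathbb P^2}(1)
\]
on $\mathbb P^2\times S$, whose differentials are built from $B_1,B_2,I,J$ and the homogeneous coordinates in the usual way. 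The ADHM equation and the stability condition guarantee that the left map is a fibrewise injective bundle map and the right map is fibrewise surjective, so the middle cohomology $E_i$ is an $S$-flat family of framed torsion-free sheaves on $\mathbb P^2$ of rank $r$, trivialised along the line at infinity $\ell_\infty\subset\mathbb P^2$ by $J$; a Riemann--Roch computation on the monad then identifies $c_2(E_i)=n_i$.

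The nesting is produced by the chain maps: because each $\xi_i$ intertwines the ADHM data at adjacent nodes, it induces a morphism of the corresponding monads and hence a morphism $E_i\to E_{i-1}$ of framed sheaves compatible with the trivialisation at $\ell_\infty$. Here the stability condition does the essential work: it forces these induced maps to be injective with torsion quotients, so that one obtains a genuine flag of framed torsion-free sheaves with successive quotients of length $\gamma_i=n_i-n_{i+1}$, which is precisely a point of $\mathsf F_{(r,\boldsymbol\gamma)}(S)$. For the inverse I would run the construction backwards: given a flat family of flags, I would set $V_i=R^1\pi_*\big(E_i\otimes\mathcal O_{\mathbb P^2}(-1)\big)$ and recover $B_1,B_2,I,J$ from the Beilinson spectral sequence of each $E_i$, the flag inclusions inducing the chain maps $\xi_i$ on cohomology; cohomology-and-base-change ensures these are locally free of the right rank and that the whole package is flat over $S$.

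The main obstacle I anticipate is the precise translation between the quiver stability condition and the geometric flag condition, carried out in families. Concretely, one must show in both directions that $\chi$-stability of the representation is equivalent to the pair (torsion-freeness of each $E_i$) together with (injectivity of the induced maps $E_i\hookrightarrow E_{i-1}$), and that the two constructions are mutually inverse as natural transformations — which requires controlling the vanishing of the relevant higher direct images so that forming monad cohomology and taking $R^1\pi_*$ commute with arbitrary base change. Once this functorial equivalence is established, representability of both functors yields the claimed isomorphism $\mathcal F(r,\boldsymbol\gamma)\simeq\mathcal N(r,\bold n)$ of schemes, with the rank-one specialisation recovering the nested Hilbert scheme statement above.
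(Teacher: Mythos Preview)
Your high-level strategy --- establish a functorial bijection via the ADHM/monad dictionary and then invoke representability --- is the right one, and your care about base change is in fact more scrupulous than the paper's pointwise argument. But the core construction you propose does not work as written.

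The problem is the monad you assign to the node $V_i$ for $i>0$. You write down
\[
V_i\otimes\mathcal O_{\mathbb P^2}(-1)\longrightarrow (V_i\oplus V_i\oplus W)\otimes\mathcal O_{\mathbb P^2}\longrightarrow V_i\otimes\mathcal O_{\mathbb P^2}(1)
\]
with differentials ``built from $B_1,B_2,I,J$''. But in the nested instantons quiver the framing maps $I\colon W\to V_0$ and $J\colon V_0\to W$ exist only at the zeroth node; there are no maps between $W$ and $V_i$ for $i>0$. One cannot manufacture them by composing with the chain maps either: the relation $JF^1=0$ kills $J\circ(F^1\cdots F^i)$, and there is simply no map $W\to V_i$ in the data. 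Without the framing column the monad collapses to the commuting-pair complex for $(B_1^i,B_2^i)$, whose middle cohomology is a rank-zero sheaf, not a rank-$r$ framed torsion-free sheaf. Correspondingly your Riemann--Roch output $c_2(E_i)=n_i$ cannot be the second Chern class of any member of the flag: with $n_i=\gamma_i+\cdots+\gamma_N$ the numbers $n_1,\dots,n_N$ never appear as $c_2$'s in $\mathsf F_{(r,\boldsymbol\gamma)}$, whose sheaves have $c_2(E_j)=\gamma_0+\cdots+\gamma_j=n_0-n_{j+1}$.

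The paper's fix is to pass to the \emph{quotients} $\widetilde V_i=V_0/\im(F^1\cdots F^i)$. These inherit a genuine stable ADHM datum $(W,\widetilde V_i,\widetilde B_1^i,\widetilde B_2^i,\widetilde I^i,\widetilde J^i)$ from the zeroth node via the projection $V_0\twoheadrightarrow\widetilde V_i$, and hence a bona fide monad whose cohomology is a rank-$r$ framed torsion-free sheaf with $c_2=n_0-n_i$. The short exact sequences of ADHM representations $0\to\mathbf Z_i\to\mathbf S\to\mathbf Q_i\to 0$ (with $\mathbf Z_i$ corresponding to $V_i$ with trivial framing) then induce short exact sequences of monad complexes, and the long exact sequence in cohomology produces the flag together with the zero-dimensional quotients. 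Your inverse direction is essentially correct once reindexed: one recovers $V_0$ from the smallest sheaf $E_N$ and the $V_i$ as kernels of the induced surjections $H^1(E_N(-1))\twoheadrightarrow H^1(E_{N-i}(-1))$, not directly as $H^1(E_i(-1))$.
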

The ADHM construction of a particular class of flags of sheaves on $\mathbb P^2$ was given in \cite{negut}, where their connection to shuffle algebras on $K-$theory is also studied. Moreover the construction of the functor $\mathsf F_{(r,\boldsymbol\gamma)}$ shows that the moduli space of nested instantons is isomorphic to a relative $\Quot-$scheme. Perfect obstruction theories on $\Quot-$schemes and the description of their local model in terms of a quiver is discussed in \cite{ricolfi1,ricolfi2}.

In section \ref{sec3} we proceed to the evaluation of the relevant virtual invariants via equivariant localisation. The classification of the $T$-fixed locus of $\mathcal N(r,\bold n)$
is presented in the
\begin{proposition*}
The $T-$fixed locus of $\mathcal N(r,n_0,\dots,n_{s-1})$ can be described by $s-$tuples of nested coloured partitions $\boldsymbol\mu_1\subseteq\cdots\subseteq\boldsymbol\mu_{s-1}\subseteq\boldsymbol\mu_0$, with $|\boldsymbol\mu_0|=n_0$ and $|\boldsymbol\mu_{i>0}|=n_0-n_i$.
\end{proposition*}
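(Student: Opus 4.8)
The plan is to reduce the classification to a weight analysis of the linear-algebraic (ADHM-type) data underlying $\mathcal N$, in the spirit of the classical identification of the $T$-fixed points of $\Hilb^n(\mathbb C^2)$ with monomial ideals. Recall that $\mathcal N(r,\bold n)$ is realized as a geometric/GIT quotient of the space $\mathbb X$ of representations of the nested instantons quiver by the gauge group $G=\prod_i GL(V_i)$, the data being the loop endomorphisms $B_1^{(i)},B_2^{(i)}\in\End(V_i)$, the framing maps $I\colon W\to V_0$ and $J\colon V_0\to W$, and the nesting morphisms relating consecutive $V_i$, where $\dim V_i=n_i$ and $\dim W=r$. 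The torus $\mathbb T=T\times(\mathbb C^*)^r$ acts by scaling $B_1^{(i)},B_2^{(i)}$ with the two weights of $T=(\mathbb C^*)^2$ and by rotating the framing $W$ with $(\mathbb C^*)^r$. First I would note that a class $[x]\in\mathcal N$ is fixed precisely when, for every $t$ in the torus, the rescaled datum $t\cdot x$ lies in the $G$-orbit of $x$, i.e. $t\cdot x=g(t)\cdot x$ for some $g(t)\in G$.

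The crucial step is to upgrade the assignment $t\mapsto g(t)$ to an honest homomorphism $\rho\colon\mathbb T\to G$. Here I would invoke the fact that stable representations are simple, so a stable point has trivial stabiliser in $G$; this rigidity makes $g(t)$ unique, and uniqueness together with commutativity of the $T$-action and the $G$-action forces $\rho$ to be a genuine group homomorphism. I expect this to be the main obstacle, since the lift must be checked to be simultaneously consistent at every node and compatible with the nesting morphisms, rather than node by node. Granting it, $\rho$ endows each $V_i$ with the structure of a $\mathbb T$-representation, hence with a weight decomposition $V_i=\bigoplus V_i(a,b,\alpha)$, and the defining morphisms become weight-homogeneous: $B_1^{(i)}$ shifts the first weight by one, $B_2^{(i)}$ the second, $I$ matches the framing weights, and the nesting maps preserve all weights.

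For $r=1$ the analysis then reduces to the classical one: the relation $[B_1^{(0)},B_2^{(0)}]=0$ together with the cyclicity imposed by stability—namely that all of $V_0$ is generated from $I(W)$ under $B_1^{(0)},B_2^{(0)}$—forces the occupied weights to fill out a Young diagram, so that $V_0$ is the monomial quotient attached to an ideal $I_{\boldsymbol\mu_0}$ with $|\boldsymbol\mu_0|=n_0$. For general $r$ the two-dimensional torus $T$ produces this partition structure, while the framing torus $(\mathbb C^*)^r$ splits $W=\bigoplus_{\alpha=1}^r W_\alpha$ into weight lines and refines each fixed component into isolated points: applying stability colour by colour decomposes $V_0$ into $r$ cyclic pieces, so the fixed datum is recorded by a coloured partition $\boldsymbol\mu_0$.

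Finally I would read off the nesting through the isomorphism $\mathcal F(r,\boldsymbol\gamma)\simeq\mathcal N(r,\bold n)$ established earlier: at a fixed point the flag of framed torsion-free sheaves is a flag of (coloured) monomial ideals, and the weight-preserving nesting morphisms identify each $V_{i>0}$ with a subquotient of $V_0$ of dimension $n_i=n_0-|\boldsymbol\mu_i|$, whence $|\boldsymbol\mu_{i>0}|=n_0-n_i$, while strictness of the flag translates into the chain of inclusions $\boldsymbol\mu_1\subseteq\cdots\subseteq\boldsymbol\mu_{s-1}\subseteq\boldsymbol\mu_0$. Conversely, any such $s$-tuple of nested coloured partitions manifestly assembles into a weight-homogeneous, stable, and $\mathbb T$-invariant datum, so the correspondence is a bijection, completing the classification.
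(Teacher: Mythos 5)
Your proposal is correct and follows essentially the same route as the paper: lift the torus action to the gauge group on stable (hence stabilizer-free) points, decompose the $V_i$ into weight spaces, invoke the classical classification of ADHM/Hilbert-scheme fixed points by (coloured) partitions, and read off the nesting from the weight-homogeneous injective morphisms $F^i$. The only presentational difference is that the paper routes the argument through the quotient ADHM data $\tilde V_i=V_0/\im(F^1\cdots F^i)$ and the known fixed loci of $\mathcal M(r,n_0-n_i)$, whereas you make explicit the upgrade of $t\mapsto g(t)$ to a homomorphism and work directly with the $V_i$; both amount to the same classification.
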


In \ref{sec:euler} we compute the generating function of the virtual Euler characteristics of 
$\mathcal N(1,\bold n)$, see eq.\eqref{uno} for the explicit combinatorial formula.
We conjecture that, by summing over the nested partitions, this generating function is expressed in terms of polynomials:

\begin{conjecture*}
The generating function
$\chi^{\rm vir}(\mathcal N(1,n_0,\dots,n_N);\mathfrak{q}_1^{-1},\mathfrak q_2^{-1})=\sum_{\mu_0}P_{\mu_0}(q,t)/N_{\mu_0}(q,t)$ is such that
\[
P_{\mu_0}(q,t)=\frac{Q_{\mu_0}(q,t)}{(1-qt)^N},
\]
with $Q_{\mu_0}(q,t)\in\mathbb Z[q,t]$.
\end{conjecture*}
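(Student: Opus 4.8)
The plan is to attack the conjecture directly through the equivariant localization formula, using the explicit classification of the $T$-fixed locus supplied by the preceding proposition. Since the fixed points of $\mathcal N(1,n_0,\ldots,n_N)$ are indexed by nested chains $\mu_1\subseteq\cdots\subseteq\mu_N\subseteq\mu_0$, the equivariant virtual Euler characteristic is a finite sum over such chains, each summand being the K-theoretic contribution $\prod_w(1-w^{-1})^{-1}$ assembled from the weights $w=q^at^b$ (here $q,t$ are the torus weights $\mathfrak q_1,\mathfrak q_2$) of the virtual tangent space $T^{\rm vir}$ at the corresponding fixed point. The first step is therefore to write the character of $T^{\rm vir}$ at a nested fixed point in purely combinatorial terms — arm- and leg-lengths of the boxes of the $\mu_i$ together with the cross terms coupling consecutive ideals in the flag — which I extract from the perfect obstruction theory of the first theorem, equivalently from the flag-of-sheaves / relative $\Quot$-scheme description of Section \ref{sec2}.

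Next I would organize the sum by the outer partition $\mu_0$. The weights of $T^{\rm vir}$ split into a part depending only on $\mu_0$, reproducing the tangent space of $\Hilb^{n_0}(\mathbb C^2)$, whose localization factor I collect into the normalization $N_{\mu_0}(q,t)=\prod_{\square\in\mu_0}(1-q^{-l(\square)}t^{a(\square)+1})(1-q^{l(\square)+1}t^{-a(\square)})$ (for a suitable arm/leg convention), and a nesting part involving the smaller partitions $\mu_1,\ldots,\mu_N$. After factoring out $N_{\mu_0}$, the object to control is the inner sum $P_{\mu_0}(q,t)=\sum_{\mu_1\subseteq\cdots\subseteq\mu_N\subseteq\mu_0}(\text{nesting contribution})$, a sum of rational functions over all chains with the prescribed top partition, and the claim is that this inner sum collapses to $Q_{\mu_0}/(1-qt)^N$ with an integral numerator.

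The structural expectation driving the proof is that each of the $N$ nesting steps contributes exactly one factor of $(1-qt)^{-1}$ — the weight $qt$ being that of $\Lambda^2\mathbb C^2$ — while all other apparent poles at $q^at^b=1$ cancel after summing over the chain. I would establish this by induction on $N$, peeling off the smallest partition $\mu_1$ and showing that the sub-sum over $\mu_1$ telescopes: the cross term between $\mu_1$ and $\mu_2$ in the virtual tangent space is arranged precisely so that its localization contribution, summed over the admissible $\mu_1$, produces a single simple pole at $qt=1$ times a Laurent polynomial. The key technical input here is a cohomology vanishing (rigidity) statement guaranteeing that, fiberwise over $\mu_0$, the nested Hilbert scheme carries no obstructions transverse to the $(1-qt)$ direction, so that the denominators built from generic hook weights cancel in pairs.

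The step I expect to be the genuine obstacle is the \emph{integrality} of $Q_{\mu_0}$, i.e. that after clearing $(1-qt)^N$ the numerator lies in $\mathbb Z[q,t]$ rather than merely in $\mathbb Q(q,t)$. Cancellation of the spurious poles should be a formal consequence of the rigidity of the obstruction theory, but showing that no residual denominators $(1-q^at^b)$ survive, and that the resulting polynomial has integer coefficients, amounts to a nontrivial combinatorial identity on sums over chains of partitions. In the special numerical types where $P_{\mu_0}$ is expected to specialize to a (modified) Macdonald polynomial, this integrality is exactly the content of known integrality and positivity theorems for Macdonald polynomials, via Haiman's geometry of the isospectral Hilbert scheme; the general case would require either an independent combinatorial argument or an extension of those results, and it is here that I anticipate the real work.
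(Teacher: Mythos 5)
The statement you are asked to prove is, in the paper, only a \emph{conjecture}: the authors do not prove it, they state it and report having verified it computationally up to $n_0=10$. Your proposal correctly reconstructs the setting in which the conjecture lives --- the $T$-fixed points of $\mathcal N(1,n_0,\dots,n_N)$ are nested chains $\mu_1\subseteq\cdots\subseteq\mu_N\subseteq\mu_0$, the localization formula organizes $\chi^{\rm vir}$ as a sum over $\mu_0$ of $N_{\mu_0}^{-1}$ times an inner sum over the smaller partitions, with $N_{\mu_0}$ the arm/leg product coming from $T_{\tilde Z}\mathcal M(1,n_0)$ and $P_{\mu_0}$ the inner sum of the nesting factors $T_{\mu_0,\mu_1}W_{\mu_0,\dots,\mu_N}$. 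That much agrees with what the paper actually establishes in the section on the virtual Euler characteristic. But everything after that setup is asserted rather than proved. The claim that each of the $N$ nesting steps contributes exactly one pole at $qt=1$ while all other denominators $(1-q^at^b)$ cancel after summing over the chain \emph{is} the conjecture; calling it a ``structural expectation'' and proposing to establish it by a telescoping induction on $N$ does not discharge it, because the telescoping identity is never written down, the ``rigidity / cohomology vanishing'' you invoke is neither precisely formulated nor proved, and you yourself flag the integrality of $Q_{\mu_0}$ as the step where ``the real work'' lies.

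So the honest assessment is that there is no proof here to compare with the paper's, because neither you nor the paper proves the statement. What you have written is a plausible research program whose first step (the localization setup and the factoring out of $N_{\mu_0}$) matches the paper's actual derivation of the combinatorial formula, and whose remaining steps (cancellation of all poles except a single $(1-qt)^{-N}$, and integrality of the numerator) are exactly the open content of the conjecture. If you want to make genuine progress, the smallest nontrivial case to attack explicitly is $N=1$ with $|\mu_0|-|\mu_1|=1$, where the sum over $\mu_1$ runs over the removable boxes of $\mu_0$ and the claimed statement reduces to a finite rational-function identity in $q,t$ that can be checked by hand and possibly generalized; the paper's companion conjecture, identifying $Q_{\mu_0}$ with sums of modified Kostka polynomials for the staircase nesting profile, does suggest that Macdonald--Haiman technology is the right tool, as you guess, but no such argument is supplied either in your proposal or in the paper.
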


For specific profiles of the nesting,
these polynomials are conjectured to compute sums of $(q,t)-$Kostka polynomials: 
\begin{conjecture*}
When $|\mu_0|=|\mu_N|+1=|\mu_{N-1}|+2=\cdots=|\mu_1|+N$ we have
\[
\begin{split}
        Q_{\mu_0}(q,t)&=\left\langle h_{\mu_0}(\bold x),\widetilde{H}_{\mu_0}(\bold x;q,t)\right\rangle\\
        &=\left\langle h_{\mu_0}(\bold x),\sum_{\lambda,\nu\in\mathcal P(n_0)}\widetilde{K}_{\lambda,\mu_0}(q,t)K_{\mu_0,\nu}m_\nu(\bold x)\right\rangle\\
        &=\sum_{\substack{\lambda\in\mathcal P(n_0)\\ m_\lambda(\bold x)\neq 0}}\widetilde{K}_{\lambda,\mu_0}(q,t),
\end{split}
\]
where the Hall pairing $\langle-,-\rangle$ is such that $\langle h_\mu,m_\lambda\rangle=\delta_{\mu,\lambda}$ and $\widetilde H_{\mu}(\bold x;q,t)$, $\widetilde K_{\lambda,\mu}(q,t)$ are the modified Macdonald polynomials and the modified Kostka polynomials, respectively.
\end{conjecture*}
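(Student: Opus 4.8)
The plan is to connect the explicit localization output \eqref{uno} to the combinatorics of modified Macdonald polynomials through the intermediate object $Q_{\mu_0}$, and to split the chain of equalities into two stages: a geometric identity $Q_{\mu_0}=\langle h_{\mu_0},\widetilde H_{\mu_0}\rangle$, and the purely symmetric-function unpacking of that pairing into $(q,t)$-Kostka data. I would begin by making $Q_{\mu_0}$ fully explicit. By the Proposition classifying the $T$-fixed locus, the fixed points of $\mathcal N(1,n_0,\dots,n_N)$ are nested chains $\mu_1\subseteq\cdots\subseteq\mu_N\subseteq\mu_0$, and \eqref{uno} expresses $\chi^{\rm vir}$ as a sum over these chains of a ratio of $K$-theoretic equivariant weights built from the arm/leg data of the boxes. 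Fixing the top partition $\mu_0$ and collecting the corresponding terms produces the summand $P_{\mu_0}/N_{\mu_0}$, where $N_{\mu_0}(q,t)$ is the virtual contribution of the fixed point $\mu_0\in\Hilb^{n_0}(\mathbb C^2)$ and $P_{\mu_0}$ records the sub-chains hanging below $\mu_0$; the target $Q_{\mu_0}=(1-qt)^N P_{\mu_0}$ is the integral numerator predicted by the companion conjecture. Under the staircase hypothesis $|\mu_0|=|\mu_N|+1=\cdots=|\mu_1|+N$ these sub-chains are exactly the saturated chains in Young's lattice descending from $\mu_0$, so $Q_{\mu_0}$ becomes a weighted enumeration of box-by-box removals from $\mu_0$, each step contributing an explicit factor in $q,t$.

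Next I would translate the target $\langle h_{\mu_0},\widetilde H_{\mu_0}\rangle$ into a comparably explicit combinatorial sum. Expanding $\widetilde H_{\mu_0}=\sum_\lambda\widetilde K_{\lambda,\mu_0}s_\lambda$ and $s_\lambda=\sum_\nu K_{\lambda,\nu}m_\nu$, the Hall pairing $\langle h_{\mu_0},m_\nu\rangle=\delta_{\mu_0,\nu}$ collapses the expression to a combination of $(q,t)$-Kostka polynomials, which is the content of the second and third lines of the statement; equivalently, by the Haglund--Haiman--Loehr formula, $\langle h_{\mu_0},\widetilde H_{\mu_0}\rangle$ is the coefficient of $m_{\mu_0}$ in $\widetilde H_{\mu_0}$, i.e. the generating function $\sum_\sigma q^{\operatorname{inv}(\sigma)}t^{\operatorname{maj}(\sigma)}$ over fillings $\sigma$ of the diagram of $\mu_0$ of content $\mu_0$. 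This reduces the conjecture to an identity between two manifestly $(q,t)$-combinatorial quantities attached to the single partition $\mu_0$.

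The heart of the argument is then to match these two enumerations. The natural device is the Pieri/branching structure of modified Macdonald polynomials: multiplication by $h_1$ adds a box with a coefficient whose $q,t$-dependence is precisely a ratio of the same arm/leg hook factors that appear in the localization weight of a single box-removal step. I would therefore attempt to peel the chain one box at a time, identifying each removal weight in $Q_{\mu_0}$ with the corresponding Macdonald--Pieri coefficient, and resum over all saturated chains; telescoping should reproduce the normalized coefficient $\langle h_{\mu_0},\widetilde H_{\mu_0}\rangle$, and in particular explain why $N_{\mu_0}$ is exactly the denominator needed to convert the bare equivariant weight into the Pieri coefficient, as well as the factor $(1-qt)^{-N}$ predicted by the integrality conjecture.

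The main obstacle is making this matching uniform in $\mu_0$ and controlling the resummation, rather than verifying it case by case (which is all the present numerical evidence provides). Two points are delicate: fixing the precise dictionary between the two equivariant weights of $T=(\mathbb C^*)^2$ and the Macdonald parameters $q,t$ (including signs and the normalization hidden in $N_{\mu_0}$), and proving that the per-step Pieri identity integrates to the full statement for arbitrary shapes. The most promising route to a genuine proof is geometric: to identify $Q_{\mu_0}$ with an equivariant trace over the fibre of the Procesi bundle at the fixed point $\mu_0$, whose bigraded Frobenius characteristic is $\widetilde H_{\mu_0}$ by Haiman's work, realizing the staircase nested instanton space as an iterated flag correspondence over $\Hilb^{n_0}(\mathbb C^2)$ and checking that the perfect obstruction theory of the first Theorem is compatible with this bundle; the shuffle-algebra description of flags of sheaves in \cite{negut} provides an alternative operator-theoretic framework for the same identification. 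Establishing this compatibility, which is what would upgrade the term-by-term combinatorial coincidence into a proof valid for all $\mu_0$, is exactly the step I expect to be hardest and is presumably why the statement remains conjectural.
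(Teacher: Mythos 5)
The statement you are trying to prove is not proved in the paper at all: it is stated as a conjecture and the authors only report having checked it numerically up to $n_0=10$. So there is no paper proof to compare against, and your proposal must be judged as a standalone attempt. As such, it contains a genuine gap. The second and third equalities in the chain are indeed just symmetric-function bookkeeping (expand $\widetilde H_{\mu_0}$ in the monomial basis and apply $\langle h_{\mu_0},m_\nu\rangle=\delta_{\mu_0,\nu}$), and you identify this correctly. But the entire content of the conjecture is the first equality, $Q_{\mu_0}=\langle h_{\mu_0},\widetilde H_{\mu_0}\rangle$, and for that you only name two possible strategies (box-by-box peeling via Macdonald--Pieri coefficients with a telescoping resummation, or an identification of $Q_{\mu_0}$ with an equivariant trace on the Procesi bundle) without executing either. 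The crucial step --- showing that the localization weight $T_{\mu_0,\mu_1}W_{\mu_0,\dots,\mu_N}/N_{\mu_0}$ of a single box-removal in a saturated chain equals the corresponding Pieri coefficient, uniformly in $\mu_0$ and including the $(1-qt)^{-N}$ normalization --- is exactly the identity one would have to prove, and no argument is given for it. You acknowledge this honestly, but it means the proposal is a research plan rather than a proof.

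One further caution: before investing in the matching step, the dictionary between the two sides needs to be pinned down more carefully than your sketch does. The paper's own formula in the second line of the conjecture writes $K_{\mu_0,\nu}$ where the expansion $s_\lambda=\sum_\nu K_{\lambda,\nu}m_\nu$ would give $K_{\lambda,\nu}$, and the third line sums $\widetilde K_{\lambda,\mu_0}$ without the Kostka multiplicities $K_{\lambda,\mu_0}$ that the Hall pairing would naively produce; reconciling these (or deciding which normalization of the pairing and of $\widetilde H_{\mu_0}$ is intended) is a prerequisite for any term-by-term comparison with the localization sum. Your identification of the pairing with the Haglund--Haiman--Loehr coefficient of $m_{\mu_0}$ is the right way to make the target concrete, but it does not by itself resolve this normalization issue, nor does it supply the missing Pieri-type identity.
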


In \ref{sec:hirzebruch} we compute the generating function of the virtual $\chi_{-y}$-genus
of $\mathcal N(1,\bold n)$, see eq.\eqref{due}, and of 
$\mathcal N(r,\bold n)$ , see eq.\eqref{chi_y_r}.

We also show that, by specialising at $y=1$, one gets that the generating function of nested partitions of arbitrary 
length is the Macmahon function as expected, see eq. \eqref{tre}.

In \ref{sec:elliptic} we compute the generating function of the virtual elliptic genus
of $\mathcal N(1,\bold n)$, see eq.\eqref{quattro}, and of 
$\mathcal N(r,\bold n)$ , see eq.\eqref{ell_vir_r}.

Finally, in section \ref{sec4}, we extend our results to  
${\mathbb P}^2$ and ${\mathbb P}^1\times{\mathbb P}^1$ in the case of $\chi_{-y}-$genera, see formulae
\eqref{cinque} and \eqref{sei} respectively. Notice that the choice of computing $\chi_{-y}-$genera was due to the expected simple polynomial dependence in $y$. Everything which was done in this context is however completely general and holds for any complex genus.

\vspace{7mm}

{\bf Acknowledgements:}
We thank U. Bruzzo, E. Diaconescu, L. G\"ottsche, T. Hausel, M. Kool, A. Mellit, A. T. Ricolfi, F. Rodriguez-Villegas, A. Sheshmani, Y. Tanaka for useful discussion. The work of G.B. is partially supported by INFN - ST\&FI and by the PRIN project "Non-perturbative Aspects Of Gauge Theories And Strings".
The work of N.F. and A.T. is partially supported by INFN - GAST and by the PRIN project "Geometria delle variet\`a algebriche".

\section{The nested instantons quiver}\label{sec1}

\subsection{Quiver representations and stability}
In the following we will mainly be interested in studying the following quiver, which will be called the \textit{nested instantons quiver}
\begin{figure}[H]
\centering
\begin{equation}\label{quiv_tail}
\begin{tikzcd}
V_{N} \arrow[out=70,in=110,loop,swap,"\alpha_{N}"] \arrow[out=250,in=290,loop,swap,"\beta_{N}"] \arrow[r,shift left=.5ex,"\phi_{N}"] & \cdots \arrow[r,shift left=.5ex,"\phi_2"] \arrow[l,shift left=.5ex,"\gamma_{N}"] & V_1 \arrow[out=70,in=110,loop,swap,"\alpha_1"] \arrow[out=250,in=290,loop,swap,"\beta_1"] \arrow[r,shift left=.5ex,"\phi_1"] \arrow[l,shift left=.5ex,"\gamma_2"] & V_0 \arrow[l,shift left=.5ex,"\gamma_1"]  \arrow[out=70,in=110,loop,swap,"\alpha_0"] \arrow[out=250,in=290,loop,swap,"\beta_0"] \arrow[r,shift left=.5ex,"\eta"] & W \arrow[l,shift left=.5ex,"\xi"]
\end{tikzcd}
\end{equation}
\end{figure}
with relations
\begin{displaymath}
\begin{split}
&[\alpha_0,\beta_0]+\xi\eta=0,\quad [\alpha_i,\beta_i]=0,\quad \alpha_i\phi_i-\phi_i\alpha_{i+1}=0=\beta_i\phi_i-\phi_i\beta_{i+1}\\
&\gamma_i\alpha_i-\alpha_{i+1}\gamma_i=0=\gamma_i\beta_i-\beta_{i+1}\gamma_i,\quad \phi_i\gamma_i=0,\quad \eta\phi_1=0,\quad \gamma_1\xi=0
\end{split}
\end{displaymath}

Given 
\begin{displaymath}
\begin{split}
\mathbb X=&\End V_0^{\oplus 2}\oplus\Hom(V_0,W)\oplus\Hom(W,V_0)\oplus\End(V_1)^{\oplus 2}\oplus\Hom(V_1,V_0)\\
&\oplus\Hom(V_0,V_1)\oplus\cdots\oplus\End(V_{N})^{\oplus 2}\oplus\Hom(V_{N},V_{N-1})\oplus\Hom(V_{N-1},V_{N})
\end{split}
\end{displaymath}
a representation of numerical type $(r,\bold n)$ of \eqref{quiv_tail} in the category of vector spaces will be given by the datum of $X=\mathbb W\oplus h$, with $\mathbb W=(W,V_0,\dots,V_{N})$, with $\dim W=r$ and $\dim V_i=n_i$, and $$\mathbb X\ni h=(B_1^0,B_2^0,I,J,B_1^1,B_2^1,F^1,G^1,\dots),$$ satisfying
\begin{equation}
\begin{split}
&[B_1^0,B_2^0]+IJ=0,\quad [B_1^i,B_2^i]=0,\quad B_1^iF^i-F^iB_1^{i+1}=0=B_2^iF^i-F^iB_2^{i+1}\\
&G^iB_1^i-B_2^{i+1}G^i=0=G^iB_2^i-B_2^{i+1}G^i,\quad F^iG^i=0,\quad JF^1=0,\quad G^1I=0
\end{split}
\end{equation}
which we will call \textit{nested ADHM equations}. In the following we need to address the problem of King stability for representations of the nested instantons quiver.
\begin{definition}
Let $\Theta=(\boldsymbol\theta,\theta_\infty)\in\mathbb Q^{s+1}$ be such that $\Theta(X)=\bold n\cdot\boldsymbol\theta+r\theta_{\infty}=0$. We will say that a framed representation $X$ of \eqref{quiv_tail} is $\Theta-$semistable if
\begin{itemize}
\item $\forall 0\neq\tilde X\subset X$ of numerical type $(0,\tilde{\bold n})$ we have $\Theta(\tilde X)=\boldsymbol\theta\cdot\tilde{\bold n}\le 0$;
\item $\forall 0\neq\tilde X\subset X$ of numerical type $(\tilde r,\tilde{\bold n})$ we have $\Theta(\tilde X)=\boldsymbol\theta\cdot\tilde{\bold n}+\tilde r\theta_\infty\le 0$.
\end{itemize}
If strict inequalities hold $X$ is said to be $\Theta-$stable.
\end{definition}
In \cite{bruzzo2011,flach_jardim} the two node case, namely $N=1$ was considered and we can here generalize their result to the more general nested instantons quiver \eqref{quiv_tail}.
\begin{proposition}\label{theta_stability}
Let $X$ be a representation of \eqref{quiv_tail} of numerical type $(r,\bold n)\in\mathbb N^{N+2}_{>0}$, then choose $\theta_i>0$, $\forall i>0$ and $\theta_0$ s.t. $\theta_0+n_1\theta_1+\cdots n_{s-1}\theta_{s-1}<0$. The following are equivalent:
\begin{enumerate}[label=(\roman*)]
\item $X$ is $\Theta-$stable;
\item $X$ is $\Theta-$semistable;
\item $X$ satisfies the following conditions:
\begin{description}
\item[S1] $F^i\in\Hom(V_{i+1},V_i)$ is injective, $\forall i\ge 1$;
\item[S2] the ADHM datum $\mathcal A=(W,V_0,B_1^0,B_2^0,I,J)$ is stable.
\end{description}
\end{enumerate}
\end{proposition}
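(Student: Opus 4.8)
The plan is to prove the cycle of implications $(i)\Rightarrow(ii)\Rightarrow(iii)\Rightarrow(i)$, since $\Theta$-stability trivially implies $\Theta$-semistability, leaving the substantive content in the steps $(ii)\Rightarrow(iii)$ and $(iii)\Rightarrow(i)$. The key structural observation driving everything is the sign choice: with $\theta_i>0$ for $i\ge 1$ and $\theta_0$ very negative, a subrepresentation $\tilde X\subset X$ of type $(\tilde r,\tilde{\bf n})$ has $\Theta(\tilde X)$ dominated by the large negative contribution from $\theta_0$ whenever $\tilde r = r$ (i.e.\ $\tilde X$ contains the framing), whereas unframed subrepresentations ($\tilde r=0$) are penalized by the positive $\theta_i$'s. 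This dichotomy is what lets the abstract numerical stability conditions detect the concrete maps $F^i$ and the ADHM substructure.

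First I would carry out $(ii)\Rightarrow(iii)$ by contraposition, constructing destabilizing subrepresentations whenever S1 or S2 fails. If some $F^i$ fails to be injective, its kernel generates (after closing up under the loop maps $B_1^{i+1},B_2^{i+1}$, using the compatibility relations $B_a^iF^i=F^iB_a^{i+1}$ and $F^iG^i=0$) a nonzero unframed subrepresentation $\tilde X$ supported in the tail nodes $V_{i+1},\dots,V_N$; since its numerical type has $\tilde{\bf n}$ supported on indices $\ge i+1>0$ where $\theta_j>0$, we get $\Theta(\tilde X)=\boldsymbol\theta\cdot\tilde{\bf n}>0$, violating semistability. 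Similarly, if the ADHM datum $\mathcal A$ is not stable, the standard ADHM argument (the failure of stability produces a proper $B_1^0,B_2^0$-invariant subspace of $V_0$ containing $\im I$, or a nonzero $B_a^0$-invariant subspace of $\ker J$) yields a subrepresentation whose type, concentrated at $V_0$, is again detected by the sign of the relevant $\theta$-pairing. Here I must be careful to propagate the invariant subspace consistently across all nodes using the relations so that what I build is genuinely a subrepresentation of the quiver, not just an invariant subspace at one node.

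The reverse direction $(iii)\Rightarrow(i)$ is the main obstacle. Assuming S1 and S2, I must show every proper nonzero subrepresentation $\tilde X$ has $\Theta(\tilde X)<0$. The two cases split on whether $\tilde X$ contains the framing. If $\tilde r=0$, injectivity of the $F^i$ forces the tail components of $\tilde{\bf n}$ to be controlled, but the real leverage comes from S2: stability of $\mathcal A$ means the only $B_a^0$-invariant subspace of $V_0$ containing $\im I$ is $V_0$ itself, which constrains where an unframed subrepresentation can live and forces $\tilde n_0$ strictly below $n_0$, making $\boldsymbol\theta\cdot\tilde{\bf n}$ come out with the correct strict sign. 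If $\tilde r=r$ (the framing is contained), I use that the huge negative weight $\theta_0$ attached to $n_0$ swamps the bounded positive contributions from the tail, guaranteeing strict negativity as long as $\tilde{\bf n}\ne{\bf n}$; the genuinely delicate point is ruling out $\tilde X=X$ by showing that a framed subrepresentation satisfying the same relations must be proper unless it is everything, which again rests on the injectivity S1 forcing the tail dimensions to be determined. I expect the bookkeeping of exactly which $\theta$-inequality is strict versus non-strict, and verifying that the constructed subobjects respect \emph{all} the nested ADHM relations simultaneously, to be the most error-prone part; the conceptual content, by contrast, is a direct generalization of the $N=1$ computation in \cite{bruzzo2011,flach_jardim}, with the tail handled inductively node by node.
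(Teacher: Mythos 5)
Your overall architecture --- the cycle $(i)\Rightarrow(ii)\Rightarrow(iii)\Rightarrow(i)$, destabilizing subrepresentations built from $\ker F^i$ and from an invariant subspace witnessing the failure of \textbf{S2}, and the case split $\tilde r\in\{0,r\}$ for the converse --- is exactly the paper's. However, in the direction $(iii)\Rightarrow(i)$ your sign analysis runs backwards and the roles of \textbf{S1} and \textbf{S2} are swapped between the two cases, so the plan as written does not close. Since $\Theta(X)=0$, a framed subrepresentation ($\tilde r=r$) has $\Theta(\tilde X)=\boldsymbol\theta\cdot(\tilde{\bold n}-\bold n)$, and the term $\theta_0(\tilde n_0-n_0)$ is \emph{nonnegative} (negative times negative) and large precisely when $\tilde n_0<n_0$: the ``huge negative weight $\theta_0$'' does not swamp the tail in your favour, it threatens to make $\Theta(\tilde X)>0$. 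What saves this case is \textbf{S2}: because $\tilde W=W$, the subspace $\tilde V_0$ is $B^0_{1,2}$-invariant and contains $\im I$, so stability of $\mathcal A$ forces $\tilde V_0=V_0$; only then does $\Theta(\tilde X)=\sum_{k>0}\theta_k(\tilde n_k-n_k)<0$ follow from $\theta_{k>0}>0$ and properness. Conversely, for $\tilde r=0$ the condition \textbf{S2} gives you nothing, since an unframed subrepresentation only requires $J(\tilde V_0)\subseteq 0$, not $\im I\subseteq\tilde V_0$; and ``forcing $\tilde n_0$ strictly below $n_0$'' would make $\theta_0\tilde n_0$ \emph{less} negative, which hurts you. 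The correct mechanism there is \textbf{S1}: injectivity of the $F^i$ shows that $\tilde n_0=0$ forces $\tilde X=0$, hence $\tilde n_0\ge1$, and then $\Theta(\tilde X)=\theta_0\tilde n_0+\sum_{k>0}\theta_k\tilde n_k\le\theta_0+\sum_{k>0}\theta_k n_k<0$ by the hypothesis on $\theta_0$.

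A secondary inaccuracy occurs in $(ii)\Rightarrow(iii)$: when \textbf{S2} fails, the destabilizer is not an unframed subrepresentation ``concentrated at $V_0$'' but the framed one $(W,S,V_1,\dots,V_N)$ with $S\subsetneq V_0$ the proper invariant subspace containing $\im I$, whose $\Theta$-value is $(\dim S-n_0)\theta_0>0$ precisely because $\theta_0<0$. The costability alternative you list (a nonzero invariant subspace of $\ker J$) is not part of the stability notion \textbf{S2} used here and would in any case produce a subobject of the wrong type for this sign argument.
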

\begin{proof}
$\boldsymbol{(i)\Rightarrow (ii)}$ This is obvious, as a $\Theta-$stable representation is also $\Theta-$semistable.

$\boldsymbol{(ii)\Rightarrow (iii)}$ Let us first take a $\Theta-$semistable representation $X$ having at least one of the $F^i$ not injective. Without loss of generality let $F^k$ be the only one to be such a map. Then, if $v_k\in\ker F^k\Rightarrow B_2^{k+1}v_k\in\ker F^k$, due to the nested ADHM equations, and $B_2^{k+1}(\ker F^k)\subset\ker F^k$ (the same is obviously true for $B_1^{k+1}$. Now
$$\tilde X=(0,\dots,0,\ker F^k,0,\dots,F^k,B_1^{k+1}|_{\ker F^k},B_2^{k+1}|_{\ker F^k},0,\dots,0)$$
is a subrepresentation of $X$ of numerical type $(0,\dots,0,\dim\ker F^k,0,\dots,0)$. Thus 
$$\tilde{\bold n}\cdot\boldsymbol\theta+\tilde r\theta_\infty=\theta_k\dim\ker F^k>0,$$
which contradicts the hypothesis of $X$ being $\Theta-$semistable.

If instead we take $X$ to be $\Theta-$semistable and suppose {\bf S2} to be false, then $\exists 0\subset S\subset V_1$ s.t. $B_1^0(S)$, $B_2^0(S)$, $\im(I)\subseteq S$. In this case 
$$\tilde X=(W,S,V_1,\dots,B_1^0|_S,B_2^0|_S,I,J|_S,\dots)$$
is a subrepresentation of $X$ of numerical type $(r,\dim S,n_1,\dots)$ but, since $\bold n\cdot\boldsymbol\theta+r\theta_\infty=0$ having $\theta_{i>0}>0$ and $\theta_0-n_1\theta_1-\cdots<0$, we have 
$$\dim S\theta_0+n_1\theta_1+\cdots+r\theta_\infty=(\dim S-n_0)\theta_0>0,$$
which again leads to a contradiction.

$\boldsymbol{(iii)\Rightarrow(i)}$ If we take a proper subrepresentation $\tilde X$ of numerical type $(\tilde r,\tilde{\bold n})$, we just need to check the cases $\tilde r=0$ and $\tilde r=r$.
\begin{itemize}
\item If $\tilde r=r$ then $\tilde W=W$, which in turn implies that $I\neq 0$, otherwise the ADHM datum $(B_1^0,B_2^0,I,J)$ would not be stable. Since $\tilde X$ is proper the following diagram commutes
\begin{equation}
\begin{tikzcd}
W\arrow[r,"I"] & V_0\\
W\arrow[u,"\mathbbm 1_W"]\arrow[r,"\tilde I"] & \tilde V_0\arrow[hookrightarrow]{u}{i}
\end{tikzcd}\Rightarrow i\circ\tilde I=I\circ\mathbbm 1_W
\end{equation}
so that $\tilde n_0>0$, otherwise we would have $I=0$. Moreover the following diagram also commutes (and so does the analogous one for $B_2^0$)
\begin{equation}
\begin{tikzcd}
V_0\arrow[r,"B_1^0"] & V_0\\
\tilde V_0\arrow[hookrightarrow]{u}{i}\arrow[r,"\tilde B_1^0"] & \tilde V_0\arrow[hookrightarrow]{u}{i}
\end{tikzcd}\Rightarrow i\circ\tilde B_1^0=B_1^0\circ i\Rightarrow B_1^0(\tilde V_0)\subset\tilde V_0,
\end{equation}
leading to a contradiction with the stability of $(W,V_0,B_1^0,B_2^0,I,J)$. Since we are interested in proper subrepresentations of $X$, at least one $\tilde n_{i>0}$ is not zero, and at least one of these non-zero $\tilde n_k<n_k$, so that $\boldsymbol\theta\cdot\tilde{\bold n}+\theta_\infty r<0$, and $X$ is stable.
\item Let now $\tilde r=0$. Since we are interested in proper subrepresentations we must choose $\tilde n_0>0$, otherwise $\tilde V_{k>0}=0$ by virtue of the injectivity of $F_k$. In the same way as in the previous case the only option is $\tilde n_0=n_0$. Following the same steps we previously carried out $\boldsymbol\theta\cdot\tilde{\bold n}=\sum_{k>0}\theta_k(\tilde n_k-n_k)-\theta_\infty r<0$.
\end{itemize}
\end{proof}
\begin{corollary}
If $X$ is a stable representation of the nested instantons quiver, $G^k=0,\ \forall k$.
\end{corollary}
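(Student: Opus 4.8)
The plan is to derive this directly from the stability characterization already established in Proposition \ref{theta_stability}, together with the single nested ADHM relation $F^iG^i=0$. The only input I expect to need from stability is condition \textbf{S1}: for a stable representation $X$, every $F^i$ is injective for $i\ge 1$. Everything else is elementary linear algebra, so I would not invoke the ADHM datum or condition \textbf{S2} at all.

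First I would recall that $G^i$ and $F^i$ are the two morphisms connecting adjacent nodes of the tail in opposite directions, so that the composite $F^iG^i$ is an endomorphism of the appropriate $V_\bullet$, and that among the nested ADHM equations one has precisely $F^iG^i=0$ for every $i$. Then, fixing $i\ge 1$ and using \textbf{S1}, injectivity of $F^i$ gives $\ker F^i=0$, whence $F^iG^i=0$ forces $\im G^i\subseteq\ker F^i=0$, i.e.\ $G^i=0$. Since \textbf{S1} supplies injectivity of $F^i$ for all $i\ge 1$, and these indices exhaust the $G$-maps appearing in \eqref{quiv_tail}, I would conclude $G^k=0$ for all $k$.

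I do not anticipate any genuine obstacle: the argument is the one-line observation that a map annihilated on the left by an injective map must vanish. The only point requiring care is bookkeeping the index conventions, so that each relation $F^iG^i=0$ is paired with the correct injective $F^i$ furnished by \textbf{S1} and the domains and codomains match; once this is checked, the statement follows immediately for every $k$.
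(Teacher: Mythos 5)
Your proposal is correct and is essentially identical to the paper's own proof: the paper also deduces $G^k=0$ in one line from the relation $F^kG^k=0$ together with the injectivity of $F^k$ guaranteed by condition \textbf{S1} of Proposition \ref{theta_stability}. Nothing further is needed.
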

\begin{proof}
By the previous proposition, due to the injectivity of $F^k$, $F^kG^k=0\Rightarrow G^k=0$.
\end{proof}

\subsection{The nested instantons moduli space}\label{sec:moduli_space}
We want now to discuss the construction of the moduli space of stable representations of the quiver \eqref{quiv_tail}, and its connection to GIT theory and stability. First of all we define the space of the nested ADHM data to be the space $\mathbb X$ we defined previously, and an element $X\in\mathbb X$ is called an nested ADHM datum. On $\mathbb X$ we have a natural action of $\mathcal G=GL(V_0)\times\cdots\times GL(V_{N})$ defined by
\begin{equation}
\begin{split}
\Psi:(g_0,g_1,\dots,g_{N},X)\longmapsto(g_0B_1^0g_0^{-1},g_0B_2^0g_0^{-1},g_0I,Jg_0^{-1},\\
g_1B_1^1g_1^{-1},g_1B_2^1g_1^{-1},g_0F^1g_1^{-1},g_1G^1g_0^{-1},\\
\dots\\
g_{N}B_1^{N}g_{N}^{-1},g_{N}B_2^{N}g_{N}^{-1},g_{N-1}F^{N}g_{N}^{-1},g_{N}G^{N}g_{N-1}^{-1})
\end{split}
\end{equation}
This action of $\mathcal G$ on $\mathbb X$ is free on the stable points of $\mathbb X$. In fact if $\boldsymbol g\in\mathbb G$ is such that $\boldsymbol g\cdot X=X$, $\forall X\in\mathbb X$, we claim that $\boldsymbol g=(\mathbbm 1_{V_0},\dots,\mathbbm 1_{V_{N}})$. In order to see this, let $S=\ker(g_0-\mathbbm 1_{V_0})$. Since $\boldsymbol g\cdot X=X$ it follows that $g_0I=I$, which means $\im I\subset S$. Moreover $g_0B_1^0=B_1^0g_0$ and $g_0B_2^0=B_2^0g_0$, but if $v\in S\Rightarrow (g_0-\mathbbm 1_{V_0})v=0\Rightarrow g_0v=v$, thus implying that $B_1^0(S),B_2^0(S)\subset S$. The stability of $(W,V_0,B_1^0,B_2^0,I,J)$ then force $S=V_0$. Finally since, $g_0=\mathbbm 1_{V_0}$, $F^1(\mathbbm 1_{V_1}-g_1^{-1})=0\Rightarrow g_1=\mathbbm 1_{V_1}$ by the injectivity of $F^1$. By using this procedure then one can prove by iteration that $g_k=\mathbbm 1_{V_k}$, $\forall k$, thus $\boldsymbol g\cdot X=X\ \forall X\in\mathbb X\Leftrightarrow\boldsymbol g=\mathbbm 1$. This proves that the action of $\mathcal G$ is free on the stable points of $\mathbb X$, and it is easy to prove that it preserves $\mathbb X_0$, which denotes the space of nested ADHM data satisfying the relations of quiver \eqref{quiv_tail}.

Now if $\chi:\mathcal G\to\mathbb C^*$ is an algebraic character for the algebraic reductive group $\mathcal G$, we can produce the moduli space of $\chi-$semistable orbits following a construction due to \cite{king}, $\mathcal N^{ss}_\chi(r,\bold n)$, which is a quasi-projective scheme over $\mathbb C$ and is defined as
\begin{displaymath}
\mathcal N_\chi^{ss}(r,\bold n)=\mathbb X_0\git_\chi\mathcal G=\Proj\left(\bigoplus_{n\ge 0}A(\mathbb X_0(r,\bold n))^{\mathcal G,\chi^n}\right)
\end{displaymath}
with 
$$A(\mathbb X_0(r,\bold n))^{\mathcal G,\chi^n}=\{f\in A(\mathbb X_0(r,\bold n))|f(\boldsymbol h\cdot X)=\chi(\boldsymbol h)^nf(X),\forall\boldsymbol h\in\mathcal G\}.$$

The scheme $\mathcal N_\chi^{ss}(r,\bold n)$ contains an open subscheme $\mathcal N^{s}_\chi(r,\bold n)\subset\mathcal N^{ss}_\chi(r,\bold n)$ encoding $\chi-$stable orbits. It turns out that also in this framed case there is a relation between $\chi-$stability and $\Theta-$stability, as it was shown in \cite{king} in the non framed setting.
\begin{proposition}\label{prop:equivalence_stabilities}
Let $\Theta=(\theta_0,\theta_1,\dots,\theta_{N})\in\mathbb Z^{N+1}$ and define $\chi_\Theta:\mathcal G\to\mathbb C^*$ the character
\begin{equation}
\chi_\Theta(\boldsymbol h)=\det(h_0)^{-\theta_0}\cdots\det(h_{N})^{-\theta_{N}}.
\end{equation}
A representation $X$ of the nested ADHM quiver \eqref{quiv_tail} is $\chi_\Theta-$(semi)stable iff it is $\Theta-$(semi)stable.
\end{proposition}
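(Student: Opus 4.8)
The plan is to prove both implications simultaneously through the Hilbert--Mumford numerical criterion, exactly in the spirit of King's original correspondence between GIT stability and $\Theta$-stability, the only new ingredient being the bookkeeping forced by the framing. Since $\mathcal N^{ss}_{\chi_\Theta}(r,\bold n)=\mathbb X_0\git_{\chi_\Theta}\mathcal G$ is built from the linearization of the $\mathcal G$-action by the character $\chi_\Theta$, a point $X\in\mathbb X_0$ is $\chi_\Theta$-(semi)stable precisely when, for every nontrivial one-parameter subgroup $\lambda\colon\mathbb C^*\to\mathcal G$ with $\lim_{t\to 0}\lambda(t)\cdot X$ existing in $\mathbb X_0$, the Mumford weight $\langle\chi_\Theta,\lambda\rangle$ is $\ge 0$ (respectively $>0$), for the sign convention encoded by the exponents $-\theta_i$ in $\chi_\Theta$. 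The closedness and $\mathcal G$-invariance of $\mathbb X_0$ guarantee the limit automatically satisfies the nested ADHM relations, so no extra care is needed there.

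First I would set up the dictionary between one-parameter subgroups and filtrations. A $\lambda$ is the same as a $\mathbb Z$-grading $V_i=\bigoplus_{m}V_{i,m}$ on each vertex space, with associated descending filtration $V_i^{(\ge p)}=\bigoplus_{m\ge p}V_{i,m}$; crucially $W$ carries no $\mathcal G$-factor and must be regarded as sitting entirely in weight $0$. Tracking the transformation $\Psi$, the existence of $\lim_{t\to0}\lambda(t)\cdot X$ is equivalent to the requirement that every $B^i_j$, $F^i$ and $G^i$ preserve the filtration, together with the two framing conditions $\im I\subseteq V_0^{(\ge 0)}$ and $J|_{V_0^{(\ge 1)}}=0$. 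This is exactly the statement that, for $p\ge 1$, $(0,V^{(\ge p)})$ is a subrepresentation of numerical type $(0,\tilde{\bold n})$, while for $p\le 0$, $(W,V^{(\ge p)})$ is a subrepresentation of numerical type $(r,\tilde{\bold n})$ --- the two families appearing in the definition of $\Theta$-stability.

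Next I would compute the Mumford weight. Writing $d_{i,m}=\dim V_{i,m}$, one has $\langle\chi_\Theta,\lambda\rangle=-\sum_i\theta_i\sum_m m\,d_{i,m}$, and Abel summation gives
\[
\sum_m m\,d_{i,m}=\sum_{p\ge 1}\dim V_i^{(\ge p)}-\sum_{p\le 0}\dim\bigl(V_i/V_i^{(\ge p)}\bigr).
\]
Hence $\langle\chi_\Theta,\lambda\rangle=-\sum_{p\ge1}\boldsymbol\theta\cdot\tilde{\bold n}(V^{(\ge p)})+\sum_{p\le0}\boldsymbol\theta\cdot\bigl(\bold n-\tilde{\bold n}(V^{(\ge p)})\bigr)$. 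For $p\ge1$ each summand is $\boldsymbol\theta\cdot\tilde{\bold n}$ of a framing-free subrepresentation, and for $p\le0$, using the normalization $\boldsymbol\theta\cdot\bold n+r\theta_\infty=0$, each summand equals $-(\boldsymbol\theta\cdot\tilde{\bold n}+r\theta_\infty)$ for a subrepresentation containing the full framing. Thus $\Theta$-semistability makes every summand $\ge0$ and forces $\langle\chi_\Theta,\lambda\rangle\ge0$; the stable case is identical with strict inequalities. For the converse, given a test subrepresentation I would produce a two-step $\lambda$: grading $\tilde V$ at weight $1$ (complement at weight $0$) when $\tilde r=0$, and $\tilde V$ at weight $0$ (complement at weight $-1$) when $\tilde r=r$. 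The framing conditions above are met precisely because $\tilde W=0$ forces $J(\tilde V_0)=0$ in the first case and $\tilde W=W$ forces $\im I\subseteq\tilde V_0$ in the second; reading off $\langle\chi_\Theta,\lambda\rangle$ then returns exactly the two inequalities of the definition.

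The main obstacle I anticipate is bookkeeping around the framing rather than anything deep: one must correctly fix the sign convention in the Mumford weight, keep $W$ pinned at weight $0$, and verify that the ``limit exists'' conditions translate into the two distinct families of subrepresentations (framing-free versus full-framing). The normalization $\boldsymbol\theta\cdot\bold n+r\theta_\infty=0$ is what converts a quotient dimension into the framed functional $\boldsymbol\theta\cdot\tilde{\bold n}+r\theta_\infty$, and getting this substitution right --- together with checking that nontrivial $\lambda$ correspond to proper nonzero subrepresentations so that the strict inequalities match in the stable case --- is the only genuinely delicate point.
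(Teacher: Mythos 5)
Your proposal is correct and follows essentially the same route as the paper: both directions go through King's Hilbert--Mumford-type criterion, with one-parameter subgroups converted into filtrations by subrepresentations for the forward implication and explicit two-step test subgroups (weights $(1,0)$ for $\tilde r=0$, weights $(0,-1)$ for $\tilde r=r$) for the converse. Your Abel-summation computation of the Mumford weight and the explicit framing conditions ($\im I\subseteq V_0^{(\ge 0)}$, $J|_{V_0^{(\ge 1)}}=0$) actually spell out details that the paper's proof leaves implicit in its citation of King.
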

Since the proof for proposition \ref{prop:equivalence_stabilities} deeply relies on some known results about equivalent characterizations of $\chi-$stability, we will first recall them. In full generality, let $V$ be a vector space over $\mathbb C$ equipped with the action of a connected subgroup $G$ of $U(V)$, whose complexification is denoted by $G^{\mathbb C}$. Then if $\chi:G\to U(1)$ is a character of $G$, we can extend it to form its complexification $\chi:G^{\mathbb C}\to\mathbb C^*$. We then form the trivial line bundle $V\times\mathbb C$, which carries an action of $G^{\mathbb C}$ via $\chi$:
\begin{displaymath}
g\cdot (x,z)=(g\cdot x,\chi(g)^{-1}z),\qquad g\in G,(x,z)\in V\times\mathbb C.
\end{displaymath}
\begin{definition}
An element $x\in V$ is
\begin{enumerate}
\item $\chi-$semistable if there exists a polynomial $f\in A(V)^{G^{\mathbb C},\chi^n}$, with $n\ge 1$ such that $f(x)\neq 0$;
\item $\chi-$stable if it satisfies the previous condition and if
\begin{enumerate}
\item $\dim (G^{\mathbb C}\cdot x)=\dim(G^{\mathbb C}/\Delta)$, where $\Delta\subseteq G^{\mathbb C}$ is the subgroup of $G^{\mathbb C}$ acting trivially on $V$;
\item the action of $G^{\mathbb C}$ on $\{x\in V:f(x)\neq 0\}$ is closed.
\end{enumerate}
\end{enumerate}
\end{definition}
Given the previous definition, the next lemma due to King \cite{king} gives an alternative characterization of $\chi-$(semi)stable points under the $G^{\mathbb C}-$action.
\begin{lemma}[Lemma 2.2 and Proposition 2.5 in \cite{king}]
Given the character $\chi:G^{\mathbb C}\to\mathbb C^*$ for the action of $G^{\mathbb C}$ on the vector space $V$, and the lift of this action to the trivial line bundle $V\times\mathbb C$, a point $x\in V$ is
\begin{enumerate}
\item $\chi-$semistable iff $\overline{G^{\mathbb C}\cdot (x,z)}\cap (V\times\{0\})=\emptyset$, for any $z\neq 0$;
\item $\chi-$stable iff $G^{\mathbb C}\cdot(x,z)$ is closed and the stabilizer of $(x,z)$ contains $\Delta$ with finite index.
\end{enumerate}
Equivalently, a point $x\in V$ is
\begin{enumerate}
\item $\chi-$semistable iff $\chi(\Delta)=\{1\}$ and $\chi(\lambda)\ge 0$ for any $1-$parameter subgroup $\lambda(t)\subseteq G^{\mathbb C}$ for which $\lim_{t\to 0}\lambda(t)\cdot x$ exists;
\item $\chi-$stable iff the only $\lambda(t)$ such that $\lim_{t\to 0}\lambda(t)\cdot x$ exists and $\chi(\lambda)=0$ are in $\Delta$.
\end{enumerate}
\end{lemma}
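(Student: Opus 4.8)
The plan is to reduce both claims to ordinary (affine) geometric invariant theory on the total space of the linearized line bundle $L=V\times\mathbb C$, exploiting the standard dictionary between $\chi^n$-semiinvariants on $V$ and genuine $G^{\mathbb C}$-invariants on $L$. Concretely, to a semiinvariant $f\in A(V)^{G^{\mathbb C},\chi^n}$ I associate the function $\tilde f(x,z)=f(x)\,z^n$ on $L$; the linearized action $g\cdot(x,z)=(g\cdot x,\chi(g)^{-1}z)$ makes $\tilde f$ genuinely $G^{\mathbb C}$-invariant, and conversely every $G^{\mathbb C}$-invariant polynomial on $L$ decomposes into its homogeneous pieces in $z$, each of which is of this form. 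Thus $A(L)^{G^{\mathbb C}}=\bigoplus_{n\ge 0}A(V)^{G^{\mathbb C},\chi^n}\,z^n$, and the existence of a nonvanishing semiinvariant at $x$ becomes a statement about invariants on $L$.

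With this dictionary in hand, part (1) follows from the basic separation property of reductive quotients. If $x$ is $\chi$-semistable, a semiinvariant $f$ with $f(x)\neq 0$ and $n\ge 1$ yields an invariant $\tilde f$ with $\tilde f(x,z)=f(x)z^n\neq 0$ for $z\neq 0$, while $\tilde f\equiv 0$ on the zero section $V\times\{0\}$; since $\tilde f$ is constant on orbit closures, $\overline{G^{\mathbb C}\cdot(x,z)}$ cannot meet $V\times\{0\}$. Conversely, if $\overline{G^{\mathbb C}\cdot(x,z)}$ and the closed invariant set $V\times\{0\}$ are disjoint, reductivity of $G^{\mathbb C}$ provides an invariant $\tilde F$ with $\tilde F|_{V\times\{0\}}=0$ and $\tilde F(x,z)\neq 0$; its $z^0$-component vanishes identically (being the restriction to the zero section), so some positive-degree component $f_n$ with $f_n(x)\neq 0$ supplies the required semiinvariant. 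For part (2), I would compute the stabilizer of $(x,z)$ for $z\neq 0$, which is exactly $\{g\in\mathrm{Stab}_{G^{\mathbb C}}(x):\chi(g)=1\}$, and relate closedness of $G^{\mathbb C}\cdot(x,z)$ in $L$ to the two conditions defining $\chi$-stability: maximality of the orbit dimension translates into the stabilizer being finite modulo $\Delta$, and the requirement that orbits be closed on $\{f\neq 0\}$ is precisely closedness of $G^{\mathbb C}\cdot(x,z)$.

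To obtain the equivalent numerical formulation I would invoke the Hilbert--Mumford criterion on $L$. A one-parameter subgroup $\lambda:\mathbb C^*\to G^{\mathbb C}$ acts by $\lambda(t)\cdot(x,z)=(\lambda(t)\cdot x,\,t^{-\chi(\lambda)}z)$, where $\chi(\lambda)\in\mathbb Z$ is defined by $\chi(\lambda(t))=t^{\chi(\lambda)}$. Hence $\lim_{t\to 0}\lambda(t)\cdot(x,z)$ lands on the zero section (for $z\neq 0$) exactly when $\lim_{t\to 0}\lambda(t)\cdot x$ exists and $\chi(\lambda)<0$. Since $G^{\mathbb C}$ is reductive, the Hilbert--Mumford criterion ensures that an orbit closure meets the closed invariant set $V\times\{0\}$ if and only if it does so along some one-parameter subgroup; combined with part (1) this yields the stated condition $\chi(\lambda)\ge 0$ for every $\lambda$ admitting a limit. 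The necessity of $\chi(\Delta)=\{1\}$ is immediate: if some $\delta\in\Delta$ had $\chi(\delta)^n\neq 1$, then $f(x)=f(\delta\cdot x)=\chi(\delta)^n f(x)$ forces every $\chi^n$-semiinvariant to vanish identically, so nothing could be semistable. The stability statement follows by the same mechanism applied to $\chi(\lambda)=0$: such $\lambda$ fix the fiber coordinate and therefore degenerate the orbit inside $L$ unless they act trivially, i.e.\ lie in $\Delta$.

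I expect the main obstacle to be the Hilbert--Mumford step itself. For a general reductive $G^{\mathbb C}$ over $\mathbb C$, the assertion that every intersection of an orbit closure with a closed invariant subset is realized by a one-parameter subgroup is a substantial theorem (Mumford, Kempf), not a formal consequence of reductivity, and the stability half requires the sharper version detecting closed orbits and finite stabilizers. The bookkeeping relating the abstract conditions (a)--(b) to closedness of $G^{\mathbb C}\cdot(x,z)$ and to the role of $\Delta$ is routine once the dictionary is in place, but care is needed to treat the finite part of $\Delta$ and the normalization $\chi(\Delta)=\{1\}$ consistently.
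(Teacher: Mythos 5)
The paper contains no proof of this lemma at all: it is quoted verbatim from King (Lemma 2.2 and Proposition 2.5 of that paper) precisely so that it can be used as a black box in the proof of Proposition \ref{prop:equivalence_stabilities}, so there is no internal argument to compare yours against. Your reconstruction is, in substance, King's own: the dictionary $A(V\times\mathbb C)^{G^{\mathbb C}}=\bigoplus_{n\ge 0}A(V)^{G^{\mathbb C},\chi^n}z^n$, separation of the disjoint closed invariant sets $\overline{G^{\mathbb C}\cdot(x,z)}$ and $V\times\{0\}$ by an invariant (reductivity), the stabilizer computation $\mathrm{Stab}(x,z)=\mathrm{Stab}(x)\cap\ker\chi$, and the Hilbert--Mumford/Kempf one-parameter-subgroup theorem applied to the lifted action; this is correct at sketch level, and you rightly identify the Kempf step as the one genuinely nontrivial input to be cited rather than reproved. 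The only compressed points --- promoting $\chi(\delta)^n=1$ to $\chi(\Delta)=\{1\}$ (which uses that $\Delta$ is connected in the quiver setting), and the bookkeeping showing that closedness of $G^{\mathbb C}\cdot(x,z)$ excludes a one-parameter subgroup of $\mathrm{Stab}(x)$ on which $\chi$ is nontrivial, so that finiteness of $\mathrm{Stab}(x,z)/\Delta$ really is equivalent to conditions (a) and (b) of the definition --- are the routine parts you already flag as such.
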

With these notations, if $V^{\rm{ss}}(\chi)$ denotes the set of $\chi-$semistable points of $V$, $V\git_\chi G^{\mathbb C}$ can be identified with $V^{\rm{ss}}(\chi)/\sim$, where $x\sim y$ in $V^{\rm{ss}}(\chi)$ iff $\overline{G^{\mathbb C}\cdot x}\cap\overline{G^{\mathbb C}\cdot y}\neq\emptyset$ in $V^{\rm{ss}}(\chi)$.

\begin{proof}[Proof of proposition \ref{prop:equivalence_stabilities}]
Take a $\theta-$semistable representation $X\in\mathbb X$ and assume it doesn't satisfy $\chi_\theta-$semistability. Then there exists a $1-$parameter subgroup $\lambda(t)$ of $\mathcal G$ such that $\lim_{t\to 0}\lambda(t)\cdot X$ exists and $\chi(\lambda)<0$. However each such $1-$parameter subgroup $\lambda$ determines a filtration $\cdots\supseteq X_n\supseteq X_{n+1}\supseteq\cdots$ of subrepresentations of $X$, \cite{king}, and
\begin{equation}
\chi_\theta(\lambda)=-\sum_{n\in\mathbb Z}\theta(X_n)\ge 0,
\end{equation}
thus proving one side of the proposition, as the part concerning stability is obvious from the fact that trivial subrepresentations of $X$ correspond to subgroups in $\Delta$.

Conversely, if $X$ is a $\chi_{\theta}-$semistable representation, we want to show that it is also a $\theta-$semistable one. We only need to consider two cases, corresponding to subrepresentations $\tilde X$ of $X$ with $\tilde r=r$ or $\tilde r=0$. Each vector space in $X$, say $V_i$ will have then a direct sum decomposition $V_i=\widetilde V_i\oplus\widehat V_i$. We will then take a $1-$parameter subgroup $\lambda(t)$ such that
\begin{equation}
\lambda_i(t)=
\begin{bmatrix}
t\mathbbm 1_{\widetilde V_i} & 0\\
0 & \mathbbm 1_{\widehat V_i}
\end{bmatrix}.
\end{equation}
Then one can easily compute
\begin{equation}
\begin{split}
\chi_{\theta}(\lambda(t))\cdot z&=\left[\det(\lambda_0(t))^{-\theta_0}\cdots\det(\lambda_N(t))^{-\theta_N}\right]^{-1}\cdot z\\
&=t^{\tilde{\bold n}\cdot\boldsymbol\theta}z
\end{split}
\end{equation}
It is then a matter of a simple computation to verify that, if $X$ wasn't $\theta-$semistable, then one would have had $\lim_{t\to 0}\lambda(t)\cdot X\in\mathbb X\times\{0\}$, thus contradicting the $\chi_\theta-$semistability. A completely analogous computation can be carried over when $\tilde r=r$, taking
\begin{equation}
\lambda_0(t)=
\begin{bmatrix}
\mathbbm 1_{\widetilde V_0} & 0\\
0 & t^{-1}\mathbbm 1_{\widehat V_0}
\end{bmatrix},\qquad
\lambda_i(t)=
\begin{bmatrix}
\mathbbm 1_{\widetilde V_i} & 0\\
0 & t^{-1}\mathbbm 1_{\widehat V_i}
\end{bmatrix},\ i>0,
\end{equation}
and since $(\tilde{\bold n}-\bold n)\cdot\boldsymbol\theta>0$ if $X$ is supposed not to be $\theta-$semistable, this would still lead to a contradiction.

Finally, if $X$ was to be $\chi_\theta-$stable but not $\theta-$stable, the $1-$parameter subgroups previously described would have stabilized the pair $(X,z)$, $z\neq 0$, in the two different cases $\tilde r=0$ and $\tilde r=r$ respectively, thus again giving rise to a contradiction.

\end{proof}
\begin{corollary}\label{corollary_stability}
Given a representation of the nested instantons quiver \eqref{quiv_tail} of numerical type $(r,\bold n)$, there exists a chamber in $\mathbb Q^{N+1}\ni(\boldsymbol\theta,\theta_\infty)=\Theta$ in which $\theta_{i>0}>0$ and $\theta_0+n_1\theta_1+\cdots+n_{s-1}\theta_{s-1}<0$ such that the following are equivalent:
\begin{enumerate}
\item X is $\Theta-$semistable;
\item X is $\Theta-$stable;
\item X is $\chi_\Theta-$semistable;
\item X is $\chi_\Theta-$stable;
\item X satisfies {\bf S1} and {\bf S2} in proposition \ref{theta_stability}.
\end{enumerate}
\end{corollary}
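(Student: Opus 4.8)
The plan is to assemble the statement from the two results already in hand, since the corollary is nothing more than the conjunction of Proposition~\ref{theta_stability} and Proposition~\ref{prop:equivalence_stabilities}, read off in a single distinguished chamber. First I would observe that the chamber prescribed in the corollary — namely $\theta_{i}>0$ for all $i>0$ together with $\theta_0+n_1\theta_1+\cdots+n_{N}\theta_{N}<0$ — is manifestly nonempty (take the $\theta_{i>0}$ to be any positive rationals and $\theta_0$ sufficiently negative), and that it is precisely the chamber in which Proposition~\ref{theta_stability} was established. Inside it, Proposition~\ref{theta_stability} already delivers the equivalence of items (1), (2) and (5): $\Theta$-stability, $\Theta$-semistability and the pair of conditions \textbf{S1}--\textbf{S2} are there pairwise equivalent.

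Next I would invoke Proposition~\ref{prop:equivalence_stabilities}, which identifies $\chi_\Theta$-(semi)stability with $\Theta$-(semi)stability through the character $\chi_\Theta(\boldsymbol h)=\det(h_0)^{-\theta_0}\cdots\det(h_N)^{-\theta_N}$; this yields (1)$\Leftrightarrow$(3) and (2)$\Leftrightarrow$(4) at once. The one point needing a small argument is that Proposition~\ref{prop:equivalence_stabilities} is phrased for an integral weight $\Theta\in\mathbb Z^{N+1}$, while the corollary allows $\Theta\in\mathbb Q^{N+1}$. Here I would note that both notions of (semi)stability are invariant under replacing $\Theta$ by a positive integer multiple $m\Theta$: on the GIT side this is the passage from the linearisation $\chi$ to $\chi^{m}$, which leaves the associated $\Proj$, and hence the (semi)stable locus, unchanged; on the combinatorial side $\Theta(\tilde X)$ and $m\Theta(\tilde X)$ have the same sign. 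Since the chamber inequalities are themselves scale-invariant, one may clear denominators to assume $\Theta$ integral without leaving the chamber.

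Finally I would chain the equivalences: (5)$\Leftrightarrow$(1)$\Leftrightarrow$(2) from Proposition~\ref{theta_stability}, and (1)$\Leftrightarrow$(3), (2)$\Leftrightarrow$(4) from Proposition~\ref{prop:equivalence_stabilities}, so that all five conditions are mutually equivalent. The thing to be careful about — more a matter of bookkeeping than a genuine obstacle — is the sign and indexing conventions across the two propositions: one must check that the constraint $\Theta(X)=\bold n\cdot\boldsymbol\theta+r\theta_\infty=0$ defining $\Theta$-stability is compatible with the purely $\mathcal G$-character $\chi_\Theta$, which sees only $\theta_0,\dots,\theta_N$ and not $\theta_\infty$. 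The point is that on a fixed numerical type $(r,\bold n)$ the value $\theta_\infty=-\bold n\cdot\boldsymbol\theta/r$ is determined by $\boldsymbol\theta$, so the two presentations of the stability parameter carry the same information. Granting this compatibility, no further computation is required and the corollary follows formally.
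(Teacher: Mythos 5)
Your proposal is correct and follows essentially the same route as the paper: the corollary is stated there without a separate proof precisely because it is the conjunction of Proposition~\ref{theta_stability} (equivalence of $\Theta$-stability, $\Theta$-semistability and \textbf{S1}--\textbf{S2} in the given chamber) with Proposition~\ref{prop:equivalence_stabilities} (equivalence of $\chi_\Theta$- and $\Theta$-(semi)stability). Your additional remarks on clearing denominators to pass from $\mathbb Q^{N+1}$ to $\mathbb Z^{N+1}$ and on $\theta_\infty$ being determined by the constraint $\Theta(X)=0$ are correct bookkeeping that the paper leaves implicit.
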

Because of the previous corollary, in the stability chamber defined by proposition \ref{theta_stability} all notions of stability are actually the same, so that a representation satisfying anyone of the conditions in corollary \ref{corollary_stability} will be called stable, and the corresponding $\mathcal N^{ss}_{\chi_\Theta}(r,\bold n)=\mathcal N(r,\bold n)\simeq\mathcal N_{r,[r^1],n,\mu}$ (with the notations of \cite{Bonelli:2019lal}) will be addressed to as the moduli space of stable representations of \eqref{quiv_tail} or, equivalently, as the moduli space of nested instantons. Altogether, the previous considerations prove the following theorem.\footnote{We thank Valeriano Lanza for pointing out to us a correction to the original proof for the two-nodes quiver found in \cite{flach_jardim}.}
\begin{theorem}
The moduli space $\mathcal N(r,\bold n)$ of stable representation of the nested instantons quiver of numerical type $(r,\bold n)$ is a virtually smooth quasi-projective variety equipped with a natural action of $\mathbb T=T\times (\mathbb C^*)^r$, $T=(\mathbb C^*)^2$, and a perfect obstruction theory. The moduli space $\mathcal N(r,\bold n)$ can thus be identified in a suitable stability chamber with the moduli space of nested instantons $\mathcal N_{r,[r^1],n,\mu}$.
\end{theorem}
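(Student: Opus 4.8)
The plan is to assemble the preceding stability analysis into the theorem's four assertions—quasi-projectivity, the perfect obstruction theory (\ie virtual smoothness), the torus action, and the identification with $\mathcal N_{r,[r^1],n,\mu}$. For quasi-projectivity I would invoke the GIT construction recalled above: following King \cite{king}, the scheme
\[
\mathcal N^{ss}_{\chi_\Theta}(r,\bold n)=\mathbb X_0\git_{\chi_\Theta}\mathcal G=\Proj\Big(\bigoplus_{n\ge 0}A(\mathbb X_0(r,\bold n))^{\mathcal G,\chi^n}\Big)
\]
is quasi-projective over $\mathbb C$ for every character $\chi_\Theta$. Specialising $\chi_\Theta$ to the chamber of Corollary \ref{corollary_stability}, semistability and stability coincide, so $\mathcal N^{ss}_{\chi_\Theta}(r,\bold n)=\mathcal N^{s}_{\chi_\Theta}(r,\bold n)=:\mathcal N(r,\bold n)$ is a geometric quotient; as the $\mathcal G$-action was shown to be free there, it is a quasi-projective variety over $\mathbb C$.

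For the perfect obstruction theory the strategy is to present $\mathcal N(r,\bold n)$ as the zero locus of a section of a vector bundle on a smooth ambient space. The space $\mathbb X$ of nested ADHM data is linear, hence smooth, and the nested ADHM equations assemble into a $\mathcal G$-equivariant map $\mu:\mathbb X\to R$ to the linear relation space $R$, with $\mathbb X_0=\mu^{-1}(0)$. On the stable locus $\mathbb X^s$, where $\mathcal G$ acts freely, the quotient $[\mathbb X^s/\mathcal G]$ is a smooth quasi-projective variety, $\mu$ descends to a section of the associated bundle $\mathcal R=\mathbb X^s\times_{\mathcal G}R$, and $\mathcal N(r,\bold n)$ is its zero scheme. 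The standard obstruction theory of a zero locus then furnishes the two-term complex
\[
E^\bullet=\big[\,\mathcal R^\vee|_{\mathcal N}\longrightarrow\Omega_{[\mathbb X^s/\mathcal G]}|_{\mathcal N}\,\big],
\]
of amplitude $[-1,0]$, with virtual tangent complex $[\,T_{[\mathbb X^s/\mathcal G]}\to\mathcal R\,]|_{\mathcal N}$; equivalently this is governed by the three-term deformation complex $\mathfrak g\to T\mathbb X\to R$, whose vanishing of infinitesimal automorphisms (freeness of the action) reduces it on the variety to a two-term perfect complex, yielding virtual smoothness with $\vd\,\mathcal N(r,\bold n)=\dim[\mathbb X^s/\mathcal G]-\rk\mathcal R$.

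The torus action I would define by rescaling the two ADHM endomorphisms, $B^i_1\mapsto t_1B^i_1$ and $B^i_2\mapsto t_2B^i_2$ for $(t_1,t_2)\in T=(\mathbb C^*)^2$, with the compatible weights on $I,J,F^i$ forced by the relations (\eg $J\mapsto t_1t_2\,J$), together with the action of $(\mathbb C^*)^r$ as the maximal torus of $GL(W)$ acting on the framing $W\cong\mathbb C^r$ through $I$ and $J$. A direct check shows each generator preserves the nested ADHM equations and commutes with $\mathcal G$, so the $\mathbb T=T\times(\mathbb C^*)^r$-action descends to $\mathbb X_0$ and thence to $\mathcal N(r,\bold n)$; since $\mu$ is $\mathbb T$-equivariant, the obstruction theory above is $\mathbb T$-equivariant as well.

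Finally, the identification $\mathcal N(r,\bold n)\simeq\mathcal N_{r,[r^1],n,\mu}$ is a matter of matching definitions: the quiver \eqref{quiv_tail} with a single framing, equipped with the stability singled out in Corollary \ref{corollary_stability}, is by construction the datum defining the nested instanton moduli space of \cite{Bonelli:2019lal}, so the two GIT quotients coincide. I expect the genuinely delicate point to be the perfect obstruction theory: one must verify that, after quotienting by $\mathcal G$ and using $G^k=0$ on the stable locus, the deformation complex is honestly two-term of amplitude $[-1,0]$—that the relations are neither over- nor under-counted and that $\mu$ is regular enough for the zero-locus obstruction theory to be perfect—rather than merely reading off the naive tangent and obstruction spaces.
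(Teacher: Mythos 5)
Your reductions of quasi-projectivity to King's GIT construction, the definition of the $\mathbb T$-action, and the identification with $\mathcal N_{r,[r^1],n,\mu}$ all match what the paper does. The gap is exactly the point you flag and then defer: whether the relations are over-counted. They are. The nested ADHM equations are \emph{not} independent — composing the relation $[B_1^i,B_2^i]=0$ with $F^i$ and comparing with the commutation relations $B_{1,2}^{i-1}F^i=F^iB_{1,2}^{i}$ (and, for $i=1$, with $[B_1^0,B_2^0]+IJ=0$ and $JF^1=0$) produces one syzygy valued in $\Hom(V_i,V_{i-1})$ for each $i$. Consequently the paper's deformation complex \eqref{tang_obs_N} is a \emph{four}-term complex $\mathcal C^0\to\mathcal C^1\to\mathcal C^2\to\mathcal C^3$ with $\mathcal C^3=\bigoplus_{i=1}^N\Hom(V_i,V_{i-1})$ recording these syzygies via $d_2$. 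Your zero-locus presentation $[\mathcal R^\vee|_{\mathcal N}\to\Omega_{[\mathbb X^s/\mathcal G]}|_{\mathcal N}]$ with $\mathcal R$ built from the full relation space is a perfect obstruction theory in the formal Behrend--Fantechi sense, but it is not the one the paper equips $\mathcal N(r,\bold n)$ with: its obstruction space is $\mathcal C^2/\mathrm{im}\,d_1$ rather than $\ker d_2/\mathrm{im}\,d_1$, so its virtual dimension differs from the paper's by $\rk\mathcal C^3=\sum_{i=1}^N n_in_{i-1}$, and all the localization formulas of Section 3 (which carry the extra $\Lambda^2Q\otimes\Hom(V_i,V_{i-1})$ term in degree $3$) would come out wrong.

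The actual content of the paper's proof is therefore the step you omit: showing that \eqref{tang_obs_N} is quasi-isomorphic to a two-term complex, i.e.\ that $H^0(\mathcal C(X))=H^3(\mathcal C(X))=0$. The paper does this by fitting $\mathcal C(X)$ into the distinguished triangle \eqref{dist_triangle} with the auxiliary complexes $\mathcal C(\mathcal A)$, $\mathcal C(\mathcal B)$, $\mathcal C(\mathcal A,\mathcal B)$ and running the long exact sequence: $H^0$ vanishes because $H^0(\rho)$ is injective (injectivity of the $F^i$, i.e.\ condition \textbf{S1} of Proposition \ref{theta_stability}), and $H^3$ vanishes because $d_1:\mathcal C(\mathcal A,\mathcal B)^1\to\mathcal C(\mathcal A,\mathcal B)^2$ is surjective, which is proved by showing its adjoint $d_1^\vee$ has trivial kernel using the stability of the ADHM datum (condition \textbf{S2}). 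Both vanishings genuinely use stability and are not formal. Separately, note that what the paper calls ``virtual smoothness'' is not the existence of the obstruction theory but the embedding $\mathcal N(r,\bold n)\hookrightarrow\mathcal M(r,\bold n)$ into the smooth hyperk\"ahler quotient of Section \ref{sec:hyperkahler}, cut out by the deformed relations \eqref{relsM1}--\eqref{relsM2}; your smooth ambient $[\mathbb X^s/\mathcal G]$ is a different (and weaker) substitute for that statement.
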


\begin{proof}
The first part of the proof has already been proved. Consider then the following complex

\begin{equation}\label{tang_obs_N}
\mathcal C(X):
\begin{tikzcd}
\mathcal C^0(X)\arrow[r,"d_0"] & \mathcal C^1(X)\arrow[r,"d_1"] & \mathcal C^2(X)\arrow[r,"d_2"] & \mathcal C^3(X)
\end{tikzcd}
\end{equation}
with
\begin{equation}
\left\{
\begin{aligned}
& \mathcal C^0(X)=\bigoplus_{i=0}^N\End(V_i),\\
& \mathcal C^1(X)=\End(V_0)^{\oplus 2}\oplus\Hom(W,V_0)\oplus\Hom(V_0,W)\oplus\left[\bigoplus_{i=1}^N\left(\End(V_i)^{\oplus 2}\oplus\Hom(V_i,V_{i-1}\right)\right],\\
& \mathcal C^2(X)=\End(V_0)\oplus\Hom(V_1,W)\oplus\left[\bigoplus_{i=1}^N\left(\Hom(V_i,V_{i-1})^{\oplus 2}\oplus\End(V_i)\right)\right],\\
& \mathcal C^3(X)=\bigoplus_{i=1}^N\Hom(V_i,V_{i-1}),
\end{aligned}
\right.
\end{equation}
while the morphisms $d_i$ are defined as:
\begin{equation}
\begin{aligned}
&d_0(\boldsymbol h) = \begin{pmatrix}
[h_0,B_1^0]\\
[h_0,B_2^0]\\
h_0I\\
-Jh_0\\
[h_1,B_1^1]\\
[h_1,B_2^1]\\
h_0F^1-F^1h^1\\
\vdots
\end{pmatrix},\qquad 
d_1\begin{pmatrix}
b_1^0\\
b_2^0\\
i\\
j\\
b_1^1\\
b_2^1\\
f^1\\
\vdots
\end{pmatrix}=\begin{pmatrix}
[b_1^0,B_2^0]+[B_1^0,b_2^0]+iJ+Ij\\
jF^1+Jf^1\\
B_1^0f^1+b_1^0F^1-F^1b_1^1-f^1B_1^1\\
B_2^0f^1+b_2^0F^1-F^1b_2^1-f^1B_2^1\\
\vdots\\
[b_1^1,B_2^1]+[B_1^1,b_2^1]\\
\vdots\\
[b_1^N,B_2^N]+[B_1^N,b_2^N]
\end{pmatrix},\\
&d_2\begin{pmatrix}
c_1\\
c_2\\
c_3\\
\vdots\\
c_{3N+3}
\end{pmatrix}=\begin{pmatrix}
c_1F^1+B_2^0c_3-c_3B_2^1+c_4B_1^1-B_1^0c_4-Ic_2-F^1c_{2N+3}\\
\vdots\\
c_{2N+2+i}F^{i}+B_2^ic_{2+2i}-c_{2+2i}B_2^1+c_{3+2i}B_1^1-B_1^0c_{3+2i}-F^{i}c_{2N+3+i}\\
\vdots
\end{pmatrix}.
\end{aligned}
\end{equation}

Notice that the maps $d_0$ and $d_1$ are the linearisation of the action of $\mathcal G$ on $\mathbb X$ and of the nested instantons quiver relations (neglecting $G^i$, since $G^i=0,\forall i$), respectively. The morphism $d_2$ is instead signalling the fact that the quiver relations are not all independent.

Our claim is then that $H^0(\mathcal C(X))=H^3(\mathcal C(X))=0$, and that $\mathcal C(X)$ is an explicit representation of the perfect obstruction theory complex, so $H^1(\mathcal C(X))$ will be identified with the Zariski tangent to $\mathcal N(r,\bold n)$, while $H^2(\mathcal C(X))$ will encode the obstructions to its smoothness. In fact elements of $H^1(\mathcal C(X))$ parametrize infinitesimal displacements at given points, up to the $\mathcal G-$action, so $H^1(\mathcal C(X))$ provides a local model for the Zariski tangent space to $\mathcal N(r,\bold n)$. In the same way $H^2(\mathcal C(X))$ is interpreted to be the local model for the obstructions as its elements encode the linear dependence of the nested ADHM equations. Actually one might explicitly determine the truncated cotangent complex $\mathbb L_{\mathcal N(r,\bold n)}^{\bullet\ge 1}$ by a standard computation in deformation theory along the lines of \cite{Diaconescu:2008ct} and compute extension and obstruction classes in terms of the cohomology of the complex $\mathcal C(X)$.

In order to see that indeed the $0-$th and $3-$rd cohomology of $\mathcal C(X)$ does indeed vanish, we construct three other complexes $\mathcal C(\mathcal A)$, $\mathcal C(\mathcal B)$ and $\mathcal C(\mathcal A,\mathcal B)$:
\begin{equation}
\mathcal C(\mathcal A):
\begin{tikzcd}[row sep=-1mm]
 & \End(V_0)^{\oplus 2} & \\
 & \oplus & \\
\End(V_0)\arrow[r,"d_0"] & \Hom(W,V_0)\arrow[r,"d_1"] & \End(V_0)\\
 & \oplus & \\
 & \Hom(V_0,W)
\end{tikzcd}
\end{equation}
with
\begin{displaymath}
d_0(h_0)=\begin{pmatrix}
[h_0,B_1^0]\\
[h_0,B_2^0]\\
h_0I\\
-Jh_0
\end{pmatrix},\qquad
d_1\begin{pmatrix}
b_1^0\\
b_2^0\\
i\\
j
\end{pmatrix}=[b_1^0,B_2^0]+[B_1^0,b_2^0]+Ij+iJ;
\end{displaymath}

\begin{equation}
\mathcal C(\mathcal B):
\begin{tikzcd}[row sep=-1mm]
\bigoplus_{i=1}^N\End(V_i)\arrow[r,"d_0"] & \bigoplus_{i=1}^N\End(V_i)^{\oplus 2}\arrow[r,"d_1"] & \bigoplus_{i=1}^N\End(V_i)
\end{tikzcd}
\end{equation}
with
\begin{displaymath}
d_0\begin{pmatrix}
h_1\\
\vdots\\
h_N
\end{pmatrix}=\begin{pmatrix}
[h_1,B_1^1]\\
[h_1,B_2^1]\\
\vdots\\
[h_N,B_1^N]\\
[h_N,B_2^N]
\end{pmatrix},\qquad
d_1\begin{pmatrix}
b_1^1\\
b_2^1\\
\vdots\\
b_1^N\\
b_2^N
\end{pmatrix}=\begin{pmatrix}
[b_1^1,B_2^1]+[B_1^1,b_2^1]\\
\vdots\\
[b_1^N,B_2^N]+[B_1^N,b_2^N]
\end{pmatrix};
\end{displaymath}

\begin{equation}
\mathcal C(\mathcal A,\mathcal B):
\begin{tikzcd}[row sep=-1mm]
 & \bigoplus_{i=1}^N\Hom(V_i,V_{i-1})^{\oplus 2} & \\
\bigoplus_{i=1}^N\Hom(V_i,V_{i-1})\arrow[r,"d_0"] & \qquad\qquad\oplus\qquad\qquad\arrow[r,"d_1"] & \bigoplus_{i=1}^N\Hom(V_i,V_{i-1})\\
 & \Hom(V_1,W)
\end{tikzcd}
\end{equation}
with
\begin{displaymath}
d_0\begin{pmatrix}
f^1\\
\vdots\\
f^N
\end{pmatrix}=\begin{pmatrix}
-B_1^0f^1+f^1B_1^1\\
-B_2^0f^1+f^1B_2^1\\
\vdots
-B_1^{N-1}f^N+f^NB_1^N\\
-B_2^{N-1}f^N+f^NB_2^N\\
-Jf^1
\end{pmatrix},\qquad
d_1\begin{pmatrix}
c_3\\
\vdots\\
c_{2N+2}\\
c_2
\end{pmatrix}=\begin{pmatrix}
-B_2^0c_3+c_3B_2^1-c_4B_1^1+B_1^0c_4+Ic_2\\
\vdots\\
B_2^{N-1}c_{2N+2}-c_{2N+2}B_2^1+c_{2N+3}B_1^1-B_1^0c_{2N+3}
\end{pmatrix}.
\end{displaymath}

Then one can prove that there exists a distinguished triangle
\begin{equation}\label{dist_triangle}
\begin{tikzcd}
\mathcal C(X)\arrow[r] & \mathcal C(\mathcal A)\oplus\mathcal C(\mathcal B)\arrow[r,"\rho"] & \mathcal C(\mathcal A, \mathcal B)
\end{tikzcd},
\end{equation}
coming from the fact that $\mathcal C(X)[1]$ is a cone for $\rho=(\rho_0,\rho_1,\rho_2)$, where
\begin{subequations}
\begin{align}
\rho_0\begin{pmatrix}
h_0\\
\vdots\\
h_N
\end{pmatrix}&=\begin{pmatrix}
-h_0F^1+F^1h_1\\
\vdots\\
-h_{N-1}F^N+F^Nh_N
\end{pmatrix}\\
\rho_1\begin{pmatrix}
b_1^0\\
b_2^0\\
i\\
j\\
b_1^1\\
b_2^1\\
\vdots
\end{pmatrix}&=\begin{pmatrix}
-b_1^0F^1+F^1b_1^1\\
-b_2^0F^1+F^1b_2^1\\
\vdots\\
-b_1^{N-1}F^N+F^Nb_1^N\\
-b_2^{N-1}F^N+F^Nb_2^N\\
-jF^1
\end{pmatrix}\\
\rho_2\begin{pmatrix}
c_1\\
c_{2N+3}\\
\vdots\\
c_{3N+3}
\end{pmatrix}&=\begin{pmatrix}
-c_1F^1+F^1c_{2N+3}\\
\vdots\\
-c_{3N+2}F^N+F^Nc_{3N+3}
\end{pmatrix}
\end{align}
\end{subequations}
By the triangle \eqref{dist_triangle} one gets the long sequence of cohomologies:
\begin{equation}
\begin{split}
0\longrightarrow H^0(\mathcal C(X))\longrightarrow H^0(\mathcal C(\mathcal A)\oplus\mathcal C(\mathcal B))\xrightarrow{H^0(\rho)}H^0(\mathcal C(\mathcal A,\mathcal B))\longrightarrow H^1(\mathcal C(X))\longrightarrow\\
\longrightarrow H^1(\mathcal C(\mathcal A)\oplus\mathcal C(\mathcal B))\xrightarrow{H^1(\rho)} H^1(\mathcal C(\mathcal A,\mathcal B))\longrightarrow H^2(\mathcal C(X))\longrightarrow H^2(\mathcal C(\mathcal A)\oplus\mathcal C(\mathcal B))\longrightarrow\\
\longrightarrow H^2(\mathcal C(\mathcal A,\mathcal B))\longrightarrow H^3(\mathcal C(X))\longrightarrow 0,
\end{split}
\end{equation}
and, since $\mathcal A$ is a stable representation of the standard ADHM quiver, $H^0(\mathcal C(\mathcal A))=H^2(\mathcal C(\mathcal A))=0$. Then $H^0(\mathcal C(X))=0$ by the injectivity of $H^0(\rho):H^0(\mathcal C(\mathcal B))\to H^0(\mathcal C(\mathcal A,\mathcal B))$. In fact we have
\begin{equation}
H^0(\rho)\begin{pmatrix}
h_1\\
\vdots\\
h_N
\end{pmatrix}=0\Rightarrow\begin{pmatrix}
F^1h_1\\
\vdots\\
F^Nh_N
\end{pmatrix}=0\Rightarrow\begin{pmatrix}
h_1\\
\vdots\\
h_N
\end{pmatrix}=0,
\end{equation}
since $F^i$ is injective. Moreover the stability of $X$ implies that $d_1:\mathcal C(\mathcal A,\mathcal B)^1\to\mathcal C(\mathcal A,\mathcal B)^2$ is surjective: this in turn means that $H^2(\mathcal C(\mathcal A,\mathcal B))=0$, which implies $H^3(\mathcal C(X))=0$. In fact let's take $d_1^\vee$:
\begin{equation}
d_1^\vee(\boldsymbol\phi)=d_1^\vee\begin{pmatrix}
\phi_1\\
\vdots\\
\phi_N
\end{pmatrix}=\begin{pmatrix}
B_2^1\phi_1-\phi_1B_2^0\\
-B_1^1\phi_1+\phi_1B_1^1\\
\vdots\\
B_2^N\phi_N-\phi_NB_2^{N-1}\\
-B_1^N\phi_N+\phi_NB_1^{N-1}\\
\phi_1I
\end{pmatrix},
\end{equation}
and if $\boldsymbol\phi\in\ker(d_1^\vee)$ then $\ker(\phi_1)$ would be a $(B_1^0,B_2^0)-$invariant subset of $V_0$ containing $\im(I)$ which contradicts the stability of $X$, by which we conclude that $\ker(\phi_1)=V_0$. Similar statements hold also for each other component of $\boldsymbol\phi$, which we then conclude to be $\boldsymbol\phi=0$.

The only thing left to prove is the virtual smoothness, namely that the moduli space $\mathcal N(r,\bold n)$ of stable representations of the nested instantons quiver is embedded in a smooth variety which is obtained as an hyperk\"ahler quotient. We will leave this for section \ref{sec:hyperkahler}.
\end{proof}

For future reference we want now to exhibit some morphisms between different nested instantons moduli spaces and between them and usual moduli spaces of instantons, which are moduli spaces of framed torsion-free sheaves on $\mathbb P^2$. We obviously have iterative forgetting projections $\eta_i:\mathcal N(r,n_0,\dots,n_i)\to\mathcal N(r,n_0,\dots,n_{i-1})$. Moreover we also have other morphisms to underlying Hilbert schemes of points on $\mathbb C^2$, which are summarized by the commutative diagram in figure \ref{comm_diag_tail}.
\begin{figure}[htb!]
\begin{center}
\includegraphics[width=\textwidth]{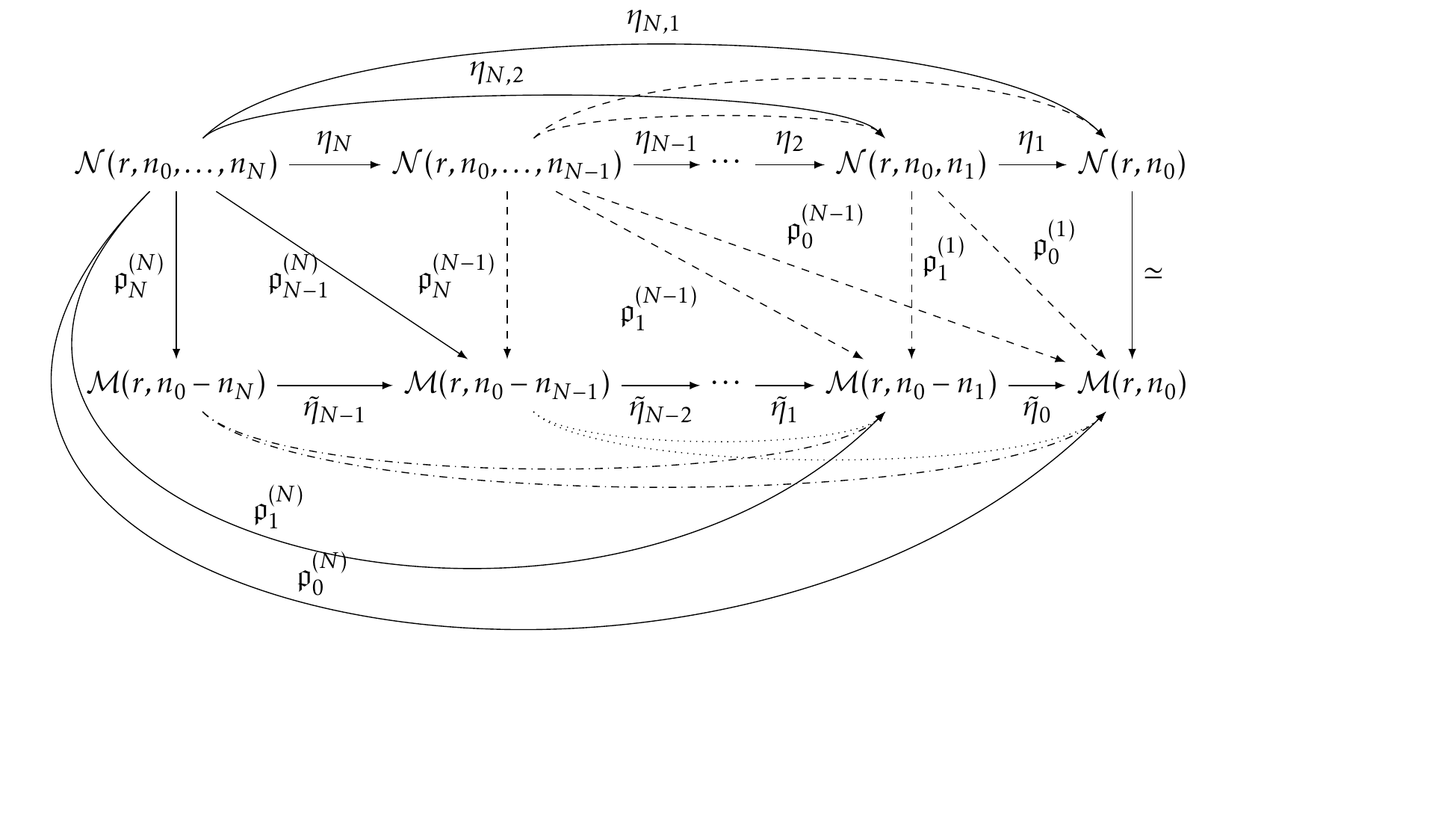}
\caption{Morphisms between moduli spaces of flags of sheaves and moduli spaces of sheaves.}\label{comm_diag_tail}
\end{center}
\end{figure}
In order to see that these maps do indeed exist, take a stable representation $[X]$ of the nested instantons quiver. The fact that $[X]$ is stable implies that the morphisms $F^i$ are injective, so that we can construct the stable ADHM datum $(W,\tilde V_i,\tilde B_1^i,\tilde B_2^i,\tilde I^i,\tilde J^i)$ as follows. Let $\tilde V_i$ be $V_0/\im(F^1\cdots F^i)$ and choose a basis of $V_i$ in such a way that
\begin{displaymath}
F^1\cdots F^i=\begin{pmatrix}
\mathbbm 1_{V_i}\\ 0
\end{pmatrix},\qquad F^1\circ F^2\circ\cdots\circ F^i:V_i\to V_0,
\end{displaymath} 
whence $V_0=V_i\oplus\tilde V_i$. Then define the projections $\pi_i=V_0\to V_i$ and $\tilde\pi_i:V_0\to\tilde V_i$ as $\pi_i(v,\tilde v)=v$ and $\tilde\pi_i(v,\tilde v)=\tilde v$, with $v\in V_i$, $\tilde v\in\tilde V_i$. We can then show how $\tilde V_i$ inherits an ADHM structure by its embedding in $V_0$. Indeed if we define $\tilde B_1^i=B_1^0|_{\tilde V_i}$, $\tilde B_2^i=B_2^0|_{\tilde V_i}$, $\tilde I^i=\tilde\pi_i\circ I$ and $\tilde J^i=J|_{\tilde V_i}$ the datum $(W,\tilde V_i,\tilde B_1^i,\tilde B_2^i,\tilde I^i,\tilde J^i)$ satisfies the ADHM equation \eqref{adhm_proj}.
\begin{equation}\label{adhm_proj}
[\tilde B_1^i,\tilde B_2^i]+\tilde I^i\tilde J^i=[B_1^0|_{\tilde V_i},B_2^0|_{\tilde V_i}]+\tilde\pi_i\circ I\circ J|_{\tilde V_i}=\bigg(\left[B_1^0,B_2^0\right]+IJ\bigg)\bigg|_{\tilde V_i}=0.
\end{equation}
This new ADHM datum is moreover stable, as if it would exist $0\subset\tilde S_i\subset\tilde V_i$ such that $\tilde B_{1,2}^i(\tilde S_i),\tilde I^i(W)\subset\tilde{S_i}$ it would imply that also the ADHM datum $(W,V_0,B_1^0,B_2^0,I,J)$ wouldn't be stable. In fact in that case we could take $0\subset V_i\oplus\tilde S_i\subset V_0$ and it would be such that $B_1^0(V_i\oplus\tilde S_i),B_2^0(V_i\oplus\tilde S_i),I(W)\subset V_i\oplus\tilde S_i$. In fact if we take any $(v,\tilde s)\in V_i\oplus\tilde S_i$ it happens that $B_1^0(v,s)=(B_1^0|_{V_i}(v),B_1^0|_{\tilde V_i}\tilde s)=(B_1^0|_{V_i},\tilde B_1^i(\tilde s))\in V_i\oplus\tilde S_i$, $B_2^0(v,s)=(B_2^0|_{V_i}(v),B_2^0|_{\tilde V_i}\tilde s)=(B_2^0|_{V_i},\tilde B_2^i(\tilde s))\in V_i\oplus\tilde S_i$ and $I(W)=I(W)\cap V_i\oplus I(W)\cap\tilde V_i=(\pi_i\circ I)(W)\oplus(\tilde\pi_i\circ I)(W)\subset V_i\oplus\tilde S_i$. Thus we constructed a map $\mathfrak p^{(N)}_i:\mathcal N(r,n_0,\dots,n_{N})\to\mathcal M(r,n_0-n_i)$.

\subsection{Virtual smoothness}\label{sec:hyperkahler}
In this section we exhibit an embedding of the moduli space of nested instantons into a smooth projective variety, which is moreover hyperk\"ahler. In the following vector space
\begin{equation}\label{direct_sum_dec_M}
\begin{split}
\mathbb X=\End(V_0)^{\oplus 2}\oplus\Hom(W,V_0)\oplus\Hom(V_0,W)\bigoplus_{k=1}^{N}\left[\End(V_k)^{\oplus 2}\oplus\Hom(V_{k-1},V_k)\right.\\
\left.\oplus\Hom(V_k,V_{k-1})\right]
\end{split}
\end{equation}
we will introduce a family of relations:
\begin{align}\label{relsM1}
& [B_1^0,B_2^0]+IJ+F^1G^1=0,\\ \label{relsM2}
& [B_1^i,B_2^i]-G^iF^i+F^{i+1}G^{i+1}=0,\quad i=1,\dots,N.
\end{align}
Then an element $(B_1^0,B_2^0,I,J,\{B_1^i,B_2^i,F^i,G^i\})X\in\mathbb X$ is called stable if it satisfies conditions  $\bold{S1}$ and $\bold{S2}$ in proposition \ref{theta_stability}. With these conventions we will define $\mathbb M(r,\bold n)$ to be the space of stable elements of $\mathbb X$ satisfying the relations \eqref{relsM1}-\eqref{relsM2}:
\begin{equation}
\mathbb M(r,\bold n)=\{X\in\mathbb X:X\ \rm{is\ stable\ and\ satisfies\ }\eqref{relsM1},\eqref{relsM2}\}.
\end{equation}
Exactly in the same way as we did before we can easily see that there is a natural action of $\mathcal G=GL(V_0)\times\cdots\times GL(V_N)$ which is free on $\mathbb M(r,\bold n)$ and preserves the equations \eqref{relsM1}-\eqref{relsM2}: the same is then true for the natural $\mathcal U-$action on $\mathbb M(r,\bold n)$, with $\mathcal U=U(V_0)\times\cdots\times U(V_N)$. Thus a moduli space $\mathcal M(r,\bold n)$ of stable $\mathcal U-$orbits in $\mathbb M(r,\bold n)$ can be defined by means of GIT theory, as it was the case for $\mathcal N(r,\bold n)$ in the previous sections. It is moreover obvious that $\mathcal N(r,\bold n)\hookrightarrow\mathcal M(r,\bold n)$, as any stable point of $\mathbb X$ satisfying the nested ADHM equations automatically satisfies \eqref{relsM1} and \eqref{relsM2}.

Next let us point out that on each $T\Hom(V_i,V_k)$ we can introduce an hermitean metric by defining
\begin{equation}
\langle X,Y\rangle=\frac{1}{2}\Tr\left(X\cdot Y^\dagger+X^\dagger\cdot Y\right),\qquad\forall X,Y\in\Hom(V_i,V_k),
\end{equation}
which in turn can be linearly extended to a hermitean metric $\langle-,-\rangle:T\mathbb M(r,\bold n)\times T\mathbb M(r,\bold n)\to\mathbb C$. Finally we can introduce some complex structures on $T\mathbb M(r,\bold n)$: given $X\in T\mathbb M(r,\bold n)$ these are defined as the following $I,J,K\in\End(T\mathbb M(r,\bold n))$
\begin{align}
& I(X)=\sqrt{-1}X,\\
& J(X)=(-b_2^{0\dagger},b_1^{0\dagger},-j^\dagger,i^\dagger,\{-b_2^{i\dagger},b_1^{i\dagger},-g^{i\dagger},f^{i\dagger}\}),\\
& K(X)=I\circ J(X),
\end{align}
with $X=(b_1^0,b_2^0,i,j,\{b_1^i,b_2^i,f^i,g^i\})$. These three complex structures make the datum of 
$$(\mathbb M(r,\bold n),\langle-,-\rangle,I,J,K)$$
a hyperk\"ahler manifold, as one can readily verify. It is a standard fact that once we fix a particular complex structure, say $I$, and its respective K\"ahler form, $\omega_I$, the linear combination $\omega_{\mathbb C}=\omega_J+\sqrt{-1}\omega_K$ is a holomorphic symplectic form for $\mathbb M(r,\bold n)$.  The thing we finally want to prove is that the hyperk\"ahler structure on $\mathbb M(r,\bold n)$ induce a hyperk\"ahler structure on the GIT quotient $\mathcal M(r,\bold n)$, which will be moreover proven to be smooth. This is made possible by the fact that the natural $\mathcal U-$action on $\mathbb M(r,\bold n)$ preserves the hermitean metric and the complex structures we introduced. Then, letting $\mathfrak u$ be the Lie algebra of the group $\mathcal U$, we need to construct a moment map
\begin{displaymath}
\mu:\mathbb M(r,\bold n)\to \mathfrak u^*\otimes\mathbb R^3,
\end{displaymath}
satisfying
\begin{enumerate}
\item $\mathcal G-$equivariance: $\mu(g\cdot X)=\Ad_{g^-1}^*\mu(X)$;
\item $\langle\de\mu_i(X),\xi\rangle=\omega_i(\xi^*,X)$, for any $X\in T\mathbb M(r,\bold n)$ and $\xi\in\mathfrak u$ generating the vector field $\xi^*\in T\mathbb M(r,\bold n)$.
\end{enumerate}
If then $\zeta\in\mathfrak u^*\otimes\mathbb R^3$ is such that $\Ad_g^*(\zeta_i)=\zeta_i$ for any $g\in\mathcal U$, $\mu^{-1}(\zeta)$ is $\mathcal U-$invariant and it makes sense to consider the quotient space $\mu^{-1}(\zeta)/\mathcal U$. It is known, \cite{Hitchin1987}, that if $\mathcal U$ acts freely on $\mu^{-1}(\zeta)/\mathcal U$, the latter is a smooth hyperk\"ahler manifold, with complex structures and metric induced by those of $\mathbb M(r,\bold n)$.

Our task of finding a moment map $\mu:\mathbb M(r,\bold n)\to\mathfrak u^*\otimes\mathbb R^3$ then translates into the following. Define $(\mu_1^0,\dots,\mu_1^N)=\mu_1:\mathbb M(r,\bold n)\to\mathfrak u$
\begin{equation}\label{mu1}
\left\{
\begin{aligned}
&\mu_1^0(X)=\frac{\sqrt{-1}}{2}\left([B_1^0,B_1^{0\dagger}]+[B_2^0,B_2^{0\dagger}]+II^\dagger-J^\dagger J+F^1F^{1\dagger}-G^{1\dagger}G^1\right)\\
&\mu_1^1(X)=\frac{\sqrt{-1}}{2}\left([B_1^1,B_1^{1\dagger}]+[B_2^1,B_2^{1\dagger}]-F^{1\dagger}F^1+G^1G^{1\dagger}+F^2F^{2\dagger}-G^{2\dagger}G^2\right)\\
&\qquad\qquad\vdots\\
&\mu_1^N(X)=\frac{\sqrt{-1}}{2}\left([B_1^N,B_1^{N\dagger}]+[B_2^N,B_2^{N\dagger}]-F^{N\dagger}F^N+G^NG^{N\dagger}\right),
\end{aligned}
\right.
\end{equation}
with $X=(B_1^0,B_2^0,I,J,\{B_1^i,B_2^i,F^i,G^i\})\in\mathbb M(r,\bold n)$. In addition to $\mu_1$ we also define a map $\mu_{\mathbb C}:\mathbb M(r,\bold n)\to\mathfrak{g}$, with $\mathfrak g=\mathfrak{gl}(V_0)\times\cdots\times\mathfrak{gl}(V_N)$:
\begin{equation}\label{muC}
\left\{
\begin{aligned}
&\mu_{\mathbb C}^0(X)=[B_1^0,B_2^0]+IJ+F^1G^1\\
&\mu_{\mathbb C}^1(X)=[B_1^1,B_2^1]-G^1F^1+F^2G^2\\
&\qquad\qquad\vdots\\
&\mu_{\mathbb C}^N(X)=[B_1^N,B_2^N]-G^NF^N,
\end{aligned}
\right.
\end{equation}
by means of which we define $\mu_{2,3}:\mathbb M(r,\bold n)\to\mathfrak u$ as $\mu_{\mathbb C}(X)=(\mu_2+\sqrt{-1}\mu_3)(X)$. Notice that in absence of $B_i^j$ and $I,J$ the complex moment map we defined would reduce to the Crawley-Boevey moment map in \cite{crawley-boevey_2001}. We then claim that $\mu=(\mu_1,\mu_2,\mu_3)$ is a moment map for the $\mathcal U-$action on $\mathbb M(r,\bold n)$. If this is true and $\chi$ is the algebraic character we introduced in section \ref{sec:moduli_space}, the space
\begin{equation}
\widetilde{\mathcal M}(r,\bold n)=\frac{\mu_1^{-1}(\sqrt{-1}\de\chi)\cap\mu_{\mathbb C}^{-1}(0)\cap\mathbb M(r,\bold n)}{\mathcal U}=\frac{\mu^{-1}(\sqrt{-1}\de\chi,0,0)\cap\mathbb M(r,\bold n)}{\mathcal U}
\end{equation}
is a smooth hyperk\"ahler manifold which, by an analogue of Kempf-Ness theorem is also isomorphic to $\mathcal M(r,\bold n)$. In fact it is known, due to a result of \cite{king,nakajima_lectures} and the characterization of $\chi-$(semi)stable points we gave in the previous sections, that there exists a bijection between $\mu_1^{-1}(\sqrt{-1}\de\chi)$ and the set of $\chi-$(semi)stable points in $\mu_{\mathbb C}^{-1}(0)$. Then, in order to prove that $\mu$ is actually a moment map, we will first compute the vector field $\xi^*$ generated by a generic $\xi\in\mathfrak u$. Let then $X=(b_1^0,b_2^0,i,j,\{b_1^i,b_2^i,f^i,g^i\})$ be a vector in $T\mathbb M(r,\bold n)$ and $\Psi_X:\mathcal U\to\mathbb M(r,\bold n)$ the action of $\mathcal U$ onto $X\in\mathbb M(r,\bold n)$: the fundamental vector field generated by $\xi\in\mathfrak u$ is
\begin{equation}
\xi^*|_X=\de\Psi_X(\mathbbm 1_{\mathcal U})(\xi)=\frac{\de}{\de t}\left.\left(\Psi_X\circ\gamma\right)\right|_{t=0},
\end{equation}
where $\gamma$ is a smooth curve $\gamma:(-\epsilon,\epsilon)\to\mathcal U$ such that $\gamma(0)=\mathbbm 1_{\mathcal U}$ and $\dot\gamma(0)=\xi$. Thus we can compute
\begin{equation}
\begin{split}
\xi^*|_X=\left([\xi_0,b_1^0],[\xi_0,b_2^0],\xi_0i,-j\xi_0,[\xi_1,b_1^1],[\xi_1,b_2^1],\xi_0f^1-f^1\xi_1,\xi_1g^1-g^1\xi_0,\dots\right.\\
\left.\dots,[\xi_N,b_1^N],[\xi_N,b_2^N],\xi_{N-1}f^N-f^N\xi_N,\xi_Ng^N-g^N\xi_{N-1}\right).
\end{split}
\end{equation}
Then if $\pi_i:\mathbb M(r,\bold n)\to\mathbb M(r,\bold n)$ denotes the projection on the $i-$th component of the direct sum decomposition induced by \eqref{direct_sum_dec_M} so that $i$ runs over the index set $\mathcal I$, by inspection one can see that $\omega_1$ is exact, and in particular $\omega_1=\de\lambda_1$, with
\begin{equation}
\lambda_1=\frac{\sqrt{-1}}{2}\Tr\left(\sum_{i\in\mathcal I}\pi_i\wedge\pi_{i*}^\dagger\right).
\end{equation}
This implies that
\begin{equation}
\langle\mu_1(x),\xi\rangle=\imath_{\xi^*}\lambda_1,
\end{equation}
and it is easy to verify that $\mu_1:\mathbb M(r,\bold n)\to\mathfrak u^*$ thus defined indeed matches with the definition \eqref{mu1}. Similarly one can realize that
\begin{align}
&\lambda_2=\Re\left[\Tr\left(\sum_{i\in2\mathbb Z\cap\mathcal I}\pi_i\wedge\pi_{1+1*}\right)\right],\\
&\lambda_3=-\sqrt{-1}\Im\left[\Tr\left(\sum_{i\in2\mathbb Z\cap\mathcal I}\pi_i\wedge\pi_{1+1*}\right)\right].
\end{align}
and the moment map components satisfying $\langle\mu_i(x),\xi\rangle=\imath_{\xi^*}\lambda_i$ agree with the combination $\mu_2+\sqrt{-1}\mu_3=\mu_{\mathbb C}$ we gave previously in equation \eqref{muC}.

\section{Flags of framed torsion-free sheaves on $\mathbb P^2$}\label{sec2}
We give in this paragraph the construction of the moduli space of flags of framed torsion-free sheaves of rank $r$ on the complex projective plane. We also show that there exists a natural isomorphism between the moduli space of flags of framed torsion-free sheaves on $\mathbb P^2$ and the stable representations of the nested instantons quiver. In the rank $r=1$ case our definition reduces to the nested Hilbert scheme of points on $\mathbb C^2$, as it is to be expected. By this reason we first want to carry out the analysis of the simpler $r=1$ case, which also has the advantage of providing us with a new characterization of punctual nested Hilbert schemes on $\mathbb C^2$, analogous to that of \cite{bulois_evain}.

\subsection{$\Hilb^{\hat{\bold n}}(\mathbb C^2)$ and $\mathcal N(1,\bold n)$}\label{sec:nhs_iso}
Before delving into the analysis of the relation between nested instantons moduli spaces and flags of framed torsion-free sheaves on $\mathbb P^2$, we want to show a special simpler case. In particular we will prove the existence of an isomorphism between the nested Hilbert scheme of points in $\mathbb C^2$ and the nested instantons moduli space $\mathcal N(1,n_0,\dots,n_N)$. This effectively gives us the ADHM construction of a general nested punctual nested Hilbert scheme on $\mathbb C^2$, which will also be the local model for more general nested Hilbert schemes of points on, say, toric surfaces $S$. In order to see this, we first recall the definition of a nested Hilbert scheme of points.
\begin{definition}
Let $S$ be a complex (projective) surface and $n_1\ge n_2\ge\cdots\ge n_k$ a sequence of integers. The nested Hilbert scheme of points on $S$ is defined as
\begin{equation}
\Hilb^{(n_1,\dots,n_k)}(S)=S^{[n_1,\dots,n_k]}=\left\{I_1\subseteq I_2\subseteq\cdots\subseteq I_k\subseteq\mathcal O_S:{\rm length}\left(\mathcal O_S/I_i\right)=n_i\right\}.
\end{equation}
Alternatively, if $X$ is a quasi-projective scheme over the complex numbers, we can equivalently define the nested Hilbert scheme $X^{[n_1,\dots,n_k]}=\Hilb^{(n_1,\dots,n_k)}(X)$ as
\begin{equation}
\Hilb^{(n_1,\dots,n_k)}(X)=\left\{(Z_1,\dots,Z_k):Z_i\in\Hilb^{n_i}(X),\ Z_i\ {\rm is\ a\ subscheme\ of }\ Z_j\ {\rm if }\ i<j\right\},
\end{equation}
with $Z_i$ being a zero-dimensional scheme, for every $i=1,\dots,k$.
\end{definition}

Before actually exhibiting the isomorphism we are interested in, we want to prove an auxiliary result, which gives an alternative definition for the nested Hilbert schemes over the affine plane, analogously to the case of Hilbert schemes studied in \cite{nakajima_lectures}.

\begin{proposition}\label{prop:adhm_1}
Let $\mathbbm k$ be an algebraically closed field, and $\bold n$ a sequence of integers $n_0\ge n_1\ge\cdots\ge n_k$. Define $\hat{\bold n}$ to be the sequence of integers $\hat{n}_0=n_0\ge\hat{n}_1=n_0-n_k\ge\cdots\ge\hat{n}_k=n_0-n_1$, then there exists an isomorphism
\begin{equation}
\Hilb^{\hat{\bold n}}(\mathbb A^2)\simeq\left\{(b_1^0,b_2^0,i,b_1^1,b_2^1,f_1,\dots,b_1^k,b_2^k,f_k)\left|
\begin{aligned}
{\rm (i)}&\ [b_1^i,b_2^i]=0\\
{\rm (ii)}&\ b_{1,2}^{i-1}f_i-f_ib_{1,2}^{i}=0\\
{\rm (iii)}&\ \nexists S\subset\mathbbm k^{n_0}:b^0_{1,2}(S)\subset S\ {\rm and}\\
&\qquad\qquad\quad \im(i)\subset S\\
{\rm (iv)}&\ f_i:\mathbbm k^{n_i}\to\mathbbm k^{n_{i-1}}\ {\rm is\ injective}
\end{aligned}
\right.\right\}\bigg/\mathcal G_{\bold n},
\end{equation}
where $\mathcal G_{\bold n}=GL_{n_0}(\mathbbm k)\times\cdots\times GL_{n_k}(\mathbbm k)$, $b_{1,2}^i\in\End(\mathbbm k^{n_i})$, $i\in\Hom(\mathbbm k,\mathbbm k^{n_0})$ and $f_i\in\Hom(\mathbbm k^{n_i},\mathbbm k^{n_{i-1}})$. The action of $\mathcal G_{\bold n}$ is given by
\begin{displaymath}
\bold g\cdot(b_1^0,b_2^0,i,\dots,b_1^k,b_2^k,f_k)=(g_0b_1^0g_0^{-1},g_0b_2^0g_0^{-1},g_0i,\dots,g_kb_1^kg_k^{-1},g_kb_2^kg_k^{-1},g_{k-1}f_kg_k^{-1}).
\end{displaymath}
\end{proposition}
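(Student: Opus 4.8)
The plan is to reduce everything to the classical ADHM description of the ordinary Hilbert scheme $\Hilb^{n_0}(\mathbb A^2)$ (the $k=0$ case, due to Nakajima, see \cite{nakajima_lectures}, which is characteristic-free) and then to account for the flag structure by a submodule/ideal dictionary applied level by level. Recall that a triple $(b_1^0,b_2^0,i)$ satisfying (i) and (iii) endows $V_0=\mathbbm k^{n_0}$ with the structure of a cyclic $\mathbbm k[x,y]$-module generated by $i(1)$, with $x,y$ acting as $b_1^0,b_2^0$; cyclicity is exactly condition (iii). Via $p(x,y)\mapsto p(b_1^0,b_2^0)i(1)$ one obtains a surjection $\mathbbm k[x,y]\twoheadrightarrow V_0$ whose kernel $I_0\subseteq\mathbbm k[x,y]$ is an ideal of colength $n_0$, and Nakajima's theorem states that $(b^0,i)\mapsto I_0$ descends to an isomorphism of the quotient $\{(b^0,i):\text{(i),(iii)}\}/GL_{n_0}$ with $\Hilb^{n_0}(\mathbb A^2)$.

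The flag data is organised by the composites $\Phi_i := f_1\circ\cdots\circ f_i\colon\mathbbm k^{n_i}\to\mathbbm k^{n_0}$. Condition (ii) says each $f_i$ intertwines the operators, so $\Phi_i$ is a morphism of $\mathbbm k[x,y]$-modules, while (iv) makes $\Phi_i$ injective. Hence $W_i:=\im\Phi_i$ is a $b^0$-invariant subspace of $V_0$ of dimension $n_i$, and the $W_i$ form a descending flag $V_0=W_0\supseteq W_1\supseteq\cdots\supseteq W_k$ of submodules. Conversely, for a fixed $V_0$ the $\mathcal G_{\bold n}$-orbit of the remaining data $(\{b^i,f_i\})$ both determines and is determined by this flag: given $W_{i-1}$ realised on $\mathbbm k^{n_{i-1}}$, an injective intertwiner $f_i$ is the same as a submodule $W_i\subseteq W_{i-1}$ together with an identification $\mathbbm k^{n_i}\simeq W_i$, and two choices differing by $GL_{n_i}$ give the same $W_i$. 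An induction on $i$ then shows that a point of the right-hand quotient is exactly the datum of $I_0\in\Hilb^{n_0}(\mathbb A^2)$ together with a flag of submodules $W_\bullet$ of $\mathbbm k[x,y]/I_0$ of dimensions $(n_1,\dots,n_k)$.

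Next I would translate the submodule flag into a chain of ideals. Submodules of $\mathbbm k[x,y]/I_0$ correspond bijectively to ideals between $I_0$ and $\mathbbm k[x,y]$: to $W_i$ one associates the ideal $J_i\supseteq I_0$ with $W_i=J_i/I_0$, so that $\mathbbm k[x,y]/J_i\cong V_0/W_i$ has colength $n_0-n_i$. Because $W_0\supseteq\cdots\supseteq W_k$, the ideals satisfy $I_0\subseteq J_k\subseteq J_{k-1}\subseteq\cdots\subseteq J_1$, and setting $\mathcal I_0=I_0$ and $\mathcal I_j=J_{k+1-j}$ for $j\ge 1$ produces a chain $\mathcal I_0\subseteq\mathcal I_1\subseteq\cdots\subseteq\mathcal I_k$ whose colengths are $n_0\ge n_0-n_k\ge\cdots\ge n_0-n_1$, that is exactly $\hat n_0\ge\hat n_1\ge\cdots\ge\hat n_k$. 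This is precisely a point of $\Hilb^{\hat{\bold n}}(\mathbb A^2)$, and the construction is manifestly reversible, giving the bijection on $\mathbbm k$-points.

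The main obstacle is to upgrade this bijection to an isomorphism of schemes, which I would do functorially by carrying out all of the above in flat families. On the stable locus the classical construction produces a universal locally free sheaf $\mathcal V_0$ of rank $n_0$ with its commuting operators and framing over the quotient scheme, and a flag of ADHM data over a base $B$ is the same as a flag of $\mathcal O_{\mathbb A^2_B}$-submodule sheaves $\mathcal W_i$ of $\mathcal V_0$ that are fibrewise of the prescribed ranks. The submodule/ideal dictionary, applied relatively over $B$, identifies such a relative flag with a $B$-flat chain of relative ideals, i.e. with a $B$-point of $\Hilb^{\hat{\bold n}}(\mathbb A^2)$; conversely the nested Hilbert functor produces, via its universal chain of ideals and the relative quotients, a family of flag ADHM data, well defined up to $\mathcal G_{\bold n}$ and hence a morphism to the quotient. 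These two natural transformations are mutually inverse because they are so on points, yielding the claimed isomorphism. The point requiring care is the flatness and local freeness of the relative quotients $\mathcal V_0/\mathcal W_i$ and of the $\mathcal W_i$ themselves, which is what makes the relative flag of submodules representable and matches it with the representable nested Hilbert functor; this is exactly where hypotheses (ii) and (iv), guaranteeing constant fibre ranks, are indispensable.
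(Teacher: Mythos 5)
Your argument is correct and follows essentially the same route as the paper: both pass through the composites $f_1\circ\cdots\circ f_i$ to realise each $\mathbbm k^{n_i}$ as a $b^0$-invariant subspace of the cyclic module $\mathbbm k[x,y]/I_0$, and then use the standard dictionary between such submodules and ideals containing $I_0$ (the paper phrases this via explicit complements $V_0=\tilde V_i\oplus V_i$ and kernels of the evaluation maps $\phi_i$, which is the same content). Your version is in fact slightly more careful on two points the paper glosses over, namely the index reversal producing $\hat{\bold n}$ and the upgrade from a bijection on closed points to an isomorphism of schemes via flat families.
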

\begin{proof}
Suppose we have a sequence of ideals $I_0\subseteq I_1\subseteq\cdots I_k\in\Hilb^{\hat{\bold n}}(\mathbb A^2)$. Let's first define $V_0=\mathbbm k[z_1,z_2]/I_0$, $b_{1,2}^0\in\End(V_0)$ to be the multiplication by $z_{1,2}\mod I_0$, and $i\in\Hom(\mathbbm k,V_0)$ by $i(1)=1\mod I_0$. Then obviously $[b_1^0,b_2^0]=0$ and condition (iii) holds since $1$ multiplied by products of $z_1$ and $z_2$ spans the whole $\mathbbm k[z_1,z_2]$. Then define $\tilde V_i=\mathbbm k[z_1,z_2]/I_i$ and, since $I_0\subseteq I_i$ for any $i>0$, complete $\tilde V_i$ to $V_0$ as $V_0=\tilde V_i\oplus V_i$, so that $V_i\simeq\mathbbm k^{n_i}$. The restrictions of $b_{1,2}^0$ to $V_i$ then yield homomorphisms $b_{1,2}^i\in\End(V_i)$ naturally satisfying $[b_1^i,b_2^i]=0$, while the inclusion of the ideals $I_0\subseteq I_1\subseteq\cdots\subseteq I_k$ implies the existence of an embedding $f_i:V_i\hookrightarrow V_{i-1}$ such that condition (ii) holds by construction.

Conversely, let $(b_1^0,b_2^0,i,\dots,b_1^k,b_2^k,f_k)$ be given as in the proposition. In the first place one can define a map $\phi_0:\mathbbm k[z_1,z_2]\to\mathbbm k^{n_0}$ to be $\phi_0(f)=f(b_1^0,b_2^0)i(1)$. This map is surjective, so that $I_0=\ker\phi$ is an ideal for $\mathbbm k[z_1,z_2]$ of length $n_0$. Then, since $f_i\in\Hom(\mathbbm k^{n_i},\mathbbm k^{n_{i-1}})$ is injective we can embed $\mathbbm k^{n_i}$ into $\mathbbm k^{n_0}$ though $F_i=f_1\circ\cdots\circ f_{i-1}\circ f_i$ in such a way that $b_{1,2}^i=b_{1,2}^{0}|_{\mathbbm k^{n_{i}}\hookrightarrow\mathbbm k^{n_0}}$, which is a simple consequence of condition (ii). Then we have the direct sum decomposition $\mathbbm k^{n_0}=\mathbbm k^{n_0-n_i}\oplus\mathbbm k^{n_i}$,the restrictions $\tilde b_{1,2}^i=b_{1,2}^0|_{\mathbbm k^{n_0-n_i}}$ and the projection $\tilde\imath_i=\pi_i\circ i$, with $\pi_i=\mathbbm k^{n_0}\to\mathbbm k^{n_0-n_i}$, satisfying $[\tilde b_1^i,\tilde b_2^i]=0$ and a stability condition analogous to (iii). Thus we define $\phi_i:\mathbbm k[z_1,z_2]\to\mathbbm k^{n_0-n_i}$ by $\phi_i(f)=f(\tilde b_1^i,\tilde b_2^i)\tilde\imath(1)$. This map is surjective, just like $\phi_0$, so that $I_j=\ker(\phi_j)$ is an ideal for $\mathbbm k(z_1,z_2)$ of length $n_0-n_i$. Finally, due to the successive embeddings $\mathbbm k^{n_k}\hookrightarrow\mathbbm k^{n_{k-1}}\hookrightarrow\cdots\hookrightarrow\mathbbm k^{n_0}$ we have the inclusion of the ideals $I_j\subset I_{j-1}$.
\end{proof}

One can readily notice that the description given by the previous proposition of the nested Hilbert scheme of points doesn't really coincide with the quiver we were studying throughout this section. However we can very easily overcome this problem by using the fact that if $(b_1^0,b_2^0,i,j)$ is a stable ADHM datum with $r=1$, then $j=0$, \cite{nakajima_lectures}. This proves the following proposition.
\begin{proposition}\label{prop:adhm_2}
With the same notations of proposition \ref{prop:adhm_1}, we have that
\begin{displaymath}
\Hilb^{\hat{\bold n}}(\mathbb A^2)\simeq\left\{(b_1^0,b_2^0,i,b_1^1,b_2^1,f_1,\dots,b_1^k,b_2^k,f_k)\left|
\begin{aligned}
{\rm (a)}&\ [b_1^0,b_2^0]+ij=0\\
{\rm (a')}&\ [b_1^i,b_2^i]=0,\ i>0\\
{\rm (b)}&\ b_{1,2}^{i-1}f_i-f_ib_{1,2}^{i}=0\\
{\rm (c)}&\ jf_1=0\\
{\rm (d)}&\ \nexists S\subset\mathbbm k^{n_0}:b^0_{1,2}(S)\subset S\ {\rm and}\\
&\qquad\qquad\quad \im(i)\subset S\\
{\rm (e)}&\ f_i:\mathbbm k^{n_i}\to\mathbbm k^{n_{i-1}}\ {\rm is\ injective}
\end{aligned}
\right.\right\}\bigg/\mathcal G_{\bold n}.
\end{displaymath}
\end{proposition}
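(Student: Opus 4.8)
The plan is to deduce the statement directly from Proposition \ref{prop:adhm_1}, observing that the only difference between the two presentations is the extra morphism $j\in\Hom(\mathbbm k^{n_0},\mathbbm k)$ together with the modified relations (a) and (c); indeed (a'), (b), (d), (e) already coincide verbatim with (i)--(iv). Hence it suffices to show that on the locus cut out by (a)--(e) one automatically has $j=0$: once this is known, relation (a) collapses to $[b_1^0,b_2^0]=0$, which is the $i=0$ instance of (i), relation (c) becomes vacuous, and the remaining data together with the relations are literally those of Proposition \ref{prop:adhm_1}.

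To establish the vanishing of $j$, I would note that for any point satisfying (a)--(e) the sub-datum $(b_1^0,b_2^0,i,j)$ is a representation of the ordinary rank-one ADHM quiver: relation (a) reads $[b_1^0,b_2^0]+ij=0$ and the stability condition (d) is exactly the cyclicity requirement for an ADHM datum with $W=\mathbbm k$. By the fact recalled just before the statement (see \cite{nakajima_lectures}), every stable rank-one ADHM datum has $j=0$. Thus $j$ vanishes identically on the whole locus, and forgetting the $j$-coordinate identifies the set in the present proposition with the set in Proposition \ref{prop:adhm_1}.

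Finally I would promote this bijection to an isomorphism of the GIT quotients. The $\mathcal G_{\bold n}$-action sends $j\mapsto jg_0^{-1}$, so it preserves $\{j=0\}$ and the projection forgetting $j$ is $\mathcal G_{\bold n}$-equivariant, with inverse the inclusion $j=0$; it therefore descends to a morphism of quotients, and composing with the isomorphism of Proposition \ref{prop:adhm_1} gives the desired identification with $\Hilb^{\hat{\bold n}}(\mathbb A^2)$. The only non-formal input is the vanishing $j=0$, and the point I would check most carefully is that this holds \emph{scheme-theoretically} rather than merely on closed points, so that the two presentations agree as schemes and the forgetful map is a genuine isomorphism and not just a bijection of sets.
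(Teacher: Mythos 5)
Your argument is exactly the one the paper uses: the paper disposes of this proposition in a single remark, observing that the only new data are $j$ and relations (a), (c), and that stability of the rank-one ADHM datum $(b_1^0,b_2^0,i,j)$ forces $j=0$ by the standard fact from \cite{nakajima_lectures}, so the presentation collapses to that of Proposition \ref{prop:adhm_1}. Your additional care about the $\mathcal G_{\bold n}$-equivariance of the forgetful map and the scheme-theoretic (rather than merely set-theoretic) vanishing of $j$ goes slightly beyond what the paper writes down, but it is the same proof.
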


All the previous observations, together with corollary \ref{corollary_stability}, immediately prove the following theorem.

\begin{theorem}\label{thm:iso_nhs}
The moduli space of nested instantons $\mathcal N_{r,\lambda,n,\mu}$ is isomorphic as a scheme to the nested Hilbert scheme of points on $\mathbb C^2$ when $r=1$ and $\lambda=[1^1]$.
\begin{equation}
\mathcal N(1,\bold n)=\mathbb X_0\git_\chi\mathcal G\simeq\Hilb^{\hat{\bold n}}(\mathbb C^2).
\end{equation}
\end{theorem}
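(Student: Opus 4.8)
The plan is to observe that both sides of the asserted isomorphism are presented as quotients of the \emph{same} space of linear-algebra data by the \emph{same} reductive group $\mathcal G=\mathcal G_{\bold n}=\prod_{i}GL(V_i)$, so that the proof reduces to matching the two descriptions term by term and then checking that the two quotient scheme structures agree. Note first that the identification $\mathcal N(1,\bold n)=\mathcal N_{1,[1^1],n,\mu}$ is already supplied by Corollary \ref{corollary_stability}, so the genuine content is the isomorphism with $\Hilb^{\hat{\bold n}}(\mathbb C^2)$.

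First I would specialize the GIT construction of Section \ref{sec:moduli_space} to $r=1$: by Corollary \ref{corollary_stability} we have $\mathcal N(1,\bold n)=\mathbb X_0\git_\chi\mathcal G$, and in the relevant stability chamber a representation is stable iff it satisfies \textbf{S1} and \textbf{S2} of Proposition \ref{theta_stability}. I then invoke the corollary to Proposition \ref{theta_stability}, which forces $G^i=0$ for every $i$ on the stable locus, since $F^i$ is injective and $F^iG^i=0$. For $r=1$ the framing is $W=\mathbb C$ and \textbf{S2} says $(B_1^0,B_2^0,I,J)$ is a stable rank-one ADHM datum, which by the classical fact recalled just before Proposition \ref{prop:adhm_2} forces $J=0$. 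Hence the only data surviving on $\mathbb X_0$, up to the $\mathcal G$-action, are $(B_1^0,B_2^0,I,\{B_1^i,B_2^i,F^i\}_{i=1}^N)$.

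Next I would match these against Proposition \ref{prop:adhm_2} under the dictionary $B^0_{1,2}\leftrightarrow b^0_{1,2}$, $I\leftrightarrow i$, $B^i_{1,2}\leftrightarrow b^i_{1,2}$, $F^i\leftrightarrow f_i$, and $J=0\leftrightarrow j=0$. With this dictionary the nested ADHM relations $[B^0_1,B^0_2]+IJ=0$, $[B^i_1,B^i_2]=0$ and $B^i_{1,2}F^i-F^iB^{i+1}_{1,2}=0$ become exactly conditions (a), (a$'$), (b) of that proposition; condition (c), namely $jf_1=0$, is vacuous since $j=0$; \textbf{S1} is condition (e); and \textbf{S2} is condition (d). The groups $\mathcal G$ and $\mathcal G_{\bold n}$ and their actions coincide verbatim. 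Therefore $\mathbb X_0\git_\chi\mathcal G$ and the quotient in Proposition \ref{prop:adhm_2} are the same quotient of the same stable locus by the same group, and Proposition \ref{prop:adhm_2} (via Proposition \ref{prop:adhm_1}) identifies the latter with $\Hilb^{\hat{\bold n}}(\mathbb C^2)$.

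The one point requiring care---and where the word ``immediately'' conceals some work---is upgrading this identification from a bijection of closed points to an isomorphism of \emph{schemes}. The hard part will be ensuring the quotient scheme structures agree, and this I would settle using that the $\mathcal G$-action is free on the stable locus (established in Section \ref{sec:moduli_space}): consequently $\mathbb X_0\git_\chi\mathcal G$ is a geometric quotient carrying the quotient scheme structure, and the same holds for the presentation in Proposition \ref{prop:adhm_2}. Since the two stable loci sit inside isomorphic affine ambient spaces, are cut out by identical equations, and carry the same free $\mathcal G$-action, their geometric quotients are canonically isomorphic as schemes; composing with the scheme isomorphism of Propositions \ref{prop:adhm_1}--\ref{prop:adhm_2} to $\Hilb^{\hat{\bold n}}(\mathbb C^2)$ then yields the claim. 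I would finally remark that the full functorial (fine-moduli) justification of this scheme isomorphism is subsumed by the general flag construction of Section \ref{sec2}, of which the present statement is the $r=1$, $\lambda=[1^1]$ specialization.
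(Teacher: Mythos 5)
Your proposal is correct and follows essentially the same route as the paper: the paper's own proof is precisely the observation that Corollary \ref{corollary_stability} together with Propositions \ref{prop:adhm_1} and \ref{prop:adhm_2} (with $G^i=0$ on the stable locus and $J=0$ for stable rank-one ADHM data) immediately identify the two quotients. Your additional care about the quotient scheme structures only makes explicit what the paper leaves implicit in its GIT construction.
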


\subsection{$\mathcal F(r,\boldsymbol\gamma)$ and $\mathcal N(r,\bold n)$}
A more general result relates the moduli space of flags of framed torsion-free sheaves on $\mathbb P^2$ to the moduli space of nested instantons. In the case of the two-step quiver this result was proved in \cite{flach_jardim}, here we give a generalization of their theorem in the case of the moduli space $\mathcal N_{r,[r^1],n,\mu}$ represented by a quiver with an arbitrary number of nodes.

\begin{definition}
Let $\ell_\infty\subset\mathbb P^2$ be a line and $F$ a coherent sheaf on $\mathbb P^2$. A framing $\phi$ for $F$ is then a choice of an isomorphism $\phi:F|_{\ell_\infty}\xrightarrow{\simeq}\mathcal O_{\ell_\infty}^{\oplus r}$, with $r=\rk F$. An $(N+2)-$tuple $(E_0,E_1,\dots,E_N,\phi)$ is a framed flag of sheaves on $\mathbb P^2$ if $E_0$ is a torsion-free (coherent) sheaf on $\mathbb P^2$ framed at $\ell_\infty$ by $\phi$, and $E_{j>0}$ form a flag of subsheaves $E_N\subseteq\cdots\subseteq E_0$ of $E_0$ s.t. the quotients $E_i/E_j$, $i<j$, are supported away from $\ell_\infty$.
\end{definition}

By the framing condition we get that $c_1(E_0)=0$, while the quotient condition on the subsheaves of $E_0$ naturally implies that the quotients $E_j/E_N$ are $0-$dimensional subsheaves and $c_1(E_{j>0})=0$. Then a framed flag of sheaves on $\mathbb P^2$ is characterized by the set of integers $(r,\boldsymbol\gamma)$, where $r=\rk E_0=\cdots=\rk E_N$, $c_2(E_0)=\gamma_0$, $h^0(E_0/E_j)=\gamma_1+\cdots+\gamma_j$ so that $c_2(E_{j>0})=\gamma_0+\cdots+\gamma_j$.

We now define the moduli functor
\begin{equation}
\mathsf F_{(r,\boldsymbol\gamma)}:\Sch_{\mathbb C}^{\rm op}\to\Sets,
\end{equation}
by assigning to a $\mathbb C-$scheme $S$ the set
$$\mathsf F_{(r,\boldsymbol\gamma)}(S)=\{{\rm isomorphism\ classes\ of\ }(2N+2)-{\rm tuples}\  (F_S,\varphi_S,Q^1_S,g^1_S,\dots,Q^N_S,g^N_S\}$$
with
\begin{itemize}
\item $F_S$ a coherent sheaf over $\mathbb P^2\times S$ flat over $S$ and such that $F_S|_{\mathbb P^2\times\{s\}}$ is a torsion-free sheaf for any closed point $s\in S$, $\rk F_S=r$, $c_1(F_S)=0$ and $c_2(F_S)=\gamma_0$;
\item $\varphi_S:F_S|_{\ell_\infty\times S}\to\mathcal O^{\oplus r}_{\ell_\infty\times S}$ is an isomorphism of $\mathcal O_{\ell_\infty\times S}-$modules;
\item $Q^i_S$ is a coherent sheaf on $\mathbb P^2\times S$, flat over $S$ and supported away from $\ell_\infty\times S$, such that $h^0(Q_S^i|_{\mathbb P^2\times\{s\}})=\gamma_1+\cdots+\gamma_i$, for any closed point $s\in S$;
\item $g^i_S:F_S\to Q^i_S$ is a surjective morphism of $\mathcal O_{\mathbb P^2\times S}-$modules.
\end{itemize}

Two tuples $(F_S,\varphi_S,Q_S^1,g_S^1,\dots,Q_S^N,g_S^N)$ and $(F_S',\varphi_S',Q_S^{1\prime},g_S^{1\prime},\dots,Q_S^{N\prime},g_S^{N\prime})$ are said to be isomorphic if there exist isomorphisms of $\mathcal O_{\mathbb P^2\times S}-$modules $\Theta_S:F_S\to F_S'$ and $\Gamma_S^i:Q_S^i\to Q_S^{i\prime}$ such that the following diagrams commute
\begin{equation}
\begin{tikzcd}
F_S|_{\ell_\infty\times S}\arrow{d}[left]{\Theta_S|_{\ell_\infty\times S}}\arrow[r,"\varphi_S"] & \mathcal O^{\oplus r}_{\ell_\infty\times S}\\
F_S'|_{\ell_\infty\times S}\arrow{ur}[right]{\varphi_S'}
\end{tikzcd}\qquad\qquad
\begin{tikzcd}
F_S\arrow[d,"\Theta_S"]\arrow[r,"g_S^i"] & Q_S^i\arrow[d,"\Gamma_S^i"]\\
F_S'\arrow[r,"g_S^{i\prime}"] & Q_S^{i\prime}
\end{tikzcd}
\end{equation}

If this functor is representable, the variety representing it will called the moduli space of flags of framed torsion-free sheaves on $\mathbb P^2$.

What we want to show next is that the moduli space of flags of torsion free sheaves on $\mathbb P^2$ is a fine moduli space, and that it is indeed isomorphic (as a scheme) to the moduli space of nested instantons we defined previously. First of all we will focus our attention on proving the following statement.
\begin{proposition}
The moduli functor $\mathsf F_{(r,\boldsymbol\gamma)}$ is represented by a (quasi-projective) variety $\mathcal F(r,\boldsymbol\gamma)$ isomorphic to a relative quot-scheme.
\end{proposition}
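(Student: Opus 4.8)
The plan is to construct $\mathcal F(r,\boldsymbol\gamma)$ as a tower of relative Quot-schemes erected over the classical framed moduli space, and then to verify its universal property by Yoneda. First I would invoke the fact (due to Nakajima, \cite{nakajima_lectures}) that the functor of framed torsion-free sheaves on $\mathbb P^2$ with invariants $r$, $c_1=0$, $c_2=\gamma_0$ is represented by a quasi-projective variety $\mathcal M(r,\gamma_0)$ carrying a universal framed sheaf $(\mathcal F,\varphi)$ on $\mathbb P^2\times\mathcal M(r,\gamma_0)$; this accounts for the pair $(F_S,\varphi_S)$ appearing in every tuple. The remaining flag data $(Q^i_S,g^i_S)$ I would read, after passing to kernels $E_i=\ker g^i$, as a nested chain of subsheaves $E_N\subseteq\cdots\subseteq E_1\subseteq F_S$ with $\operatorname{length}(E_{i-1}/E_i)=\gamma_i$ and all successive quotients supported away from $\ell_\infty\times S$.

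Given this reinterpretation I would build $\mathcal F(r,\boldsymbol\gamma)$ by iterating Grothendieck's relative Quot construction. Setting $\mathcal M_0=\mathcal M(r,\gamma_0)$ and $\mathcal E_0=\mathcal F$, suppose inductively that we have a base $\mathcal M_{i-1}$ over $\mathcal M_0$ and a flat family $\mathcal E_{i-1}$ of fibrewise torsion-free sheaves on $\mathbb P^2\times\mathcal M_{i-1}$. Let $\mathcal M_i$ be the open locus of the relative Quot-scheme $\quot^{\gamma_i}(\mathcal E_{i-1})$ over $\mathcal M_{i-1}$ on which the universal length-$\gamma_i$ quotient $\mathcal T_i$ is supported away from $\ell_\infty$, and put $\mathcal E_i=\ker(\mathcal E_{i-1}\twoheadrightarrow\mathcal T_i)$. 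Two elementary facts keep the induction alive: in $0\to\mathcal E_i\to\mathcal E_{i-1}\to\mathcal T_i\to 0$ with $\mathcal E_{i-1}$ and $\mathcal T_i$ flat over the base the kernel $\mathcal E_i$ is again flat (vanishing of the relative $\operatorname{Tor}_1$), and flatness of $\mathcal T_i$ makes the sequence restrict to an exact sequence on each fibre, so that $\mathcal E_i|_{\mathbb P^2\times\{s\}}$ is a subsheaf of the torsion-free sheaf $\mathcal E_{i-1}|_{\mathbb P^2\times\{s\}}$ and hence torsion-free, with $c_1=0$ and $c_2=\gamma_0+\cdots+\gamma_i$. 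After $N$ steps I set $\mathcal F(r,\boldsymbol\gamma):=\mathcal M_N$; it carries a universal flag $\mathcal E_N\subseteq\cdots\subseteq\mathcal E_1\subseteq\mathcal F$ from which the universal cumulative quotients $Q^i=\mathcal F/\mathcal E_i$ and surjections $g^i$ are recovered. Since each relative Quot-scheme is projective over its base and $\mathcal M_0$ is quasi-projective, $\mathcal M_N$ is quasi-projective, and may be regarded as a (nested) relative Quot-scheme over $\mathcal M_0$.

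It then remains to exhibit a natural isomorphism $\mathsf F_{(r,\boldsymbol\gamma)}\simeq\Hom_{\Sch_{\mathbb C}}(-,\mathcal M_N)$. Given $S$ and a tuple, the framed sheaf $(F_S,\varphi_S)$ yields a unique classifying morphism $f:S\to\mathcal M_0$ with $F_S\simeq(\id\times f)^*\mathcal F$; the quotient $E_0/E_1$ extracted from $g^1$ is then a length-$\gamma_1$ quotient of the pulled-back $\mathcal F$ supported away from $\ell_\infty$, giving a unique lift $S\to\mathcal M_1$ by the universal property of the first Quot-scheme, and inductively each successive quotient produces a unique lift $S\to\mathcal M_i$. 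Conversely, pulling the universal flag back along any $S\to\mathcal M_N$ produces such a tuple, and the two assignments are mutually inverse and natural in $S$; by Yoneda $\mathcal M_N$ represents $\mathsf F_{(r,\boldsymbol\gamma)}$, which is therefore a fine moduli space.

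The hard part will be the bookkeeping that reconciles the functor as written --- which records the cumulative quotients $(Q^i_S,g^i_S)$ --- with the successive-kernel tower, since this requires the flag (nesting) to be genuinely present in the data, i.e.\ the compatibilities $g^{i-1}=h^i\circ g^i$, and requires translating the colengths $\gamma_1+\cdots+\gamma_i$ of the $Q^i$ into the colengths $\gamma_i$ of the successive quotients $\mathcal T_i$. The other delicate points are purely technical but must be handled with care: fibrewise torsion-freeness and the framing isomorphism along $\ell_\infty$ are open conditions, while ``supported away from $\ell_\infty$'' carves out an open locus of each Quot-scheme, so at every stage one works over an open subscheme; and one must confirm that the universal kernels are honestly flat families to which Grothendieck's representability theorem applies afresh at the next step. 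Once these are in place the stage is set for the comparison with $\mathcal N(r,\bold n)$ carried out in the following theorem.
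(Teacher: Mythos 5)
Your proposal is correct and follows essentially the same route as the paper: both build $\mathcal F(r,\boldsymbol\gamma)$ as an iterated relative Quot-scheme over $\mathcal M(r,\gamma_0)$, taking at each stage the universal (framed) subsheaf as the new family to be quotiented and invoking Grothendieck representability. Your version is somewhat more explicit about the flatness of the successive kernels, the openness of the ``supported away from $\ell_\infty$'' condition, and the translation between cumulative and successive quotients, all of which the paper leaves implicit.
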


\begin{proof}
We base our proof on the concept of $\Quot$ functor, so let us recall its constuction and basic properties. First of all let us take the universal framed sheaf $(U^{(0)},\varphi_0)$ on $\mathbb P^2\times\mathcal M(r,\gamma_0)$, with $\varphi_0:U^{(0)}_{\ell_\infty\times\mathcal M(r,\gamma_0)}\xrightarrow{\simeq}\mathcal O^{\oplus r}_{\ell_\infty\times\mathcal M(r,\gamma_0)}$ an isomorphism of $\mathcal O_{\ell_\infty\times\mathcal M(r,\gamma_0)}-$modules. We then define
\begin{equation}
\Quot_{(U^{(0)},\gamma_1)}:\Sch^{\rm op}_{\mathcal M(r,\gamma_0)}\to\Sets
\end{equation}
by
\begin{equation}
\Quot_{(U^{(0)},\gamma_1)}(S)=\{{\rm isomorphism\ classes\ of\ }(Q_S,q_S)\}
\end{equation}
where
\begin{itemize}
\item $Q_S$ is a torsion-free sheaf on $\mathbb P^2\times S$, flat over $S$, supported away from $\ell_\infty\times S$ and such that $h^0(Q_S|_{\ell_\infty\times\{s\}})=\gamma_1$, for any $s\in S$ closed;
\item $q_S:U^{(0)}\to Q_S$, defined by $q_S:=(\mathbbm 1_{\mathbb P^2}\times\pi)^*$, where $\pi:S\to\mathcal M(r,\gamma_0)$, is a surjective morphism of $\mathcal O_{\mathbb P^2\times S}-$modules.
\end{itemize}
By Grothendieck theory this is a representable functor and it was proved in \cite{flach_jardim} to be isomorphic to the moduli functor of flags of couples of framed torsion-free sheaves on $\mathbb P^2$. In fact there exist a natural forgetting map $\mathsf F_{(r,\gamma_0,\gamma_1)}\to\Quot_{(U^{(0)},\gamma_1)}$ which act as $(F_S,\varphi_S,Q_S^1,g_S^1)\mapsto(Q_S^1,g_S^1)$. This map also has an inverse given by setting $F_S=\ker(g^1|_S)$, which has a framing $\varphi_S$ at $\ell_\infty\times S$ induced by the framing $\varphi_0$ of $U^{(0)}$ at $\ell_\infty\times\mathcal M(r,\gamma_0)$. The variety representing $\mathsf F_{(r,\gamma_0,\gamma_1)}$ is then the quot scheme $\quot^{\gamma_1}(U^{(0)})$ relative to $\mathcal M(r,\gamma_0)$. We can then construct a universal framed sheaf $(U^{(1)},\varphi_1)$ on $\mathbb P^2\times\mathcal F(r,\gamma_1,\gamma_2)$ with $\varphi_0:U^{(1)}_{\ell_\infty\times\mathcal F(r,\gamma_1,\gamma_2)}\xrightarrow{\simeq}\mathcal O^{\oplus r}_{\ell_\infty\times\mathcal F(r,\gamma_0,\gamma_1)}$ an isomorphism of $\mathcal O_{\ell_\infty\times\mathcal F(r,\gamma_0,\gamma_1)}-$modules. One can then use the quot functor
\begin{equation}
\Quot_{(U^{(1)},\gamma_2)}:\Sch^{\rm op}_{\mathcal F(r,\gamma_1,\gamma_2)}\to\Sets,
\end{equation}
in order to show that $\mathsf F_{(r,\gamma_0,\gamma_1,\gamma_2)}$ is isomorphic to $\Quot_{(U^{(1)},\gamma_2)}$, exactly in the same way as before and since the latter is representable so is the former. By iterating this procedure we can finally show that our moduli functor $\mathsf F_{(r,\boldsymbol\gamma)}$ is indeed representable, being isomorphic to a quot functor $\Quot_{(U^{(N-1)},\gamma_N)}$. The moduli space of flags of framed torsion-free sheaves on $\mathbb P^2$, $\mathcal F(r,\boldsymbol\gamma)$, is then a fine moduli space isomorphic to the relative quot-scheme $\quot^{\gamma_N}(U^{(N-1)})$.
\end{proof}

\begin{remark}
The previous description of the moduli space of framed flags of sheaves on $\mathbb P^2$ suggests we could also take a slightly different perspective on $\mathcal F(r,\boldsymbol\gamma)$, namely as the moduli of the sequence of quotients
\[
Z_N\hookrightarrow\cdots\hookrightarrow Z_1\hookrightarrow F\twoheadrightarrow Q_1\twoheadrightarrow\cdots\twoheadrightarrow Q_N,
\]
where $F$ is a vector bundle. In this sense $\mathcal F(r,\boldsymbol\gamma)$ seems to be analogous to the Filt-scheme studied by Mochizuki in \cite{mochizuki_filt} in the case of curves.
\end{remark}

Now that we proved that the definition of moduli space of framed flags of sheaves on $\mathbb P^2$ is indeed a good one we are ready to tackle the problem of showing that there exists an isomorphism between this moduli space and the space of stable representation of the nested instantons quiver we studied in the previous sections. First of all let us point out that our definition of flags of framed torsion-free sheaves reduce in the rank $1$ case to the nested Hilbert scheme of points on $\mathbb C^2$, and the isomorphism we are interested in was showed to exist in theorem \ref{thm:iso_nhs} of section \ref{sec:nhs_iso}. This is in fact compatible with the statement of the following theorem \ref{thm:iso_flags}.

\begin{theorem}\label{thm:iso_flags}
The moduli space of stable representations of the nested ADHM quiver is a fine moduli space isomorphic to the moduli space of flags of framed torsion-free sheaves on $\mathbb P^2$: $\mathcal F(r,\boldsymbol\gamma)\simeq\mathcal N(r,\bold n)$, as schemes, where $n_i=\gamma_i+\cdots+\gamma_N$.
\end{theorem}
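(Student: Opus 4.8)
\emph{Strategy.} The plan is to realise the two fine moduli problems as the same functor by building mutually inverse morphisms with the (relative) ADHM/monad machinery. The underlying dictionary is the classical Beilinson monad description of a single framed torsion-free sheaf on $\mathbb P^2$, see \cite{nakajima_lectures}: a framed sheaf $E$ of rank $r$, $c_1=0$, $c_2=c$ is the middle cohomology of a monad whose middle term is built from $V=H^1(E(-1))$ (of dimension $c$) and the framing space $W$, and this is an equivalence between such sheaves and stable ADHM data $(B_1^0,B_2^0,I,J)$ on $(V,W)$. The numerics match the flag: with $n_i=\gamma_i+\cdots+\gamma_N$ one has $\dim V_0=n_0=c_2(E_N)$ and $\dim\bigl(V_0/\im(F^1\cdots F^i)\bigr)=n_0-n_i=c_2(E_{i-1})$, so that the total space $V_0$ will carry the data of the deepest subsheaf $E_N$ while the successive quotients carry the data of $E_{N-1},\dots,E_0$.

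\emph{From quiver data to flags.} Given a stable nested ADHM datum, condition \textbf{S1} of Proposition~\ref{theta_stability} makes each $F^i$ injective, and the construction already carried out in Section~\ref{sec:moduli_space} produces, for every $i$, a \emph{stable} ADHM datum on $\tilde V_i=V_0/\im(F^1\cdots F^i)$ (the maps $\mathfrak p_i^{(N)}$). Applying the monad construction nodewise turns these into framed torsion-free sheaves $E_{i-1}$ with $c_2(E_{i-1})=n_0-n_i$, together with $E_N$ coming from $V_0$ itself. Since each projection $V_0\twoheadrightarrow\tilde V_i$ intertwines the ADHM data by construction, it induces a morphism of the associated monads and hence a sheaf morphism $E_N\to E_{i-1}$; the quotients are $0$-dimensional and supported away from $\ell_\infty$, the framings being identified through the inclusions, so these assemble into a flag $E_N\subseteq\cdots\subseteq E_0$ of the prescribed numerical type. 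Performing this over the universal nested ADHM datum on $\mathcal N(r,\bold n)$ yields a family, i.e. an element of $\mathsf F_{(r,\boldsymbol\gamma)}\bigl(\mathcal N(r,\bold n)\bigr)$, and therefore, since $\mathcal F(r,\boldsymbol\gamma)$ represents $\mathsf F_{(r,\boldsymbol\gamma)}$, a morphism $\mathcal N(r,\bold n)\to\mathcal F(r,\boldsymbol\gamma)$.

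\emph{From flags to quiver data.} Conversely, starting from the universal flag on $\mathbb P^2\times\mathcal F(r,\boldsymbol\gamma)$, I would set $\mathcal V_i=R^1\pi_*\bigl(E_i(-1)\bigr)$, which is locally free of rank $c_2(E_i)$ by cohomology-and-base-change once one checks the vanishing of the relevant $H^0$ and $H^2$ for framed torsion-free sheaves; the framing provides $W$ and the relative Beilinson monad provides $(B_1^0,B_2^0,I,J)$ on $\mathcal V_N$. Each inclusion $E_i\hookrightarrow E_{i-1}$ has $0$-dimensional quotient $T_i$ of length $\gamma_i$, and the long exact sequence of $R\pi_*(-(-1))$ gives short exact sequences $0\to\pi_*T_i\to\mathcal V_i\to\mathcal V_{i-1}\to0$, that is, surjections of the cohomology bundles compatible with the ADHM data. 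Recording the kernels of $\mathcal V_N\twoheadrightarrow\mathcal V_{i-1}$ as locally free sheaves $V_i$ of rank $n_i$ and the resulting inclusions as the maps $F^i$, one obtains a relative nested ADHM datum satisfying the nested ADHM equations and the stability conditions \textbf{S1}, \textbf{S2}; as $\mathcal N(r,\bold n)$ is a fine moduli space this defines a morphism $\mathcal F(r,\boldsymbol\gamma)\to\mathcal N(r,\bold n)$.

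\emph{Conclusion and main difficulty.} On closed points the two constructions are inverse to one another: this is the classical ADHM bijection of \cite{nakajima_lectures} applied at each node, together with the equivalence ``sheaf inclusion with $0$-dimensional quotient $\Leftrightarrow$ ADHM-equivariant surjection of $H^1$-spaces'', which specialises in rank one to Theorem~\ref{thm:iso_nhs}. Because both assignments were made over the respective universal families, the induced morphisms are mutually inverse as morphisms of schemes, and transporting the universal flag shows $\mathcal N(r,\bold n)$ is itself fine. I expect the genuine obstacle to lie in the relative statement rather than in the pointwise bijection: one must verify that $R^1\pi_*(E_i(-1))$ is locally free of the expected rank and commutes with base change, that the middle cohomology of the relative monad is $S$-flat with torsion-free fibres, and that the comparison maps between the monads are locally split so that the $V_i$ and the $F^i$ vary algebraically. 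In other words, the heart of the proof is promoting the nodewise classical ADHM equivalence to an isomorphism of the representing functors $\mathsf F_{(r,\boldsymbol\gamma)}\cong h_{\mathcal N(r,\bold n)}$.
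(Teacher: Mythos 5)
Your proposal follows essentially the same route as the paper: both directions are obtained by applying the classical ADHM/monad correspondence nodewise — using the stable quotient data $(W,\tilde V_i,\dots)$ already built in Section~\ref{sec:moduli_space} and the exact sequences of monads to produce the flag, and conversely taking $H^1(E_j(-1))$ and the surjections induced by the inclusions $E_N\hookrightarrow E_j$ to recover the nested quiver datum. Your remarks on the relative statement (local freeness of $R^1\pi_*(E_i(-1))$, base change, flatness of the family) are a fair identification of what the paper's pointwise argument leaves implicit, but the underlying construction is the same.
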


\begin{proof}
We first want to show how, starting from an element of $\mathcal N(r,n_0,\dots,n_N)$ one can construct a flag of framed torsion-free sheaves on $\mathbb P^2$. As we showed previously, to each $(V_i,B_1^i,B_2^i,F^i)$ in the datum of $X\in\mathcal N(r,n_0,\dots,n_N)$ we can associate a stable ADHM datum $(W,\widetilde V_i,\widetilde B_1^i,\widetilde B_2^i,\tilde I^i,\tilde J^i)$, fitting in the diagram \eqref{diag_sheaves}
\begin{equation}\label{diag_sheaves}
\begin{tikzcd}[row sep=small, column sep=small]
V_1 \arrow[rr,"F^1"]\arrow[dr,shift left=.5ex,dashed] & & V_0 \arrow[rr]\arrow[dr,shift left=.5ex,dashed] & & \widetilde V_1\arrow[dr,shift left=.5ex,dashed] & \\
 & \{0\}\arrow[ul,shift left=.5ex,dashed]\arrow[rr,dashed]\arrow[dd,dashed] & & W\arrow[dd,dashed]\arrow[ul,shift left=.5ex,dashed]\arrow[rr,dashed] & & W\arrow[dd,dashed]\arrow[ul,shift left=.5ex,dashed] \\
V_2 \arrow[rr,crossing over]\arrow[dr,shift left=.5ex,dashed]\arrow[uu,"F^2"] & & V_0 \arrow[rr,crossing over]\arrow[from=uu,crossing over]\arrow[dr,shift left=.5ex,dashed] & & \widetilde V_2 \arrow[from=uu,crossing over]\arrow[dr,shift left=.5ex,dashed] & \\
 & \{0\}\arrow[ul,shift left=.5ex,dashed]\arrow[rr,dashed]\arrow[dd,dashed] & & W\arrow[dd,dashed]\arrow[ul,shift left=.5ex,dashed]\arrow[rr,dashed] & & W\arrow[dd,dashed]\arrow[ul,shift left=.5ex,dashed] \\
\vdots \arrow[rr,crossing over]\arrow[dr,shift left=.5ex,dashed,gray]\arrow[uu,"F^3"] & & \vdots \arrow[rr,crossing over]\arrow[from=uu,crossing over]\arrow[dr,shift left=.5ex,dashed,gray] & & \vdots \arrow[from=uu,crossing over]\arrow[dr,shift left=.5ex,dashed,gray] & \\
 & \textcolor{gray}{\{0\}}\arrow[ul,shift left=.5ex,dashed,gray]\arrow[rr,dashed,gray]\arrow[dd,dashed] & & \textcolor{gray}{W}\arrow[dd,dashed]\arrow[ul,shift left=.5ex,dashed,gray]\arrow[rr,dashed,gray] & & \textcolor{gray}{W}\arrow[dd,dashed]\arrow[ul,shift left=.5ex,dashed,gray] \\
V_N \arrow[rr,crossing over]\arrow[dr,shift left=.5ex,dashed]\arrow[uu,"F^N"] & & V_0 \arrow[rr,crossing over]\arrow[from=uu,crossing over]\arrow[dr,shift left=.5ex,dashed] & & \widetilde V_N \arrow[from=uu,crossing over]\arrow[dr,shift left=.5ex,dashed] & \\
 & \{0\}\arrow[ul,shift left=.5ex,dashed]\arrow[rr,dashed] & & W\arrow[ul,shift left=.5ex,dashed]\arrow[rr,dashed] & & W\arrow[ul,shift left=.5ex,dashed] \\
\end{tikzcd}
\end{equation}
where we suppressed all of the endomorphisms $B_{1,2}^i$, $\widetilde B_{1,2}^j$. We will then call $\bold Z_i$, $\bold S$ and $\bold Q_i$ the representations of the ADHM data $(\{0\},V_i,B_1^i,B_2^i)$, $(W,V_0,B_1^0,B_2^0,I,J)$ and $(W,\widetilde V_i,\widetilde B_1^i,\widetilde B_2^i,\tilde I^i,\tilde J^i)$, respectively. The the diagram \eqref{diag_sheaves} can be restated in the following form:
\begin{equation}\label{seq_reps}
\begin{tikzcd}
0\arrow[r]\arrow[d] & \bold Z_1\arrow[r] & \bold S\arrow[d]\arrow[r] & \bold Q_1\arrow[d]\arrow[r] & 0\arrow[d]\\
0\arrow[r]\arrow[d] & \bold Z_2\arrow[r]\arrow[u] & \bold S\arrow[d]\arrow[r] & \bold Q_2\arrow[d]\arrow[r] & 0\arrow[d]\\
\vdots\arrow[r]\arrow[d] & \vdots\arrow[r]\arrow[u] & \vdots\arrow[d]\arrow[r] & \vdots\arrow[d]\arrow[r] & \vdots\arrow[d]\\
0\arrow[r] & \bold Z_N\arrow[r]\arrow[u] & \bold S\arrow[r] & \bold Q_N\arrow[r] & 0\\
\end{tikzcd}
\end{equation}
Moreover, if $E^\bullet_{\bold Z_i}$, $E^\bullet_{\bold S}$ and $E^\bullet_{\bold Q_i}$ denotes the ADHM complex corresponding to $\bold Z_i$, $\bold S$ and $\bold Q_i$ the diagram \eqref{seq_reps} induces the following
\begin{equation}\label{seq_compl}
\begin{tikzcd}
0\arrow[r]\arrow[d] & E^\bullet_{\bold Z_1}\arrow[r] & E^\bullet_{\bold S}\arrow[d]\arrow[r] & E^\bullet_{\bold Q_1}\arrow[d]\arrow[r] & 0\arrow[d]\\
0\arrow[r]\arrow[d] & E^\bullet_{\bold Z_2}\arrow[r]\arrow[u] & E^\bullet_{\bold S}\arrow[d]\arrow[r] & E^\bullet_{\bold Q_2}\arrow[d]\arrow[r] & 0\arrow[d]\\
\vdots\arrow[r]\arrow[d] & \vdots\arrow[r]\arrow[u] & \vdots\arrow[d]\arrow[r] & \vdots\arrow[d]\arrow[r] & \vdots\arrow[d]\\
0\arrow[r] & E^\bullet_{\bold Z_N}\arrow[r]\arrow[u] & E^\bullet_{\bold S}\arrow[r] & E^\bullet_{\bold Q_N}\arrow[r] & 0\\
\end{tikzcd}
\end{equation}
Then, since $\bold S$ and $\bold Q_i$ are stable one has that $H^p(E^\bullet_{\bold S})=H^p(E^\bullet_{\bold Q_i})=0$, for $p=-1,1$, so that for each line in \eqref{seq_compl} the long exact sequence for the cohomology associated to it reduces to:
\begin{equation}
0\longrightarrow H^0(E^\bullet_{\bold S})\longrightarrow H^0(E^\bullet_{\bold Q_i})\longrightarrow H^1(E^\bullet_{\bold Z_i})\longrightarrow 0,
\end{equation}
and by the ADHM construction $(H^0(E^\bullet_{\bold Q_i}),\varphi)$ is a rank $r$ framed torsion-free sheaf on $\mathbb P^2$, with framing $\varphi:H^0(E^\bullet_{\bold Q_i})|_{\ell_\infty}\xrightarrow{\simeq}W\otimes\mathcal O_{\ell_\infty}$. Moreover $H^0(E^\bullet_{\bold S})$ is a subsheaf of $H^0(E^\bullet_{\bold Q_i})$, and $H^1(E^\bullet_{\bold Z_i})$ is a quotient sheaf
$$
H^1(E^\bullet_{\bold Z_i})\simeq H^0(E^\bullet_{\bold Q_i})/H^0(E^\bullet_{\bold S}),
$$
which is $0-$dimensional and supported away from $\ell_\infty\subset\mathbb P^2$. Finally one can immediately see from \eqref{seq_compl} that $H^0(E^\bullet_{\bold Q_i})$ is a subsheaf of $H^0(E^\bullet_{\bold Q_{i+1}})$. One can moreover check that the numerical invariants classifying flags of sheaves do agree with the statement of the theorem.

Conversely let $(E_0,\dots,E_N,\varphi)$ be a flag of framed torsion-free sheaves on $\mathbb P^2$ such that $\rk E_j=r$, $c_2(E_0)=\gamma_0$, $h^0(E_0/E_{j>0})=\gamma_1+\cdots+\gamma_j$. By definition each $(E_j,\varphi)$ defines a stable ADHM datum $\bold Q_j=(\widetilde W_j,\widetilde V_j,\widetilde B_1^j,\widetilde B_2^j,\tilde I^j,\tilde J^j)$ (with the convention of calling $\bold S=\bold Q_N$), since it can be identified with a framed torsion-free sheaf on $\mathbb P^2$, with $\rk E_j=r$, $c_2(E_j)=\gamma_0+\cdots+\gamma_j$. Moreover we have the inclusion $E_0\hookrightarrow E_j$, which induces an epimorphism $\Psi_j:\bold S\to\bold Q_j$. In fact, we can construct vector spaces $V_0$, $\widetilde V_j$, $W$ and $\widetilde W_j$ as in \cite{nakajima_lectures}, so that
\begin{equation}
V_0\simeq H^0(E_N(-1)),\quad \widetilde V_j\simeq H^0(E_j(-1)),\quad W\simeq H^0\left(E_N|_{\ell_\infty}\right),\quad \widetilde W_j\simeq H^0\left(E_j|_{\ell_\infty}\right),
\end{equation}
and by the fact that the quotient sheaf $E_j/E_N$ is $0-$dimensional and supported away from $\ell_\infty$ we can construct an isomorphism
\begin{equation}
\Psi_{j,2}:H^0\left(E_N|_{\ell_\infty}\right)\xrightarrow{\simeq}H^0\left(E_j|_{\ell_\infty}\right).
\end{equation}
Finally we have the exact sequence
\begin{displaymath}
0\longrightarrow E_N\longrightarrow E_j\longrightarrow E_j/E_N\longrightarrow 0,
\end{displaymath}
which induces the following exact sequence of cohomology, thanks to the fact that $H^0(E_j(-1))=0$, being that $E_j$ is a framed torsion-free $\mu-$semistable sheaf with $c_1(E_j)=0$ (due to the standard ADHM construction), and $H^1(E_j/E_N(-1))=0$, since the quotient sheaf $E_j/E_N$ is $0-$dimensional,
\begin{equation}
0\longrightarrow H^0(E_j/E_N(-1))\longrightarrow H^1(E_N(-1))\xrightarrow{\Psi_{j,1}} H^1(E_j(-1))\longrightarrow 0.
\end{equation}
The morphism $\Psi_j=(\Psi_{j,1},\Psi_{j,2})$ is then an epimorphism, since both $\Psi_{j,1}$ and $\Psi_{j,2}$ are surjective. Taking into account the flag structure of the datum $(E_0,\dots,E_N,\varphi)$, the sequences
\begin{equation}
\begin{tikzcd}
0\arrow[r]\arrow[d] & \ker\Psi_{N-1}\arrow[r] & \bold S\arrow[d]\arrow[r] & \bold Q_1\arrow[d]\arrow[r] & 0\arrow[d]\\
0\arrow[r]\arrow[d] & \ker\Psi_{N-2}\arrow[r]\arrow[u] & \bold S\arrow[d]\arrow[r] & \bold Q_2\arrow[d]\arrow[r] & 0\arrow[d]\\
\vdots\arrow[r]\arrow[d] & \vdots\arrow[r]\arrow[u] & \vdots\arrow[d]\arrow[r] & \vdots\arrow[d]\arrow[r] & \vdots\arrow[d]\\
0\arrow[r] & \ker\Psi_{0}\arrow[r]\arrow[u] & \bold S\arrow[r] & \bold Q_N\arrow[r] & 0\\
\end{tikzcd}
\end{equation}
give us $(N+1)$ stable ADHM data fitting in the following diagram.
\begin{equation}
\begin{tikzcd}[row sep=small, column sep=small]
V_1 \arrow[rr]\arrow[dr,shift left=.5ex,dashed] & & V_0 \arrow[rr,red,"\Psi_{N-1,1}" pos=.2]\arrow[dr,shift left=.5ex,dashed] & & \widetilde V_1\arrow[dr,shift left=.5ex,dashed] & \\
 & \{0\}\arrow[ul,shift left=.5ex,dashed]\arrow[rr,dashed]\arrow[dd,dashed] & & W\arrow[dd,dashed]\arrow[ul,shift left=.5ex,dashed]\arrow[rr,dashed,blue,"\Psi_{N-1,2}" pos=.2] & & W\arrow[dd,dashed]\arrow[ul,shift left=.5ex,dashed] \\
V_2 \arrow[rr,crossing over]\arrow[dr,shift left=.5ex,dashed]\arrow[uu] & & V_0 \arrow[rr,red,crossing over,"\Psi_{N-2,1}" pos=.2]\arrow[from=uu,crossing over]\arrow[dr,shift left=.5ex,dashed] & & \widetilde V_2 \arrow[from=uu,crossing over]\arrow[dr,shift left=.5ex,dashed] & \\
 & \{0\}\arrow[ul,shift left=.5ex,dashed]\arrow[rr,dashed]\arrow[dd,dashed] & & W\arrow[dd,dashed]\arrow[ul,shift left=.5ex,dashed]\arrow[rr,dashed,blue,"\Psi_{N-2,2}" pos=.2] & & W\arrow[dd,dashed]\arrow[ul,shift left=.5ex,dashed] \\
\vdots \arrow[rr,crossing over]\arrow[dr,shift left=.5ex,dashed,gray]\arrow[uu] & & \vdots \arrow[rr,crossing over]\arrow[from=uu,crossing over]\arrow[dr,shift left=.5ex,dashed,gray] & & \vdots \arrow[from=uu,crossing over]\arrow[dr,shift left=.5ex,dashed,gray] & \\
 & \textcolor{gray}{\{0\}}\arrow[ul,shift left=.5ex,dashed,gray]\arrow[rr,dashed,gray]\arrow[dd,dashed] & & \textcolor{gray}{W}\arrow[dd,dashed]\arrow[ul,shift left=.5ex,dashed,gray]\arrow[rr,dashed,gray] & & \textcolor{gray}{W}\arrow[dd,dashed]\arrow[ul,shift left=.5ex,dashed,gray] \\
V_N \arrow[rr,crossing over]\arrow[dr,shift left=.5ex,dashed]\arrow[uu] & & V_0 \arrow[rr,crossing over,red,"\Psi_{0,1}" pos=.2]\arrow[from=uu,crossing over]\arrow[dr,shift left=.5ex,dashed] & & \widetilde V_N \arrow[from=uu,crossing over]\arrow[dr,shift left=.5ex,dashed] & \\
 & \{0\}\arrow[ul,shift left=.5ex,dashed]\arrow[rr,dashed] & & W\arrow[ul,shift left=.5ex,dashed]\arrow[rr,dashed,blue,"\Psi_{0,2}" pos=.2] & & W\arrow[ul,shift left=.5ex,dashed] \\
\end{tikzcd}
\end{equation}
\end{proof}

\section{Virtual invariants}\label{sec3}
In this section we study fixed points under the action of a torus on the moduli space of framed stable representations of fixed numerical type of the nested instantons quiver. By doing this we are then able to apply virtual equivariant localization and compute certain relevant virtual topological invariants. On the physics side this is equivalent to the computation of certain partition functions of some suitable quiver GLSM theory by means of the SUSY localization technique.

\subsection{Equivariant torus action and localization}\label{sec:fixedpts}
We begin the analysis of the fixed locus under a certain toric action on the moduli space of nested instantons with a brief recall of the results obtained in \cite{flach_jardim} and show how they enable us to fully characterize the $T-$fixed locus of the two-step nested instantons quiver. The main result we want to recall is the following:
\begin{theorem}[von Flach-Jardim, \cite{flach_jardim}]
The moduli space $\mathcal N(r,n_0,n_1)\simeq\mathcal F(r,n_0-n_1,n_1)$ of stable representations of the nested ADHM quiver is a quasi-projective variety equipped with a perfect obstruction theory. Its $T-$equivariant deformation complex is the following
\begin{equation}
\begin{tikzcd}[row sep=-1mm]
& & Q\otimes\End(V_0)\\
& &\oplus & \Lambda^2Q\otimes\End(V_0)\\
& &\Hom(W,V_0) & \oplus\\
&\End(V_0) &\oplus & Q\otimes\Hom(V_1,V_0)\\
& \quad\oplus\quad \arrow[r,"d_0"] &\Lambda^2Q\otimes\Hom(V_0,W) \arrow[r,"d_1"] & \quad\qquad\oplus\qquad\quad \arrow[r,"d_2"] & \Lambda^2Q\otimes\Hom(V_1,V_0)\\
&\End(V_1) &\oplus & \Lambda^2Q\otimes\Hom(V_1,W)\\
& & Q\otimes\End(V_1) & \oplus\\
& &\oplus & \Lambda^2Q\otimes\End(V_1)\\
& &\Hom(V_1,V_0)
\end{tikzcd}
\end{equation}
with
\begin{displaymath}
\left\{
\begin{aligned}
&d_0(h_0,h_1)=\left([h_0,B_1^0],[h_0,B_2^0],h_0I,-Jh_0,[h_1,B_1^1],[h_1,B_2^1],h_0F-Fh_1\right) \\
&d_1(b_1^0,b_2^0,i,j,b_1^1,b_2^1,f)=\left([b_1^0,B_2^1]+[B_1^0,b_2^0]+iJ+Ij,B_1^0f+b_1^0F-Fb_1^1-fB_1^1,\right. \\
&\qquad\qquad\qquad\qquad\qquad\quad \left.B_2^0f+b_2^0F-Fb_2^1-fB_2^1,jF+Jf,[b_1^1,B_2^1]+[B_1^1,b_2^1]\right)\\
&d_2(c_1,c_2,c_3,c_4,c_5)=c_1F+B_2^0c_2-c_2B_2^1+c_3B_1^0-B_1^1c_3-Ic_4-Fc_5
\end{aligned}\right.
\end{displaymath}
Thus the infinitesimal deformation space and the obstruction space at any $X$ will be isomorphic to $H^1[\mathcal C(X)]$ and $H^2[\mathcal C(X)]$, respectively. $\mathcal N(r,n_1,n_2)$ is smooth iff $n_1=1$ (\cite{cheah}).
\end{theorem}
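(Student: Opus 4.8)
The plan is to deduce the statement as the $N=1$ specialisation of the theorem already established in Section \ref{sec:moduli_space}, refined by the torus action. The isomorphism $\mathcal N(r,n_0,n_1)\simeq\mathcal F(r,n_0-n_1,n_1)$ and the fact that the space is quasi-projective with a perfect obstruction theory are exactly Theorem \ref{thm:iso_flags} and the general theorem of Section \ref{sec:hyperkahler} read off at $N=1$; likewise Proposition \ref{theta_stability} at $N=1$ reduces stability to $F^1$ injective together with stability of the ADHM datum $\mathcal A$, which is von Flach--Jardim's stability condition. So the only genuinely new content is the explicit $T$-equivariant form of the complex and the smoothness criterion. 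First I would set $N=1$ in the four-term complex $\mathcal C(X)$ of \eqref{tang_obs_N}: the terms collapse to $\mathcal C^0=\End(V_0)\oplus\End(V_1)$, $\mathcal C^1$ carrying the linearised $\mathcal G$-action, $\mathcal C^2$ carrying the linearised nested ADHM relations, and $\mathcal C^3=\Hom(V_1,V_0)$ recording the single syzygy among those relations. Matching the components of the $d_i$ in \eqref{tang_obs_N} with the displayed $d_0,d_1,d_2$ is then a direct comparison of terms.

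Second, I would promote this to a $T$-equivariant complex. The torus $T=(\mathbb C^*)^2$ acts on $\mathbb C^2$ and hence on a representation $X$ by rescaling $(B_1^i,B_2^i)\mapsto(t_1B_1^i,t_2B_2^i)$ and $J\mapsto t_1t_2\,J$, while $I$ and $F^i$ have weight zero; this is precisely the action descending to $\mathcal N(r,n_0,n_1)$. Tracking these weights through $d_0,d_1,d_2$ attaches to each summand the advertised factor of $Q$ (the standard two-dimensional $T$-module) or $\Lambda^2Q=\det Q$: the commutator pairs $[\,\cdot\,,B_1^i],[\,\cdot\,,B_2^i]$ assemble into $Q\otimes\End(V_i)$, the term $Jh_0$ picks up $\Lambda^2Q$ so that $\Hom(V_0,W)$ appears twisted by $\Lambda^2Q$, each output of the ADHM-type relation lands in $\Lambda^2Q\otimes(-)$, and the pair of $F$-compatibility equations $B_{1,2}^0f+b_{1,2}^0F-Fb_{1,2}^1-fB_{1,2}^1$ produces $Q\otimes\Hom(V_1,V_0)$. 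One only has to check that $d_0,d_1,d_2$ intertwine these weights, which is immediate from homogeneity of the equations.

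The identification of the cohomology is then inherited verbatim from the general proof: that argument already shows $H^0(\mathcal C(X))=H^3(\mathcal C(X))=0$ — from injectivity of $F^1$ and stability of $\mathcal A$ via the distinguished triangle \eqref{dist_triangle} — and that $\mathcal C(X)$ represents the perfect obstruction theory, whence $H^1[\mathcal C(X)]$ is the Zariski tangent space and $H^2[\mathcal C(X)]$ the obstruction space. No new computation is needed at $N=1$.

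Finally, for smoothness I would isolate $H^2$ and test its vanishing. For $r=1$ the cleanest route is the isomorphism $\mathcal N(1,n_0,n_1)\simeq\Hilb^{\hat{\bold n}}(\mathbb C^2)$ of Theorem \ref{thm:iso_nhs}, where $\hat{\bold n}=(n_0,n_0-n_1)$ has its two colengths differing exactly by $n_1=\dim V_1$; Cheah's classification of smooth two-step nested Hilbert schemes then gives smoothness precisely when the colengths differ by at most one, i.e. $n_1=1$. Equivalently one computes $H^2(\mathcal C(X))$ directly and shows it vanishes identically iff $\dim V_1=1$. I expect this last point to be the main obstacle: one must show that for $n_1\ge 2$ there genuinely exist stable $X$ with $H^2(\mathcal C(X))\neq 0$, rather than merely a nonzero formal obstruction space, and conversely that $n_1=1$ forces $H^2=0$ at every stable $X$. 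This is exactly where the Euler-characteristic bookkeeping along the triangle \eqref{dist_triangle} must be combined with the injectivity of $F^1$, mirroring Cheah's singularity analysis.
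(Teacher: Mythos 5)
Your proposal is correct and follows essentially the same route the paper takes: the paper does not reprove this statement but quotes it from von Flach--Jardim, and its own general theorem of Section \ref{sec1} together with the equivariant lift \eqref{tang_obs_equiv_N} specialises at $N=1$ to exactly the displayed complex, with the weight assignments ($B_{1,2}^i$ of weight $Q$, $J$ of weight $\Lambda^2Q$, $I$ and $F$ invariant) matching yours. Your honest flagging of the smoothness criterion as the one genuinely nontrivial remaining point is apt, but both you and the paper legitimately outsource it to Cheah via the identification with the two-step nested Hilbert scheme, so no gap remains.
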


Moreover, it turns out, \cite{flach_jardim}, that there exists a surjective morphism $\mathfrak q:(B_1^0,B_2^0,I,J,B_1^1,B_2^1,F)\mapsto (B_1',B_2',I',J')$ mapping the nested ADHM data of type $(r,n_0,n_1)$ to the ADHM data of numerical type $(r,n_0-n_1)$. Thus we have two different maps sending the moduli space of stable representations of the nested ADHM quiver to the moduli space of stable representations of ADHM data. The situation is depicted by the following commutative diagram
\begin{figure}[H]
\centering
\begin{tikzcd}
\mathcal N(r,n_0,n_1)\arrow[r,"\eta"]\arrow[dr,"\mathfrak q"] & \mathcal M(r,n_0)\\
 & \mathcal M(r,n_0-n_1)\arrow[u,"\tilde f"]
\end{tikzcd}
\end{figure}
by means of which one can characterize $T-$fixed points of $\mathcal N(r,n_0,n_1)$ by means of fixed points of $\mathcal M(r,n_0)$ and $\mathcal M(r,n_0-n_1)$. In particular we can first take the decomposition $V_0=V\oplus V_1$, then decompose the vector spaces $V_0$, $V$ with respect to the action of $\bold T$: if $\lambda_0:T\to U(V_0)$ and $\lambda:T\to U(V)$ are morphisms for the toric action on $V_0$, $V$, we have
\begin{equation}
\left\{
\begin{aligned}
& V & =\quad & \bigoplus_{k,l} V(k,l) &=\quad & \bigoplus_{k,l}\left\{v\in V|\lambda(t)v=t_1^kt_2^lv\right\}\\
& V_0 & =\quad & \bigoplus_{k,l} V_0(k,l) &=\quad & \bigoplus_{k,l}\left\{v_0\in V_0|\lambda_0(t)v_0=t_1^kt_2^lv_0\right\}
\end{aligned}
\right.
\end{equation}
Thus if $X=(W,V,B_1',B_2',I',J')$, $X_0=(W,V_0,B_1^0,B_2^1,I,J)$ are fixed points for this torus action, the very well known results about the classification of fixed points for ADHM data leads us to the following commutative diagram.
\begin{equation}\label{comm_diag_1}
\begin{tikzcd}
& V(k-1,l) \arrow[from = dl] \arrow[rr,"B_2'"] \arrow[from=dd] & & V(k-1,l-1) \arrow[from = dl] \arrow[from=dd,"B_1'"]\\
V_0(k-1,l) \arrow[rr, crossing over] \arrow[from=dd,"B_1^0"] & & V_0(k-1,l-1) \\
& V(k,l) \arrow[from = dl] \arrow[rr] &  & V(k,l-1) \arrow[from = dl,"\tilde f"] \\
V_0(k,l) \arrow[rr,"B_2^0"] & & V_0(k,l-1) \arrow[uu,crossing over]\\
\end{tikzcd}
\end{equation}
\begin{proposition}
Let $X\in\mathbb X_0$ be a fixed point of the toric action. The following statements hold:
\begin{enumerate}
\item If $k>0$ or $l>0$, then $V_{0}(k,l)=0$, $V(k,l)=0$;
\item $\dim V_{0}(k,l)\le 1$, $\forall k,l$ and $\dim V(k,l)\le 1$, $\forall k,l$;
\item If $k,l\le 0$, then $\dim V_{0}(k,l)\ge\dim V_{0}(k-1,l)$, $\dim V_{0}(k,l)\ge\dim V_{0}(k,l-1)$, $\dim V(k,l)\ge\dim V(k-1,l$, $\dim V(k,l)\ge\dim V(k,l-1)$ and $\dim V_0(k,l)\ge\dim V(k,l)$.
\end{enumerate}
\end{proposition}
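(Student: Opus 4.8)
The strategy is to reduce all three statements to the classical classification of $T$-fixed ADHM data, applied simultaneously to the two stable ADHM data built into $X$: the datum $(W,V_0,B_1^0,B_2^0,I,J)$ produced by $\eta$ and the datum $(W,V,B_1',B_2',I',J')$ produced by $\mathfrak q$, both stable by Proposition \ref{theta_stability} and the constructions of Section \ref{sec:moduli_space}. Since $X$ is $T$-fixed, a choice of lift gives a cocharacter $\lambda\colon T\to\mathcal G$ under which every structure map is equivariant: $B_1^{\bullet}$ and $B_2^{\bullet}$ lower the bidegree by $(1,0)$ and $(0,1)$, $I$ is homogeneous of weight $(0,0)$, and $J$ of weight $(-1,-1)$, as recorded in diagram \eqref{comm_diag_1}.

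For statement (1) I would combine equivariance with stability. Condition \textbf{S2} says $V_0$ is the smallest $B_1^0,B_2^0$-invariant subspace containing $\im(I)$, so it is spanned by the images of $\im(I)$ under arbitrary words in $B_1^0$ and $B_2^0$; since $\im(I)\subseteq V_0(0,0)$ and each letter lowers $k$ or $l$ by one, every nonzero weight space has $k\le0$ and $l\le0$. The same argument for the stable datum on $V$ yields $V(k,l)=0$ unless $k,l\le0$.

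The heart of the matter is statement (2), where I expect the real work to lie. First I would note that at a fixed point $J=0$ automatically: the relation $[B_1^0,B_2^0]+IJ=0$ is homogeneous and the commutator lowers bidegree by $(1,1)$, so $IJ$ must too; as $I$ has weight $(0,0)$ this forces $J$ to lower bidegree by $(1,1)$, and since $W$ sits in weight $(0,0)$ while $V_0(1,1)=0$ by (1), we conclude $J\equiv0$ (and similarly $J'=0$). Hence $[B_1^0,B_2^0]=0$, the operators $B_1^0,B_2^0$ commute, and stability makes $V_0$ a cyclic $\mathbb C[B_1^0,B_2^0]$-module generated by $\im(I)$. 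For $r=1$ this realizes $V_0\cong\mathbb C[z_1,z_2]/I_0$ with $I_0$ a monomial ideal, so that $V_0(-a,-b)$ is spanned by the class of $z_1^az_2^b$ and has dimension at most one; for $r>1$ one first splits $W$ into the weight lines of the framing torus $(\mathbb C^*)^r$ and repeats the cyclic argument in each sector, so each framing component of $V_0(k,l)$ is at most one-dimensional. The same discussion applies to $V$.

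Statement (3) should then be read off the monomial (Young-diagram) picture: if $z_1^az_2^b\notin I_0$ then $z_1^{a'}z_2^{b'}\notin I_0$ for all $a'\le a$ and $b'\le b$, which translates exactly into $\dim V_0(k,l)\ge\dim V_0(k-1,l)$ and $\dim V_0(k,l)\ge\dim V_0(k,l-1)$ for $k,l\le0$, and identically for $V$. The final inequality $\dim V_0(k,l)\ge\dim V(k,l)$ follows from the nested structure: injectivity of $F^1$ identifies $V$ with the $T$-invariant quotient $\widetilde V_1=V_0/F^1(V_1)$, whence $\dim V(k,l)=\dim V_0(k,l)-\dim V_1(k,l)\le\dim V_0(k,l)$. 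The main obstacle, as flagged, is the one-dimensionality in (2): everything rests on upgrading stability to cyclicity for the commuting pair $(B_1^0,B_2^0)$, after which the monomial-ideal description renders (2) and (3) immediate.
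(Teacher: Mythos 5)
Your proof is correct and follows essentially the route the paper intends: the paper states this proposition without proof, appealing to the standard classification of $T$-fixed ADHM data, and your argument is precisely that classification (equivariance plus stability giving cyclicity and a monomial ideal) applied to the two stable data $\eta(X)$ and $\mathfrak q(X)$, together with the identification $V\simeq V_0/F^1(V_1)$ for the final inequality. The only step I would phrase differently is the vanishing of $J$: its weight $t_1t_2$ is part of the definition of the torus action (it sits in $\Lambda^2Q\otimes\Hom(V_0,W)$ in the equivariant complex), whereas reading it off from $[B_1^0,B_2^0]+IJ=0$ only shows that $IJ$ is homogeneous, which gives $J=0$ only after the additional observation that $I$ is injective in each framing sector.
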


The previous propositions give us an easy way of visualizing fixed points of the $T-$action on the nested ADHM data. If we suitably normalize each non-zero map to $1$ by the action of $\prod_{k,l}GL(V_0(k,l))\times\prod_{k',l'}GL(V(k',l'))$ each critical point point can be put into one-to-one correspondence with nested Young diagrams $Y_\mu\subseteq Y_\nu$. Thus the fixed points of the original nested ADHM data are classified by couples $(\nu,\nu\setminus\mu)$, where $\mu\subset\nu$ and $\nu\setminus\mu$ is the skew Young diagram constructed by taking the complement of $\mu$ in $\nu$.

If we now take a fixed point $Z=(\nu,\mu)$ and define $\nu_i=\sum_k\dim V_0(k,1-i)$, $\nu'_j=\sum_l\dim V_0(1-j,l)$ and similarly $\mu_i=\sum_k\dim V(k,1-i)$, $\mu'_j=\sum_l\dim V(1-j,l)$, we can regard $V_0$ and $V$ as $T-$modules and write them as
\begin{equation}
\left\{
\begin{aligned}
&V_0=\bigoplus_{k,l}V_0(k,l)=\sum_{i=1}^{M_1}\sum_{j=1}^{\nu'_i}T_1^{-i+1}T_2^{-j+1}=\sum_{j=1}^{N_1}\sum_{j=1}^{\nu_j}T_1^{-i+1}T_2^{-j+1}\\
&V=\bigoplus_{k,l}V(k,l)=\sum_{i=1}^{M_2}\sum_{j=1}^{\mu'_i}T_1^{-i+1}T_2^{-j+1}=\sum_{j=1}^{N_2}\sum_{j=1}^{\mu_j}T_1^{-i+1}T_2^{-j+1}\\
\end{aligned}
\right.
\end{equation}
with $M_1=\nu_1$, $M_2=\mu_1$, $N_1=\nu'_1$, $N_2=\mu'_1$. If we now take $V_0=V\oplus V_1$, then
\begin{equation}
V_1=\sum_{(i,j)\in\nu\setminus\mu}T_1^{-i+1}T_2^{-j+1}=\sum_{i=1}^{M_1}\sum_{j=1}^{\nu'_i-\mu'_i}T_1^{-i+1}T_2^{-\mu_i'-j+1}
\end{equation}

The virtual tangent space $T_Z^{\textrm{vir}}\mathcal N(1,n_0,n_1)$ to $\mathcal N(1,n_0,n_1)$ at $Z$ can be regarded as a $\bold T^2-$module, so that
\begin{equation}
\begin{split}
T_Z^{\textrm{vir}}\mathcal N(1,n_0,n_1)&=\End(V_0)\otimes(Q-1-\Lambda^2Q)+\End(V_1)\otimes(Q-1-\Lambda^2Q)+\Hom(W,V_0)+\\
&\quad+\Hom(V_0,W)\otimes\Lambda^2Q-\Hom(V_1,W)\otimes\Lambda^2Q+\Hom(V_1,V_0)(1+\Lambda^2Q-Q)\\
&=(V_1\otimes V_0^*+V_1\otimes V_1^*-V_1^*\otimes V_0)\otimes(Q-1-\Lambda^2Q)+V_0+V_0^*\otimes\Lambda^2Q+\\
&\quad-V_2^*\otimes\Lambda^2Q.
\end{split}
\end{equation}
In the first place we might recognize the term $V_0^*\otimes V_0\otimes(Q-\Lambda^2Q-1)+V_0+V_0^*\otimes\Lambda^2Q$ in the sum as being the tangent space at the moduli space of stable representation of the ADHM quiver $T_{\tilde Z}\mathcal M(1,n_0)$, with $\tilde Z=(\nu)$. Thus we have
\begin{equation}\label{tan_rep_char}
\begin{split}
T_Z^{\textrm{vir}}\mathcal N(1,n_0,n_1)&=T_{\tilde Z}\mathcal M(1,n_0)+(V_1\otimes V_1^*-V_1^*\otimes V_0)\otimes(Q-1-\Lambda^2Q)-V_1^*\otimes\Lambda^2Q.
\end{split}
\end{equation}
We have
\begin{equation}
\begin{split}
V_1^*\otimes(Q-1-\Lambda^2Q)&=(T_1-1)(1-T_2)\sum_{i=1}^{M_1}\sum_{j=1}^{\nu_i'-\mu_i'}T_1^{i-1}T_2^{\mu_i'+j-1}\\
&=(T_1-1)\sum_{i=1}^{M_1}T_1^{i-1}T_2^{\mu_i'-1}(1-T_2)\sum_{j=1}^{\nu'_i-\mu'_i}T_2^{j}\\
&=(T_1-1)\sum_{i=1}^{M_1}T_1^{i-1}T_2^{\mu_i'-1}(1-T_2)\left(\frac{1-T_2^{\nu_i'-\mu'_i+1}}{1-T_2}-1\right)\\
&=(T_1-1)\sum_{i=1}^{M_1}T_1^{i-1}T_2^{\mu_i'-1}(1-T_2)\left(\frac{T_2-T_2^{\nu_i'-\mu'_i+1}}{1-T_2}\right)\\
&=(T_1-1)\sum_{i=1}^{M_1}T_1^{i-1}(T_2^{\mu_i'}-T_2^{\nu_i'}),
\end{split}
\end{equation}
so that
\begin{equation}
\begin{split}
V_1^*\otimes V_1\otimes(Q-1-\Lambda^2Q)&=(T_1-1)\sum_{j=1}^{N_1}\sum_{j'=1}^{\nu_{j}-\mu_{j}}T_1^{-\mu_j-j'+1}T_2^{-j+1}\sum_{i=1}^{M_1}T_1^{i-1}(T_2^{\mu_i'}-T_2^{\nu_i'})\\
&=\sum_{i=1}^{M_1}\sum_{j=1}^{N_1}T_1^{i-\mu_j}(T_2^{-j+\mu'_i+1}-T_2^{-j+\nu'_i+1})(T_1-1)\sum_{j'=1}^{\nu_j-\mu_j}T_1^{-j'}\\
&=\sum_{i=1}^{M_1}\sum_{j=1}^{N_1}T_1^{i-\mu_j}(T_2^{-j+\mu'_i+1}-T_2^{-j+\nu'_i+1})(T_1-1)\left(\frac{1-T_1^{-\nu_j+\mu_j-1}}{1-T_1^{-1}}-1\right)\\
&=\sum_{i=1}^{M_1}\sum_{j=1}^{N_1}(T_1^{i-\mu_j}-T_1^{i-\nu_j})(T_2^{-j+\mu'_i+1}-T_2^{-j+\nu'_i+1}),
\end{split}
\end{equation}
while we have
\begin{equation}
\begin{split}
V_1^*\otimes V_0\otimes(Q-1-\Lambda^2Q)&=(T_1-1)\sum_{j=1}^{N_1}\sum_{j'=1}^{\nu_{j}}T_1^{-j'+1}T_2^{-j+1}\sum_{i=1}^{M_1}T_1^{i-1}(T_2^{\mu_i'}-T_2^{\nu_i'})\\
&=\sum_{i=1}^{M_1}\sum_{j=1}^{N_1}T_1^i(T_2^{-j+\mu'_i+1}-T_2^{-j+\nu'_i+1})(T_1-1)\sum_{j'=1}^{\nu_j}T_1^{-j'}\\
&=\sum_{i=1}^{M_1}\sum_{j=1}^{N_1}T_1^i(T_2^{-j+\mu'_i+1}-T_2^{-j+\nu'_i+1})(T_1-1)\left(\frac{1-T_1^{-\nu_j-1}}{1-T_1^{-1}}-1\right)\\
&=\sum_{i=1}^{M_1}\sum_{j=1}^{N_1}(T_1^i-T_1^{i-\nu_j})(T_2^{-j+\mu'_i+1}-T_2^{-j+\nu'_i+1}),
\end{split}
\end{equation}
and
\begin{equation}
\begin{split}
V_1^*\otimes\Lambda^2Q&=T_1T_2\sum_{i=1}^{M_1}\sum_{j=1}^{\nu_i'-\mu_i'}T_1^{i-1}T_2^{\mu_i'+j-1}\\
&=\sum_{i=1}^{M_1}\sum_{j=1}^{\nu_i'-\mu_i'}T_1^iT_2^{\mu_i'+j}.
\end{split}
\end{equation}
Putting everything together we finally get that
\begin{equation}\label{tan_rep}
\begin{split}
T_Z^{\textrm{vir}}\mathcal N(1,n_0,n_1)=&T_{\tilde Z}\mathcal M(1,n_0)+\sum_{i=1}^{M_1}\sum_{j=1}^{N_1}(T_1^{i-\mu_j}-T_1^{i})(T_2^{-j+\mu_i'+1}-T_2^{-j+\nu_i'+1})\\
&-\sum_{i=1}^{M_1}\sum_{j=1}^{\nu_i'-\mu_i'}T_1^iT_2^{j+\mu_i'}.
\end{split}
\end{equation}
As an immediate generalization of \eqref{tan_rep} we can easily see that
\begin{equation}
\begin{split}
T_Z^{\textrm{vir}}\mathcal N(r,n_0,n_1)=&T_{\tilde Z}\mathcal M(r,n_0)+\sum_{a,b=1}^r\sum_{i=1}^{M_1^{(a)}}\sum_{j=1}^{N_1^{(b)}}R_bR_a^{-1}\left(T_1^{i-\mu_j^{(b)}}-T_1^{i}\right)\left(T_2^{-j+\mu_i^{(a)'}+1}+\right.\\ & \left.-T_2^{-j+\nu_i^{(a)'}+1}\right)-\sum_{i=1}^{M_1^{(a)}}\sum_{j=1}^{\nu_i^{(a)'}-\mu_i^{(a)'}}T_1^iT_2^{j+\mu_i^{(a)'}},
\end{split}
\end{equation}
where $(T_1,T_2,R_a),a=1,\dots,r$ are the canonical generators of the representation ring of $T\curvearrowright \mathcal N(r,n_0,n_1)$.


\begin{remark}
It turns out that the character representation for the virtual tangent $T_Z^{\emph{vir}}\mathcal N$ can be computed by exploiting deformation theory techniques. These techniques may also be employed to compute the virtual fundamental class and ($\bold T-$character of) the virtual tangent bundle at fixed points of nested Hilbert schemes on surfaces, as it's done in \cite{2017arXiv170108899G}.


If in particular one takes $\left(\mathbb C^2\right)^{[N_0\ge N_1]}$ to be the nested Hilbert scheme of points on $\mathbb C^2=\spec(R)$, with $\mathbb C[x_0,x_1]$, by lifting the natural torus action on $\mathbb C^2$ to $\left(\mathbb C^2\right)^{[N_0\ge N_1]}$, it is proved in \cite{2017arXiv170108899G} that the $T-$fixed locus is isolated and given by the inclusion of monomial ideals $I_0\subseteq I_1$, which is equivalent to the assignment of couples of nested partitions $\mu\subseteq\nu$. Then the virtual tangent space at a fixed point is given by
\begin{displaymath}
T_{I_0\subseteq I_1}^{\emph{vir}}=-\chi(I_0,I_0)-\chi(I_1,I_1)+\chi(I_0,I_1)+\chi(R,R),
\end{displaymath}
with $\chi(-,-)=\sum_{i=0}^2(-1)^i\ext_R^i(-,-)$. Then the $T-$representation of $T^{\emph{vir}}_{I_0\subseteq I_1}$ can be explicitly written in terms of Laurent polynomials in the torus characters $t_1,t_2$ of $T$. Then in terms of the characters $\mathsf Z_0,\mathsf Z_1$ of the $T-$fixed $0-$dimensional subschemes $Z_1\subseteq Z_0\subset\mathbb C^2$ corresponding to $I_0\subseteq I_1$ one has (see eq. (29) in \cite{2017arXiv170108899G})
\begin{displaymath}
\Tr T^{\emph{vir}}_{I_0\subseteq I_1}=\mathsf Z_0+\frac{\overline{\mathsf Z}_1}{t_1t_2}+\left(\overline{\mathsf Z}_0\mathsf Z_1-\overline{\mathsf Z}_0\mathsf Z_0-\overline{\mathsf Z}_1\mathsf Z_1\right)\frac{(1-t_1)(1-t_2)}{t_1t_2}.
\end{displaymath}
If we now make the necessary identifications $t_i=T_i^{-1}$, $\mathsf Z_0=V_0$ and $\mathsf Z_1=V$ we can see that equation (29) of \cite{2017arXiv170108899G} exactly agrees with our prescription for the character representation \eqref{tan_rep_char} of the virtual tangent space $T^{\emph{vir}}_Z\mathcal N(1,n_0,n_1)$, with $n_0=N_0$ and $n_1=N_0-N_1$.
\end{remark}

We now move on studying the fixed locus of the more general nested instantons moduli space $\mathcal N(r,n_0,\dots,n_N)$. However, similarly to the previous case we first want to show that the moduli space of stable representations of the nested ADHM quiver is equivalently described by the datum of $(N+1)$ moduli spaces of framed torsion-free sheaves on $\mathbb P^2$, namely $\mathcal M(r,n_0),\mathcal M(r,n_0-n_1),\dots,\mathcal M(r,n_0-n_{s-1})$. In order to do this we want to know if it is possible to recover the structure of the nested ADHM quiver given a set of stable ADHM data. First of all we can notice that, as $F^i$ is injective $\forall i$, we have the sum decomposition $V_0=V_i\oplus\tilde V_i$, but also $V_i=V_{i+1}\oplus\hat V_{i+1}$, with $\hat{V}_{i+1}=V_i/\im F_i$, so that $V_0=V_i\oplus\hat V_i\oplus\tilde V_{i-1}$, thus $\tilde V_i=\hat V_i\oplus\tilde V_{i-1}$.

Let us first focus on the vector spaces $V_0$ and $V_1$. It can be shown as in \cite{bruzzo2011,flach_jardim} that once we fix a stable ADHM datum $(W,\tilde V_1,\tilde B_1^1,\tilde B_2^1,\tilde I^1,\tilde J^1)$ and the endomorphisms $B_1^1,B_2^1\in\End V_1$ it is always possible to reconstruct the stable ADHM datum $(W,V_0,B_1^0,B_2^0,I,J)$ as
\begin{equation}
B_1^0=\begin{pmatrix}
B_1^1 & B_1^{'1}\\
0 & \tilde B_1^1
\end{pmatrix},\qquad
B_2^0=\begin{pmatrix}
B_2^1 & B_2^{'1}\\
0 & \tilde B_2^1
\end{pmatrix},\qquad
I=\begin{pmatrix}
I^{'1}\\ \tilde I^1
\end{pmatrix},\qquad
J=\begin{pmatrix}
0 & \tilde J^1
\end{pmatrix}
\end{equation}
together with the morphism $F^1=\mathbbm 1_{V_1}$ such that $[B_1^1,B_2^1]=0$, $B_1^0F^1-F^1B_1^1=B_2^0F^1-F^1B_2^1=0$ and $JF^1=0$. The same can obviously be done for any of the stable ADHM data $(W,\tilde V_i,\tilde B_1^i,\tilde B_2^i,\tilde I^i,\tilde J^i)$ we constructed previously, and we would have
\begin{equation}
B_1^0=\begin{pmatrix}
B_1^i & B_1^{'i}\\
0 & \tilde B_1^i
\end{pmatrix},\qquad
B_2^0=\begin{pmatrix}
B_2^i & B_2^{'i}\\
0 & \tilde B_2^i
\end{pmatrix},\qquad
I=\begin{pmatrix}
I^{'i}\\ \tilde I^i
\end{pmatrix},\qquad
J=\begin{pmatrix}
0 & \tilde J^i
\end{pmatrix}
\end{equation}
together with the morphism $f^i=\mathbbm 1_{V_i}$ such that $[B_1^i,B_2^i]=0$, $B_1^0f^i-f^iA_i=B_2^0f^i-f^iB_2^i=0$ and $Jf^i=0$. If we now fix
\begin{equation}
F^i=\begin{pmatrix}
\mathbbm 1_{V_i}\\ 0
\end{pmatrix},\qquad F^i:V_{i}\to V_{i-1},
\end{equation}
which is clearly injective, then obviously $f^i=F^1F^2\cdots F^i$, where $F^j$ now stands for the linear extension to $V_0$, and $B_1^0f^i-f^iB_1^i=0$ (resp. $B_2^0f^i-f^iB_2^i=0$) is equivalent to $B_1^0F^1F^2\cdots F^{i-1}F^i-F^1F^2\cdots F^iB_1^i=B_1^{i-1}F^i-F^iB_1^i=0$ (resp. $B_2^{i-1}F^i-F^iB_2^i=0$), and $Jf^i=JF^1F^2\cdots F^i=0$. This construction makes it possible to us to classify the $T-$fixed locus of $\mathcal N(r,n_0,\dots,n_{s-1})$ in terms of the $T-$fixed loci of $\mathcal M(r,n_0)$ and $\{\mathcal M(r,n_0-n_i)\}_{i>0}$. In particular the $T-$fixed locus of $\mathcal M(r,k)$ is into $1-1$ correspondence with coloured partitions $\boldsymbol\mu=(\mu^1,\dots,\mu^r)\in\mathcal P^r$ such that $|\boldsymbol\mu|=|\mu^1|+\cdots+|\mu^r|=k$. This fact and the inclusion relations between the vector spaces $V_i$ prove the following
\begin{proposition}\label{fixed_pts_longtail}
The $T-$fixed locus of $\mathcal N(r,n_0,\dots,n_{s-1})$ can be described by $s-$tuples of nested coloured partitions $\boldsymbol\mu_1\subseteq\cdots\subseteq\boldsymbol\mu_{s-1}\subseteq\boldsymbol\mu_0$, with $|\boldsymbol\mu_0|=n_0$ and $|\boldsymbol\mu_{i>0}|=n_0-n_i$.
\end{proposition}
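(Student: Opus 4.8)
The strategy is to transport the problem to the already-understood fixed-point theory on the ordinary instanton moduli spaces $\mathcal M(r,k)$ through the projection morphisms, and then to extract the nesting of partitions from the flag of subspaces. First I would observe that the maps $\eta:\mathcal N(r,\bold n)\to\mathcal M(r,n_0)$ and $\mathfrak p^{(N)}_i:\mathcal N(r,\bold n)\to\mathcal M(r,n_0-n_i)$ built in Section~\ref{sec:moduli_space} are $T$-equivariant, being assembled from the ADHM data $B^0_{1,2},I,J$ and the canonical splittings $V_0=V_i\oplus\tilde V_i$, all of which are compatible with the torus action on a fixed representation. Consequently a $T$-fixed point $[X]\in\mathcal N(r,\bold n)$ is carried to a $T$-fixed point in each target, and I may invoke the recalled bijection between $T$-fixed points of $\mathcal M(r,k)$ and coloured partitions $\boldsymbol\mu\in\mathcal P^r$ with $|\boldsymbol\mu|=k$. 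This attaches to $[X]$ one partition $\boldsymbol\mu_0$ with $|\boldsymbol\mu_0|=n_0$, coming from the full datum $(W,V_0,B^0_{1,2},I,J)$, and for every $i>0$ a partition $\boldsymbol\mu_i$ with $|\boldsymbol\mu_i|=n_0-n_i$, coming from the projected datum $(W,\tilde V_i,\tilde B^i_{1,2},\tilde I^i,\tilde J^i)$.

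It then remains to pin down the inclusions $\boldsymbol\mu_1\subseteq\cdots\subseteq\boldsymbol\mu_N\subseteq\boldsymbol\mu_0$. At a fixed point every structure map is $T$-equivariant, so each image $\im(F^1\cdots F^i)\subseteq V_0$ is a $T$-submodule, and by {\bf S1} these form a strictly decreasing chain $V_0\supseteq\im F^1\supseteq\im(F^1F^2)\supseteq\cdots$. Passing to quotients gives $T$-equivariant, ADHM-compatible surjections $\tilde V_{i+1}\twoheadrightarrow\tilde V_i$ and $V_0\twoheadrightarrow\tilde V_i$. Under the dictionary identifying a $T$-fixed ADHM module with the monomial ideal whose staircase of boxes records its occupied weight spaces, such a surjection of cyclic modules is exactly an inclusion of the corresponding Young diagrams; this yields $\boldsymbol\mu_i\subseteq\boldsymbol\mu_{i+1}$ and $\boldsymbol\mu_i\subseteq\boldsymbol\mu_0$, i.e. the asserted chain.

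For the converse I would reverse the reconstruction spelled out just before the statement. Given nested partitions $\boldsymbol\mu_1\subseteq\cdots\subseteq\boldsymbol\mu_N\subseteq\boldsymbol\mu_0$, start from the monomial datum $(W,V_0,B^0_{1,2},I,J)$ associated to $\boldsymbol\mu_0$, and for each $i$ let $V_i\subseteq V_0$ be the span of the weight spaces sitting in the skew region $\boldsymbol\mu_0\setminus\boldsymbol\mu_i$; the containment of Young diagrams guarantees that this $V_i$ is $B^0_{1,2}$-invariant, so that setting $B^i_{1,2}=B^0_{1,2}|_{V_i}$, taking $F^i$ to be the standard inclusion $V_i\hookrightarrow V_{i-1}$ in the block form displayed above, and $G^i=0$ produces a representation satisfying the nested ADHM equations together with {\bf S1} and {\bf S2}. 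This representation is manifestly $T$-fixed and maps back to the prescribed tuple, and the two assignments are mutually inverse, establishing the bijection of the proposition.

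The step I expect to require the most care is the equivalence used in the nesting argument: one must check that the combinatorial containment $\boldsymbol\mu_i\subseteq\boldsymbol\mu_0$ of Young diagrams is genuinely the same datum as an ADHM-compatible $T$-equivariant surjection $V_0\twoheadrightarrow\tilde V_i$, and not merely the dimension bound $|\boldsymbol\mu_i|\le|\boldsymbol\mu_0|$. This forces one to match the weight decompositions $V_0=\bigoplus_{k,l}V_0(k,l)$ and $\tilde V_i=\bigoplus_{k,l}\tilde V_i(k,l)$ box by box and to use the cyclicity imposed by {\bf S2}, so that the complementary region $\boldsymbol\mu_0\setminus\boldsymbol\mu_i$ spans a subspace automatically invariant under the shift operators $B^0_{1,2}$ and hence defines an honest sub-diagram in the nested sense. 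Once this is in place, the remaining verifications, namely the $T$-equivariance of $\eta$ and $\mathfrak p^{(N)}_i$ and the closure of the reconstruction under {\bf S1} and {\bf S2}, are routine given Sections~\ref{sec1} and~\ref{sec2}.
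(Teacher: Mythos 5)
Your proposal is correct and follows essentially the same route as the paper: both reduce the problem to the known classification of $T$-fixed points of the ordinary instanton moduli spaces $\mathcal M(r,n_0)$ and $\mathcal M(r,n_0-n_i)$ via the projection maps, and both recover the nesting of coloured partitions from the chain of inclusions $V_0\supseteq\im F^1\supseteq\cdots$ together with the block-matrix reconstruction of the nested datum from the tuple of stable ADHM data. Your write-up merely makes explicit some steps the paper leaves implicit (the $T$-equivariance of the projections and the box-by-box matching of weight spaces), so no further comparison is needed.
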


In the same way as we did in a previous section, we can read the virtual tangent space to $\mathcal N(r,n_0,\dots,n_{s-1})$ off the following equivariant lift of the complex \eqref{tang_obs_N}
\begin{equation}\label{tang_obs_equiv_N}
\begin{tikzcd}
\bigoplus_{i=0}^N\End(V_i)\arrow[d,"d_0"]\\
Q\otimes\End(V_0)\oplus\Hom(W,V_0)\oplus\Lambda^2Q\otimes\Hom(V_0,W)\oplus\left[\bigoplus_{i=1}^N\left(Q\otimes\End(V_i)\oplus\Hom(V_i,V_{i-1}\right)\right]\arrow[d,"d_1"]\\
\Lambda^2Q\otimes\left(\End(V_0)\oplus\Hom(V_1,W)\right)\oplus\left[\bigoplus_{i=1}^N\left(Q\otimes\Hom(V_i,V_{i-1})\oplus\Lambda^2Q\otimes\End(V_i)\right)\right]\arrow[d,"d_2"]\\
\bigoplus_{i=1}^N\Lambda^2Q\otimes\Hom(V_i,V_{i-1})
\end{tikzcd},
\end{equation}
which gives us \eqref{tan_rep_long}.
\begin{equation}\label{tan_rep_long}
\begin{split}
T_Z^{\textrm{vir}}\mathcal N(1,n_1,n_2)&=\End(V_0)\otimes(Q-1-\Lambda^2Q)+\Hom(W,V_0)+\Hom(V_0,W)\otimes\Lambda^2Q\\
&\quad+\End(V_1)\otimes(Q-1-\Lambda^2Q)-\Hom(V_1,W)\otimes\Lambda^2Q+\\
&\quad+\Hom(V_1,V_0)\otimes(1+\Lambda^2Q-Q)+\\
&\quad+\End(V_2)\otimes(Q-1-\Lambda^2Q)+\Hom(V_2,V_1)\otimes(1+\Lambda^2Q-1)+\\
&\quad\cdots\\
&\quad+\End(V_{s-1})\otimes(Q-1-\Lambda^2Q)+\Hom(V_{s-1},V_{s-2})\otimes(1+\Lambda^2Q-Q)
\end{split}
\end{equation}
By decomposing the vector spaces $V_i$ in terms of characters of the torus $T$ we can also rewrite the representation of \eqref{tan_rep_long} in $R(T)$ as \eqref{tan_rep_long_char}
\begin{equation}\label{tan_rep_long_char}
\begin{split}
T_Z^{\textrm{vir}}\mathcal N(r,\bold n)=&T_{\tilde Z}\mathcal M(r,n_0)+\sum_{a,b=1}^r\sum_{i=1}^{M_0^{(a)}}\sum_{j=1}^{N_0^{(b)}}R_bR_a^{-1}\left(T_1^{i-\mu_{1,j}^{(b)}}-T_1^{i}\right)\left(T_2^{-j+\mu_{1,i}^{(a)'}+1}+\right.\\ & \left.-T_2^{-j+\mu_{0,i}^{(a)'}+1}\right)-\sum_{i=1}^{M_0^{(a)}}\sum_{j=1}^{\mu_{0,i}^{(a)'}-\mu_{1,i}^{(a)'}}T_1^iT_2^{j+\mu_{1,i}^{(a)'}}+\\
&+\sum_{k=2}^{s-1}\left[\sum_{a,b=1}^r\sum_{i=1}^{M_0^{(a)}}\sum_{j=1}^{N_0^{(b)}}R_bR_a^{-1}\left(T_1^{i-\mu_{k,j}^{(b)}}-T_1^{i-\mu_{k-1,j}^{(b)}}\right)\left(T_2^{-j+\mu_{k,i}^{(a)'}+1}-T_2^{-j+\mu_{0,i}^{(a)'}+1}\right)\right]+\\
&+(s-1)(T_1T_2),
\end{split}
\end{equation}
where the fixed point $Z$ is to be identified with a choice of a sequence of coloured nested partitions $\boldsymbol\mu_1\subseteq\boldsymbol\mu_{N-1}\subseteq\cdots\subseteq\boldsymbol\mu_{s-1}\subseteq\boldsymbol\mu_0$ as in proposition \ref{fixed_pts_longtail}, $\tilde Z\leftrightarrow\boldsymbol\mu_0$ and the last term, namely $(s-1)(T_1T_2)$, has been added in order to take into account the over-counting in the relations $[B_1^{i},B_2^{i}]=0$ due to the commutator being automatically traceless.

\subsection{Virtual equivariant holomorphic Euler characteristic}\label{sec:euler}
The first virtual invariant we are going to study is the holomorphic virtual equivariant Euler characteristic of the moduli space of nested instantons. The fact that we can decompose the virtual tangent bundle as a direct sum of equivariant line bundles under the torus action we previously described greatly simplifies the computations.

In particular, given a scheme $X$ with a $1-$perfect obstruction theory $E^\bullet$, one can define a virtual structure sheaf $\mathcal O_X^{\rm vir}$. Moreover one can choose an explicit resolution of $E^\bullet$ as $[E^{-1}\to E^0]$ a complex of vector bundles. If $[E_0\to E_1]$ denotes the dual complex, then one can also define the virtual tangent bundle $T_X^{\rm vir}\in K^0(X)$ as the class $T_X^{\rm vir}=[E_0]-[E_1]$. With these definitions, the virtual Todd genus of $X$ is defined as $\td^{\rm vir}(X)=\td(T_X^{\rm vir})$, and if $X$ is proper, given any $V\in K^0(X)$ one defines the virtual holomorphic Euler characteristic as
\begin{equation}
\chi^{\rm vir}(X,V)=\chi(X,V\otimes O^{\rm vir}_X),
\end{equation}
and as a consequence of the virtual Riemann-Roch theorem \cite{fantechi2010} if $X$ is proper and $V\in K^0(X)$ the virtual holomorphic Euler characteristic admits an equivalent definition as
\begin{equation}\label{vir_hol_euler_V}
\chi^{\rm vir}(X,V)=\int_{[X]^{\rm vir}}\ch(V)\cdot\td(T_X^{\rm vir}),
\end{equation}
where $[X]^{\rm vir}$ is the virtual fundamental class of $X$, $[X]^{\rm vir}\in A_{\vd}(X)$ and $\vd$ denotes the virtual dimension of $X$, $\vd=\rk E_0-\rk E_1$. Clearly, if we are interested in $\chi^{\rm vir}(X)$ then the previous formula reduces to
\begin{equation}\label{vir_hol_euler}
\chi^{\rm vir}(X)=\int_{[X]^{\rm vir}}\td(T_X^{\rm vir}),
\end{equation}
whenever $X$ is proper.

Equations \eqref{vir_hol_euler_V} and \eqref{vir_hol_euler} can be made even more explicit. In fact if we take $n=\rk E_0$, $m=\rk E_1$, so that $\vd=n-m$, and define $x_1,\dots,x_n$ and $u_1,\dots,u_m$ to be respectively the Chern roots of $E_0$ and $E_1$, then \eqref{vir_hol_euler} becomes
\begin{equation}
\chi^{\rm vir}(X)=\int_{[X]^{\rm vir}}\prod_{i=1}^n\frac{x_i}{1-\eu^{-x_1}}\prod_{j=1}^m\frac{1-\eu^{-u_j}}{u_j},
\end{equation}
while for \eqref{vir_hol_euler_V} we have
\begin{equation}
\chi^{\rm vir}(X,V)=\int_{[X]^{\rm vir}}\left(\sum_{k=1}^r\eu^{v_k}\right)\prod_{i=1}^n\frac{x_i}{1-\eu^{-x_1}}\prod_{j=1}^m\frac{1-\eu^{-u_j}}{u_j},
\end{equation}
since we can consider $V\in K^0(X)$ to be a vector bundle on $X$ with Chern roots $v_1,\dots,v_r$.

Now, if we have a proper scheme $X$ equipped with an action of a torus $(\mathbb C^*)^N$ and an equivariant $1-$perfect obstruction theory we can apply virtual equivariant localization in order to compute virtual invariants of $X$. We will now briefly recall how virtual localization works. First of all, for any equivariant vector bundle $B$ over a proper scheme $Z$ with a $1-$perfect obstruction theory, which is moreover equipped with a trivial action of $(\mathbb C^*)^N$, we have the decomposition
\begin{equation}
B=\bigoplus_{\bold k\in\mathbb Z^N}B^\bold k,
\end{equation}
where $B^\bold k$ denotes the $(\mathbb C^*)^N-$eigenbundles on which the torus acts by $t_1^{k_1}\cdots t_N^{k_N}$. If we now give a set of variables $\varepsilon_1,\dots,\varepsilon_N$, we identify $B$ with $B=\sum_{\bold k}B^\bold k\eu^{k_1\varepsilon_1}\cdots\eu^{k_N\varepsilon_N}\in K^0(Z)[\![\varepsilon_1,\dots,\varepsilon_N]\!]$. One then defines $B^{\rm fix}=B^{\bold 0}$ and $B^{\rm mov}=\oplus_{\bold k\neq\bold 0}B^\bold k$. Then the Chern character $\ch:K^0(Z)\to A^*(Z)$ can be extended by $\mathbb Q(\!(\varepsilon_1,\dots,\varepsilon_N)\!)-$linearity to
$$\ch:K^0(Z)(\!(\varepsilon_1,\dots,\varepsilon_N)\!)\to A^*(Z)(\!(\varepsilon_1,\dots,\varepsilon_N)\!).$$
Since the Grothendieck group of equivariant vector bundles $K^0_{(\mathbb C^*)^N}(Z)$ is a subring of $K^0(Z)[\![\varepsilon_1,\dots,\varepsilon_N]\!]$, the restriction of the extension of $\ch$ to $K^0_{(\mathbb C^*)^N}(Z)$ is naturally identified with the equivariant Chern character. Finally if one denotes by $p_*^{\rm vir}$ the $\mathbb Q(\!(\varepsilon_1,\dots,\varepsilon_N)\!)-$linear extension of $\chi^{\rm vir}(Z,-):K^0(Z)\to\mathbb Z$, and $p_*$ is the equivariant pushforward to a point, one can prove as in \cite{fantechi2010} that
\begin{equation}
p_*^{\rm vir}(V)=p_*\left(\ch(V)\td(T_Z^{\rm vir})\cap[Z]^{\rm vir}\right),\qquad V\in K^0(Z)(\!(\varepsilon_1,\dots,\varepsilon_N)\!).
\end{equation}
Then, following \cite{1997graber}, if we have a global equivariant embedding of a scheme $X$ into a nonsingular scheme $Y$ with $(\mathbb C^*)^N$ action, we can identify the maximal $(\mathbb C^*)^N-$fixed closed subscheme $X^f$ of $X$ with the scheme-theoretic intersection $X^f=X\cap Y^f$, where $Y^f$ is the nonsingular set-theoretic fixed point locus. By decomposing $Y^f$ into irreducible components $Y^f=\bigcup_iY_i$ one can also define $X_i=X\cap Y_i$, which carry a perfect obstruction theory with virtual fundamental class $[X_i]^{\rm vir}$. In this way, if $\tilde V\in K^0_{(\mathbb C^*)^N}(X)$ is an equivariant lift of the vector bundle $V$, $\tilde V_i$ is its restriction to $X_i$ and $p_i:X_i\to{\rm pt}$ is the projection, one has that
\begin{equation}\label{eul_loc}
\chi^{\rm vir}(X,\tilde V;\varepsilon_1,\dots,\varepsilon_N)=\sum_i p_{i*}^{\rm vir}\left(\tilde V_i/\Lambda_{-1}(N_i^{\rm vir})^\vee\right)=\sum_i p_{i*}^{\rm vir}\left(\tilde V_i/\Lambda_{-1}(T_X^{\rm vir}|_{X_i}^{\rm mov})^\vee\right)
\end{equation}
belongs to $\mathbb Q[\![\varepsilon_1,\dots,\varepsilon_N]\!]$ and the virtual holomorphic Euler characteristic is $\chi^{\rm vir}(X,V)=\chi^{\rm vir}(X,\tilde V;\boldsymbol 0)$.

Computations are now made very easy by the fact that we represented the virtual tangent space to the $T=(\mathbb C^*)^2-$fixed points to the moduli space of nested instantons in the representation ring $R(T)$ of the torus $(\mathbb C^*)^2$. In this way $T_{X_i}^{\rm vir}$ is decomposed as a direct sum of line bundles which are moreover eigenbundles of the torus action. Then we can use the following properties
\begin{equation}
\ch(E\oplus F)=\ch E+\ch F,\quad \Lambda_{t}(E\oplus F)=\Lambda_{t}(E)\cdot\Lambda_{t}(F),\quad S_t(E\oplus F)=S_t(E)\cdot S_t(F)
\end{equation}
and equation \eqref{eul_loc} in order to compute the equivariant holomorphic Euler characteristic of the moduli space of nested instantons in terms of the fundamental characters $\mathfrak q_{1,2}$ of the torus $T$. These will be related to the equivariant parameters by $\mathfrak q_i=\eu^{\beta\varepsilon_i}$, with $\beta$ being a parameter having a very clear meaning in the physical framework modelling the moduli space of nested instantons as a low energy effective theory. In this framework is very easy to explicitly compute the virtual equivariant holomorphic Euler characteristic of the moduli space of nested instantons as we already described the $T-$fixed locus of $\mathcal N(r,n_0,\dots,n_N)$ as being $0-$dimensional and non-degenerate. As we saw in section \ref{sec:fixedpts} the fixed points of $\mathcal N(r,n_0,\dots,n_N)$ are completely described by $r-$tuples of nested coloured partitions $\boldsymbol\mu_1\subseteq\cdots\subseteq\boldsymbol\mu_N\subseteq\boldsymbol\mu_0$, with $\boldsymbol\mu_j\in\mathcal P^r$, in such a way that $|\boldsymbol\mu_0|=\sum_j|\boldsymbol\mu_0^j|=n_0$ and $|\boldsymbol\mu_0\setminus\boldsymbol\mu_{i>0}|=n_{i>0}$. In the simplest case of $r=1$ we get
\begin{equation}
\begin{split}
\chi^{\rm vir}(\mathcal N(1,n_0,\dots,n_N),\tilde V;\mathfrak q_1,\mathfrak q_2)=&\sum_{\substack{\mu_1\subseteq\cdots\subseteq\mu_0\\ |\mu_0\setminus\mu_j|=n_j}}\ddfrac{T_{\mu_0,\mu_1}(\mathfrak q_1,\mathfrak q_2)W_{\mu_0,\dots,\mu_N}(\mathfrak q_1,\mathfrak q_2)}{N_{\mu_0}(\mathfrak q_1,\mathfrak q_2)})\left.\left[\tilde V\right]\right|_{\mu_0,\dots,\mu_N},
\end{split}\label{uno}
\end{equation}
where $a(s)$ and $l(s)$ denote the arm length and the leg length of the box $s$ in the Young diagram $Y_{\mu}$ associated to $\mu$, respectively. We moreover defined
\begin{align}
N_{\mu_0}(\mathfrak q_1,\mathfrak q_2)&=\prod_{s\in Y_{\mu_0}}\left(1-\mathfrak q_1^{-l(s)-1}\mathfrak q_2^{a(s)}\right)\left(1-\mathfrak q_1^{l(s)}\mathfrak q_2^{-a(s)-1}\right),\\
T_{\mu_0,\mu_1}(\mathfrak q_1,\mathfrak q_2)&=\prod_{i=1}^{M_0}\prod_{j=1}^{\mu_{0,i}'-\mu_{1,i}'}\left(1-\mathfrak q_1^{-i}\mathfrak q_2^{-j-\mu_{1,i}'}\right),\\
W_{\mu_0,\dots,\mu_N}(\mathfrak q_1,\mathfrak q_2)&=\prod_{k=1}^{N}\prod_{i=1}^{M_0}\prod_{j=1}^{N_0}\ddfrac{\left(1-\mathfrak q_1^{\mu_{k,j}-i}\mathfrak q_2^{j-\mu_{0,i}'-1}\right)\left(1-\mathfrak q_1^{\mu_{k-1,j}-i}\mathfrak q_2^{j-\mu_{k,i}'-1}\right)}{\left(1-\mathfrak q_1^{\mu_{k,j}-i}\mathfrak q_2^{j-\mu_{k,i}'-1}\right)\left(1-\mathfrak q_1^{\mu_{k-1,j}-i}\mathfrak q_2^{j-\mu_{0,i}'-1}\right)}
\end{align}
A very interesting and surprising fact can be observed if we rearrange the expression the holomorphic virtual Euler characteristic of $\mathcal N(1,n_0,\dots,n_N)$. In fact if we perform the summation over the smaller partitions $\mu_1\subseteq\cdots\subseteq\mu_N$ and redefine $q=\mathfrak q_1^{-1}$, $t=\mathfrak q_2^{-1}$, we get
\begin{equation}
\chi^{\rm vir}(\mathcal N(1,n_0,\dots,n_N);\mathfrak q_1,\mathfrak q_2)=\sum_{\mu_0}\frac{P_{\mu_0}(q,t)}{N_{\mu_0}(q,t)}
\end{equation}
and the unexpected fact is that we think $P_{\mu_0}(q,t)$ to be a polynomial in $q,t$ except for a factor $(1-qt)^{-1}$.
\begin{conjecture}\label{conj:poly}
$P_{\mu_0}(q,t)$ is a a function of the form:
\begin{equation}\label{polynomials}
P_{\mu_0}(q,t)=\frac{Q_{\mu_0}(q,t)}{(1-qt)^N},
\end{equation}
with $Q_{\mu_0}(q,t)$ a polynomial in the $(q,t)-$variables.
\end{conjecture}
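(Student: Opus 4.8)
The plan is to control the pole locus of the finite sum
\[
P_{\mu_0}(q,t)=\sum_{\mu_1\subseteq\cdots\subseteq\mu_N\subseteq\mu_0}T_{\mu_0,\mu_1}(q,t)\,W_{\mu_0,\dots,\mu_N}(q,t)
\]
viewed as a rational function of $(q,t)$, and to show that its denominator divides $(1-qt)^N$. First I would record that $T_{\mu_0,\mu_1}$ is manifestly a polynomial in $q=\mathfrak q_1^{-1}$ and $t=\mathfrak q_2^{-1}$, since every one of its factors $1-\mathfrak q_1^{-i}\mathfrak q_2^{-j-\mu_{1,i}'}$ carries strictly negative exponents in $\mathfrak q_1,\mathfrak q_2$. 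Hence all poles of an individual summand originate in the denominator of $W_{\mu_0,\dots,\mu_N}$, whose factors are cyclotomic expressions $1-\mathfrak q_1^{a}\mathfrak q_2^{b}$ indexed by an explicit finite set of weights $(a,b)$ read off from the virtual tangent character \eqref{tan_rep_long_char}. The problem thus reduces to a residue analysis: for each weight $(a,b)$ occurring in a denominator I must show that the total residue of $P_{\mu_0}$ along the hypersurface $\mathfrak q_1^{a}\mathfrak q_2^{b}=1$ vanishes, with the sole exception of $(a,b)=(-1,-1)$.

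The weight $(a,b)=(-1,-1)$, which produces the factor $1-\mathfrak q_1^{-1}\mathfrak q_2^{-1}=1-qt$, is singled out geometrically: it is the diagonal weight $T_1T_2$ of the $(s-1)(T_1T_2)=N\,(T_1T_2)$ summand inserted in \eqref{tan_rep_long_char} to correct the traceless-commutator over-counting in the relations $[B_1^i,B_2^i]=0$. Since this correction contributes one copy of $T_1T_2$ for each of the $N$ nesting steps and enters every fixed-point contribution in the same way, its dual produces exactly the universal factor $(1-qt)^{-N}$, matching the denominator in \eqref{polynomials}. The entire content of the conjecture is therefore that \emph{all other} denominator factors disappear once the summation over the inner chain $\mu_1\subseteq\cdots\subseteq\mu_N$ has been performed.

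To prove this cancellation I would exploit the nearest-neighbour structure $W_{\mu_0,\dots,\mu_N}=\prod_{k=1}^{N}W^{(k)}_{\mu_{k-1},\mu_k,\mu_0}$, in which each factor couples only the consecutive partitions $\mu_{k-1}$ and $\mu_k$ together with the fixed outer datum $\mu_0$, and organise the sum as an iterated transfer-matrix/telescoping sum along the chain. In parallel I would use the geometric tower of forgetful maps $\eta_i:\mathcal N(1,n_0,\dots,n_i)\to\mathcal N(1,n_0,\dots,n_{i-1})$ of Section \ref{sec:moduli_space} together with the identification $\mathcal N(1,\bold n)\simeq\Hilb^{\hat{\bold n}}(\mathbb C^2)$ of Theorem \ref{thm:iso_nhs}: under this identification the summand $P_{\mu_0}/N_{\mu_0}$ collects all fixed points lying over the monomial ideal corresponding to $\mu_0$ under the projection $\mathcal N(1,\bold n)\to\Hilb^{n_0}(\mathbb C^2)$, so that $P_{\mu_0}$ itself is the equivariant virtual holomorphic Euler characteristic of the fibre over $\mu_0$. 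Realising $Q_{\mu_0}=P_{\mu_0}\,(1-qt)^N$ as an honest equivariant $K$-theory class of that fibre, and hence a (Laurent) polynomial, would establish the statement.

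The main obstacle is precisely this residue cancellation: in contrast to the universal $(1-qt)$ factor, the weights $(a,b)\neq(-1,-1)$ enter individual summands with partition-dependent multiplicities and no single summand is pole-free, so the vanishing is a genuine property of the sum and not of its terms. I expect the cleanest route to be the construction of a sign-reversing involution on the set of chains $\mu_1\subseteq\cdots\subseteq\mu_N\subseteq\mu_0$ pairing terms with opposite residues along each spurious hypersurface $\mathfrak q_1^{a}\mathfrak q_2^{b}=1$. The subsequent conjectural identification of $Q_{\mu_0}$ with Hall pairings of modified Macdonald polynomials and $(q,t)$-Kostka numbers strongly suggests that such an involution is governed by the same $(q,t)$-symmetry that underlies the polynomiality of $\widetilde H_{\mu_0}(\bold x;q,t)$; a promising variant is therefore to recast the fibrewise computation inside the representation theory of the double affine Hecke algebra and to deduce polynomiality of $Q_{\mu_0}$ from that of the modified Macdonald polynomials.
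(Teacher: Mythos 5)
The statement you are addressing is not proved in the paper: it is stated as Conjecture \ref{conj:poly} and the authors only report a numerical verification up to $n_0=10$. So there is no ``paper proof'' to compare against, and your proposal must stand on its own as an attempted proof. It does not yet do so. Your reduction of the problem is sound as far as it goes: $T_{\mu_0,\mu_1}$ is indeed a polynomial in $q=\mathfrak q_1^{-1}$, $t=\mathfrak q_2^{-1}$, all poles of an individual summand sit in the denominator of $W_{\mu_0,\dots,\mu_N}$, and the content of the conjecture is that every pole hypersurface $\mathfrak q_1^a\mathfrak q_2^b=1$ with $(a,b)\neq(-1,-1)$ becomes removable after summing over the chain $\mu_1\subseteq\cdots\subseteq\mu_N$. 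But that cancellation is precisely the theorem, and your proposal stops exactly there: the ``sign-reversing involution on chains pairing opposite residues'' is announced as an expectation, not constructed, and the fallback via double affine Hecke algebras and polynomiality of $\widetilde H_{\mu_0}$ is circular in spirit, since the link between $Q_{\mu_0}$ and modified Macdonald polynomials is itself only the paper's second conjecture (and only for a special nesting profile). A sketch that defers its only nontrivial step to an unconstructed combinatorial device is a research plan, not a proof.

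Two further points deserve caution. First, your attribution of the full $(1-qt)^{-N}$ denominator to the $(s-1)(T_1T_2)$ correction term in \eqref{tan_rep_long_char} is not forced by the formulas: the denominator of $W_{\mu_0,\dots,\mu_N}$ contains factors $1-\mathfrak q_1^{\mu_{k,j}-i}\mathfrak q_2^{j-\mu_{k,i}'-1}$ that equal $1-qt$ for suitable boxes, with partition-dependent multiplicities, so the surviving power of $(1-qt)$ in $P_{\mu_0}$ could a priori receive contributions from $W$ as well; you would need to prove that these extra $(1-qt)$ factors cancel down to exactly the universal power $N$, which is again the same unproved cancellation. Second, the geometric reformulation of $P_{\mu_0}$ as the equivariant invariant of the fibre of $\mathcal N(1,\bold n)\to\mathcal M(1,n_0)$ over the monomial ideal $I_{\mu_0}$ does not by itself yield polynomiality of $Q_{\mu_0}$: that fibre is in general singular and non-proper, its virtual class is not defined intrinsically by restriction, and equivariant holomorphic Euler characteristics of non-proper spaces are rational functions of the weights, not Laurent polynomials. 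Making $Q_{\mu_0}$ an ``honest $K$-theory class'' of the fibre is therefore another assertion that would itself require proof. In short, the proposal correctly isolates where the difficulty lies but does not close it; the conjecture remains open both in the paper and in your write-up.
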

Sometimes the polynomials in \eqref{polynomials} can be given an interpretations in terms of some known symmetric polynomials. In fact, let us define the following generating function
\begin{equation}
    Z_{MD}(q,t;x_0,\dots,x_N)=\sum_{n_0\ge\dots\ge n_N}\chi^{\rm vir}(\mathcal N(1,\tilde n_0,\dots,\tilde n_N);q,t)\prod_{i=0}^N x_i^{m_i},
\end{equation}
where $m_i=n_i-n_{i+1}$ and the integers $\tilde n_i$ form a sequence obtained from $n_i$ by asking the integers $\tilde m_i=\tilde n_i-\tilde n_{i+1}$ to be ordered. By construction $Z_{MD}(q,t;x_0,\dots,x_N)\in\mathbb Q[q,t]\otimes_{\mathbb Z}\Lambda(\bold x)$, \ie it is a symmetric function in $\{x_i\}_{i=0}^N$ with coefficients in $\mathbb Q[q,t]$. By conjecture \ref{conj:poly} we have
\begin{equation}
\begin{split}
    Z_{MD}(q,t;x_0,\dots,x_N)&=\sum_{n_0\ge\cdots\ge n_N}\sum_{\mu_0\in\mathcal P(n_0)}\frac{Q_{\mu_0}(q,t)}{(1-qt)^NN_{\mu_0}(q,t)}\prod_{i=0}^Nx_i^{m_i}\\
    &=\sum_{\mu\in\mathcal P}\frac{Q_{\mu}(q,t)}{(1-qt)^NN_{\mu}(q,t)}m_\mu(\bold x).
\end{split}
\end{equation}

\begin{conjecture}
When $|\mu_0|=|\mu_N|+1=|\mu_{N-1}|+2=\cdots=|\mu_1|+N$ we have
\begin{equation}
\begin{split}
        Q_{\mu_0}(q,t)&=\left\langle h_{\mu_0}(\bold x),\widetilde{H}_{\mu_0}(\bold x;q,t)\right\rangle\\
        &=\left\langle h_{\mu_0}(\bold x),\sum_{\lambda,\nu\in\mathcal P(n_0)}\widetilde{K}_{\lambda,\mu_0}(q,t)K_{\mu_0,\nu}m_\nu(\bold x)\right\rangle\\
        &=\sum_{\substack{\lambda\in\mathcal P(n_0)\\ m_\lambda(\bold x)\neq 0}}\widetilde{K}_{\lambda,\mu_0}(q,t),
\end{split}
\end{equation}
where the Hall pairing $\langle-,-\rangle$ is such that $\langle h_\mu,m_\lambda\rangle=\delta_{\mu,\lambda}$ and $\widetilde H_{\mu}(\bold x;q,t)$, $\widetilde K_{\lambda,\mu}(q,t)$ are the modified Macdonald polynomials and the modified Kostka polynomials, respectively.
\end{conjecture}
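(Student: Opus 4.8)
The plan is to deduce the identity from the geometric realisation of modified Macdonald polynomials, combined with the scheme isomorphism of Theorem \ref{thm:iso_nhs}. First I would unwind the nesting hypothesis: $|\mu_0|=|\mu_N|+1=\cdots=|\mu_1|+N$ forces $n_i=N-i+1$ for $i>0$, so by Theorem \ref{thm:iso_nhs} the relevant space is the unit-jump nested Hilbert scheme $\Hilb^{(n_0,n_0-1,\dots,n_0-N)}(\mathbb C^2)$, in which exactly one point is removed at each step. By Proposition \ref{fixed_pts_longtail} its $T$-fixed locus is isolated, so the virtual localisation formula \eqref{eul_loc} applies verbatim (smoothness, where available by the results of Cheah, only simplifies the bookkeeping and is not essential).

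Next I would reorganise the localisation sum \eqref{uno} for fixed $\mu_0$. The inner sum runs over chains $\mu_1\subseteq\cdots\subseteq\mu_N\subseteq\mu_0$ with unit jumps, and these are precisely standard Young tableaux on the skew shapes $\mu_0/\mu_1$, as $\mu_1$ ranges over the sub-partitions obtained by deleting $N$ boxes from $\mu_0$. The weight $T_{\mu_0,\mu_1}W_{\mu_0,\dots,\mu_N}/N_{\mu_0}$ is assembled from the same arm–leg data $a(s),l(s)$ that governs Macdonald theory, and the factor $(1-qt)^{-N}$ isolated in Conjecture \ref{conj:poly} should be identified with the diagonal contribution of the $N$ added boxes. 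The aim of this step is to recognise the coupled generating function $Z_{MD}(q,t;\mathbf x)$, read through the tower of forgetful maps $\eta_i$, as the bigraded Frobenius series $\sum_{\mu}\widetilde H_{\mu}(\mathbf x;q,t)/N_{\mu}(q,t)$.

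The cleanest route to that recognition is Haiman's $n!$-theorem: $\widetilde H_{\mu_0}(\mathbf x;q,t)$ is the bigraded character of the fibre of the Procesi bundle at the fixed ideal $I_{\mu_0}\in\Hilb^{n_0}(\mathbb C^2)$, and the unit-jump flag Hilbert scheme is exactly the space over which that bundle is built by successive single-box extensions. I would therefore show that the equivariant pushforward of the virtual structure sheaf along the tower $\eta_N,\dots,\eta_1$ realises the associated graded of the Procesi fibre, so that the $\mathbf x$-coupling converts the chain sum into $\widetilde H_{\mu_0}$. A key point is that the flag structure weights each irreducible type by a single branching path (one standard tableau stripped off at a time) rather than by its full multiplicity, which is precisely why the naive $K_{\lambda,\mu_0}$-multiplicities must collapse; applying the Hall pairing $\langle h_{\mu_0},-\rangle$ and using $K_{\mu_0,\mu_0}=1$ then isolates $Q_{\mu_0}(q,t)=\sum_{\lambda}\widetilde K_{\lambda,\mu_0}(q,t)$.

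The hard part will be the geometric matching in the third step: identifying the pushforward of the virtual structure sheaf of the unit-jump nested Hilbert scheme with the character of the Procesi fibre, with every torus weight tracked so that the arm–leg products $T\cdot W/N$ reassemble into the inv/maj weighting of $\widetilde H_{\mu_0}$. A purely combinatorial alternative would bypass Haiman and prove the chain-sum identity directly from the Haglund–Haiman–Loehr formula, either by a sign-reversing involution cancelling the non-tableau fillings or by a Pieri-type recursion on the single added box. In either approach, establishing that the spurious $K_{\lambda,\mu_0}$-multiplicities collapse to $1$ is the delicate step, and it is exactly this collapse that makes the statement a conjecture rather than a theorem.
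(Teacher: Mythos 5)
The statement you set out to prove is, in the paper, a \emph{conjecture}: the authors give no proof and say only that they have checked it numerically up to $n_0=10$ (the same is true of Conjecture \ref{conj:poly}, on which your argument also leans). So there is no paper proof to compare against, and your proposal does not supply one either. The preliminary reductions you make are sound and consistent with the paper's setup: the unit-jump hypothesis does identify the relevant space with the nested Hilbert scheme $\Hilb^{(n_0,n_0-1,\dots,n_0-N)}(\mathbb C^2)$ via Theorem \ref{thm:iso_nhs}, the fixed locus is indeed indexed by chains of partitions with single-box jumps (equivalently, standard Young tableaux of the skew shapes $\mu_0/\mu_1$), and the second and third equalities in the conjectured chain are routine symmetric-function manipulations once the first is granted. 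The entire content is the first equality, $Q_{\mu_0}(q,t)=\langle h_{\mu_0},\widetilde H_{\mu_0}\rangle$, and that is precisely where your argument stops being an argument.

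Concretely, three steps are asserted rather than established. First, the identification of the equivariant pushforward of the \emph{virtual} structure sheaf along the forgetful tower $\eta_N,\dots,\eta_1$ with the (associated graded of the) Procesi fibre is not a known theorem: Haiman's $n!$ theorem lives on the smooth isospectral Hilbert scheme, whereas $\mathcal N(1,\bold n)$ is singular and only virtually smooth, and nothing in the paper (or in the literature it cites) matches the localization weights $T_{\mu_0,\mu_1}W_{\mu_0,\dots,\mu_N}/N_{\mu_0}$ to the $\mathrm{inv}/\mathrm{maj}$ statistics of $\widetilde H_{\mu_0}$. Second, the identification of the prefactor $(1-qt)^{-N}$ with ``the diagonal contribution of the $N$ added boxes'' presupposes Conjecture \ref{conj:poly}, itself unproven. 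Third, the collapse of the Kostka multiplicities to $1$ after pairing with $h_{\mu_0}$ is, as you yourself say, the delicate point, and you offer only two candidate strategies (a sign-reversing involution on HHL fillings, or a Pieri-type recursion) without carrying either out. What you have written is a reasonable research program for attacking the conjecture, not a proof of it; the statement remains open.
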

We checked the previous conjectures up to $n_0=10$.

If instead $r>1$ we get a more complicated result, even though its structure is the same as we had previously
\begin{equation}\label{eul_char_r}
\begin{split}
\chi^{\rm vir}(\mathcal N(r,n_0,\dots,n_N),\tilde V;\mathfrak q_1,\mathfrak q_2,\{\mathfrak t_i\})=&\sum_{\substack{\boldsymbol\mu_1\subseteq\cdots\subseteq\boldsymbol\mu_0\\ |\boldsymbol\mu_0\setminus\boldsymbol\mu_j|=n_j}}\ddfrac{T_{\boldsymbol\mu_0,\boldsymbol\mu_1}^{(r)}(\mathfrak q_1,\mathfrak q_2)W_{\boldsymbol\mu_0,\dots,\boldsymbol\mu_N}^{(r)}(\mathfrak q_1,\mathfrak q_2}{N_{\boldsymbol\mu_0}^{(r)}(\mathfrak q_1,\mathfrak q_2)})\left.\left[\tilde V\right]\right|_{\boldsymbol\mu_0,\dots,\boldsymbol\mu_N},
\end{split}
\end{equation}
with
\begin{align}
N_{\boldsymbol\mu_0}^{(r)}(\mathfrak q_1,\mathfrak q_2)&=\prod_{a,b=1}^r\prod_{s\in Y_{\mu_0^{(a)}}}\left(1-\mathfrak t_{ab}\mathfrak q_1^{-l_a(s)-1}\mathfrak q_2^{a_b(s)}\right)\left(1-\mathfrak q_1^{l_a(s)}\mathfrak q_2^{-a_b(s)-1}\right),\\
T_{\boldsymbol\mu_0,\boldsymbol\mu_1}^{(r)}(\mathfrak q_1,\mathfrak q_2)&=\prod_{a,b}^r\prod_{i=1}^{M_0^{(a)}}\prod_{j=1}^{\mu_{0,i}^{(a)\prime}-\mu_{1,i}^{(a)\prime}}\left(1-\mathfrak t_{ab}\mathfrak q_1^{-i}\mathfrak q_2^{-j-\mu_{1,i}^{(a)\prime}}\right),\\
W_{\boldsymbol\mu_0,\dots,\boldsymbol\mu_N}^{(r)}(\mathfrak q_1,\mathfrak q_2)&=\prod_{k=1}^{N}\prod_{a,b}^r\prod_{i=1}^{M_0^{(a)}}\prod_{j=1}^{N_0^{(b)}}\ddfrac{\left(1-\mathfrak t_{ab}\mathfrak q_1^{\mu_{k,j}^{(b)}-i}\mathfrak q_2^{j-\mu_{0,i}^{(a)\prime}-1}\right)\left(1-\mathfrak t_{ab}\mathfrak q_1^{\mu_{k-1,j}^{(b)}-i}\mathfrak q_2^{j-\mu_{k,i}^{(a)\prime}-1}\right)}{\left(1-\mathfrak t_{ab}\mathfrak q_1^{\mu_{k,j}^{(b)}-i}\mathfrak q_2^{j-\mu_{k,i}^{(a)\prime}-1}\right)\left(1-\mathfrak t_{ab}\mathfrak q_1^{\mu_{k-1,j}^{(b)}-i}\mathfrak q_2^{j-\mu_{0,i}^{(a)\prime}-1}\right)}
\end{align}
where now $\mathfrak t_{ab}=\mathfrak t_a\mathfrak t_b^{-1}$ and $\{\mathfrak t_i\}$ are the fundamental characters of $(\mathbb C^*)^r$ in $\mathcal G=(\mathbb C^*)^r\times T$, and $a_b(s)$ denotes the arm length of the box $s$ with respect to the Young diagram $Y_{\mu^{(b)}}$ associated to the partition $\mu^{(b)}$ of $\boldsymbol\mu$ (with an analogous definition for the leg length).

\subsection{Virtual equivariant $\chi_{-y}-$genus}\label{sec:hirzebruch}
The first refinement of the equivariant holomorphic Euler characteristic we are going to study is the virtual equivariant $\chi_{-y}-$genus, as defined in \cite{fantechi2010}. In order to exhibit the definition of virtual $\chi_{-y}-$genus let us first recall that if $E$ is a rank $r$ vector bundle on $r$ one can define the antisymmetric product $\Lambda_tE$ and the symmetric one $S_tE$ as
$$
\Lambda_tE=\sum_{i=0}^r[\Lambda^iE]t^i\in K^0(X)[t],\qquad S_tE=\sum_{i\ge 0}[S^iE]t^i\in K^0(X)[\![t]\!],
$$
so that $1/\Lambda_tE=S_{-t}E$ in $K^0(X)[\![t]\!]$. We can then define the virtual cotangent bundle $\Omega_X^{\rm vir}=(T_X^{\rm vir})^\vee$ and the bundle of virtual $n-$forms $\Omega_X^{n,{\rm vir}}=\Lambda^n\Omega_X^{\rm vir}$. If then $X$ is a proper scheme equipped with a perfect obstruction theory of virtual dimension $d$, the virtual $\chi_{-y}-$genus of $X$ is defined by
\begin{equation}
\chi_{-y}^{\rm vir}(X)=\chi^{\rm vir}(X,\Lambda_{-y}\Omega_X^{\rm vir})=\sum_{i\ge 0}(-y)^i\chi^{\rm vir}(X,\Omega^{i,{\rm vir}}_X),
\end{equation}
while, if $V\in K^0(X)$, the virtual $\chi_{-y}-$genus of $X$ with values in $V$ is
\begin{equation}
\chi_{-y}^{\rm vir}(X,V)=\chi^{\rm vir}(X,V\otimes\Lambda_{-y}\Omega_X^{\rm vir})=\sum_{i\ge 0}(-y)^i\chi^{\rm vir}(X,V\otimes\Omega^{i,{\rm vir}}_X).
\end{equation}
Though in principle one would expect $\chi_{-y}^{\rm vir}(X,V)$ to be an element of $\mathbb Z[\![t]\!]$, it is in fact true that $\chi_{-y}^{\rm vir}(X,V)\in\mathbb Z[t]$, \cite{fantechi2010}.

By the form \eqref{vir_hol_euler} and \eqref{vir_hol_euler_V} of the holomorphic Euler characteristic it is easy to see that
\begin{align}
& \chi_{-y}^{\rm vir}(X)=\int_{[X]^{\rm vir}}\ch(\Lambda_{-y}T_X^{\rm vir})\cdot\td(T_X^{\rm vir})=\int_{[X]^{\rm vir}}\mathcal X_{-y}(X),\\
& \chi_{-y}^{\rm vir}(X,V)=\int_{[X]^{\rm vir}}\ch(\Lambda_{-y}T_X^{\rm vir})\cdot\ch(V)\cdot\td(T_X^{\rm vir})=\int_{[X]^{\rm vir}}\mathcal X_{-y}(X)\cdot\ch(V),
\end{align}
which, in terms of the Chern roots of $E_0$, $E_1$ and $V$ become
\begin{align}
& \chi_{-y}^{\rm vir}(X)=\int_{[X]^{\rm vir}}\prod_{i=1}^nx_i\frac{1-y\eu^{-x_i}}{1-\eu^{-x_i}}\prod_{j=1}^m\frac{1}{u_j}\frac{1-\eu^{-u_j}}{1-y\eu^{-u_j}},\\
& \chi_{-y}^{\rm vir}(X,V)=\int_{[X]^{\rm vir}}\left(\sum_{k=1}^r\eu^{v_k}\right)\prod_{i=1}^nx_i\frac{1-y\eu^{-x_i}}{1-\eu^{-x_i}}\prod_{j=1}^m\frac{1}{u_j}\frac{1-\eu^{-u_j}}{1-y\eu^{-u_j}}.
\end{align}
Finally one can define the virtual Euler number $e^{\rm vir}(X)$ and the virtual signature $\sigma^{\rm vir}(X)$ of $X$ as $e^{\rm vir}(X)=\chi_{-1}^{\rm vir}(X)$ and $\sigma^{\rm vir}(X)=\chi_1^{\rm vir}(X)$. Whenever $y=0$ one recovers the holomorphic virtual Euler characteristic instead.

By extending the definition of $\chi_{-y}-$genus to the equivariant case in the obvious way and by making use of the equivariant virtual localization technique, one gets
\begin{equation}
\chi_{-y}^{\rm vir}(X,\tilde V;\varepsilon_1,\dots,\varepsilon_N)=\sum_ip_{i*}^{\rm vir}\left(\tilde V_i\otimes\Lambda_{-y}(\Omega_X^{\rm vir}|_{X_i})/\Lambda_{-1}(N_i^{\rm vir})^\vee\right),
\end{equation}
whence $\chi_{-y}^{\rm vir}(X,V)=\chi_{-y}^{\rm vir}(X,\tilde V;0,\dots,0)$.

A simple computation in equivariant localization gives us the following result:
\begin{equation}
\begin{split}
\chi^{\rm vir}_{-y}(\mathcal N(1,n_0,\dots,n_N),\tilde V;\mathfrak q_1,\mathfrak q_2)=&\sum_{\substack{\mu_1\subseteq\cdots\subseteq\mu_0\\ |\mu_0\setminus\mu_j|=n_j}}\ddfrac{T_{\mu_0,\mu_1}^{-y}(\mathfrak q_1,\mathfrak q_2)W_{\mu_0,\dots,\mu_N}^{-y}(\mathfrak q_1,\mathfrak q_2)}{N_{\mu_0}^{-y}(\mathfrak q_1,\mathfrak q_2)}\left.\left[\tilde V\right]\right|_{\mu_0,\dots,\mu_N},
\end{split} \label{due}
\end{equation}
with
\begin{align}
N_{\mu_0}^{-y}(\mathfrak q_1,\mathfrak q_2)&=\prod_{s\in Y_{\mu_0}}\ddfrac{\left(1-\mathfrak q_1^{-l(s)-1}\mathfrak q_2^{a(s)}\right)\left(1-\mathfrak q_1^{l(s)}\mathfrak q_2^{-a(s)-1}\right)}{\left(1-y\mathfrak q_1^{-l(s)-1}\mathfrak q_2^{a(s)}\right)\left(1-y\mathfrak q_1^{l(s)}\mathfrak q_2^{-a(s)-1}\right)},\\
T_{\mu_0,\mu_1}^{-y}(\mathfrak q_1,\mathfrak q_2)&=\prod_{i=1}^{M_0}\prod_{j=1}^{\mu_{0,i}'-\mu_{1,i}'}\ddfrac{\left(1-\mathfrak q_1^{-i}\mathfrak q_2^{-j-\mu_{1,i}'}\right)}{\left(1-y\mathfrak q_1^{-i}\mathfrak q_2^{-j-\mu_{1,i}'}\right)},\\ \nonumber
W_{\mu_0,\dots,\mu_N}^{-y}(\mathfrak q_1,\mathfrak q_2)&=\prod_{k=1}^{N}\prod_{i=1}^{M_0}\prod_{j=1}^{N_0}\ddfrac{\left(1-\mathfrak q_1^{\mu_{k,j}-i}\mathfrak q_2^{j-\mu_{0,i}'-1}\right)\left(1-\mathfrak q_1^{\mu_{k-1,j}-i}\mathfrak q_2^{j-\mu_{k,i}'-1}\right)}{\left(1-y\mathfrak q_1^{\mu_{k,j}-i}\mathfrak q_2^{j-\mu_{0,i}'-1}\right)\left(1-y\mathfrak q_1^{\mu_{k-1,j}-i}\mathfrak q_2^{j-\mu_{k,i}'-1}\right)}\cdot\\
&\cdot\ddfrac{\left(1-y\mathfrak q_1^{\mu_{k,j}-i}\mathfrak q_2^{k-\mu_{k,i}'-1}\right)\left(1-y\mathfrak q_1^{\mu_{k-1,j}-i}\mathfrak q_2^{j-\mu_{0,i}'-1}\right)}{\left(1-\mathfrak q_1^{\mu_{k,j}-i}\mathfrak q_2^{k-\mu_{k,i}'-1}\right)\left(1-\mathfrak q_1^{\mu_{k-1,j}-i}\mathfrak q_2^{j-\mu_{0,i}'-1}\right)}.
\end{align}
The limit $y\to 0$ manifestly reverts to the case of the equivariant holomorphic Euler characteristic of the moduli space of nested instantons.

A similar result holds also for the general case $r>1$:
\begin{equation}\label{chi_y_r}
\begin{split}
\chi^{\rm vir}_{-y}(\mathcal N(r,n_0,\dots,n_N),\tilde V;\mathfrak q_1,\mathfrak q_2,\{\mathfrak t_i\})=&\sum_{\substack{\boldsymbol\mu_1\subseteq\cdots\subseteq\boldsymbol\mu_0\\ |\boldsymbol\mu_0\setminus\boldsymbol\mu_j|=n_j}}\ddfrac{T_{\boldsymbol\mu_0,\boldsymbol\mu_1}^{(r),y}(\mathfrak q_1,\mathfrak q_2)W_{\boldsymbol\mu_0,\dots,\boldsymbol\mu_N}^{(r),y}(\mathfrak q_1,\mathfrak q_2)}{N_{\boldsymbol\mu_0}^{(r),y}(\mathfrak q_1,\mathfrak q_2)}\left.\left[\tilde V\right]\right|_{\boldsymbol\mu_0,\dots,\boldsymbol\mu_N},
\end{split}
\end{equation}
with
\begin{align}
N_{\boldsymbol\mu_0}^{(r),y}(\mathfrak q_1,\mathfrak q_2)&=\prod_{a,b=1}^r\prod_{s\in Y_{\mu_0^{(a)}}}\ddfrac{\left(1-\mathfrak t_{ab}\mathfrak q_1^{-l_a(s)-1}\mathfrak q_2^{a_b(s)}\right)\left(1-\mathfrak t_{ab}\mathfrak q_1^{l_a(s)}\mathfrak q_2^{-a_b(s)-1}\right)}{\left(1-y\mathfrak t_{ab}\mathfrak q_1^{-l_a(s)-1}\mathfrak q_2^{a_b(s)}\right)\left(1-y\mathfrak t_{ab}\mathfrak q_1^{l_a(s)}\mathfrak q_2^{-a_b(s)-1}\right)},\\
T_{\boldsymbol\mu_0,\boldsymbol\mu_1}^{(r),y}(\mathfrak q_1,\mathfrak q_2)&=\prod_{a,b}^r\prod_{i=1}^{M_0^{(a)}}\prod_{j=1}^{\mu_{0,i}^{(a)\prime}-\mu_{1,i}^{(a)\prime}}\ddfrac{\left(1-\mathfrak t_{ab}\mathfrak q_1^{-i}\mathfrak q_2^{-j-\mu_{1,i}^{(a)\prime}}\right)}{\left(1-y\mathfrak t_{ab}\mathfrak q_1^{-i}\mathfrak q_2^{-j-\mu_{1,i}^{(a)\prime}}\right)},\\ \nonumber
W_{\boldsymbol\mu_0,\dots,\boldsymbol\mu_N}^{(r),y}(\mathfrak q_1,\mathfrak q_2)&=\prod_{k=1}^{N}\prod_{a,b}^r\prod_{i=1}^{M_0^{(a)}}\prod_{j=1}^{N_0^{(b)}}\ddfrac{\left(1-\mathfrak t_{ab}\mathfrak q_1^{\mu_{k,j}^{(b)}-i}\mathfrak q_2^{j-\mu_{0,i}^{(a)\prime}-1}\right)\left(1-\mathfrak t_{ab}\mathfrak q_1^{\mu_{k-1,j}^{(b)}-i}\mathfrak q_2^{j-\mu_{k,i}^{(a)\prime}-1}\right)}{\left(1-y\mathfrak t_{ab}\mathfrak q_1^{\mu_{k,j}^{(b)}-i}\mathfrak q_2^{j-\mu_{0,i}^{(a)\prime}-1}\right)\left(1-y\mathfrak t_{ab}\mathfrak q_1^{\mu_{k-1,j}^{(b)}-i}\mathfrak q_2^{j-\mu_{k,i}^{(a)\prime}-1}\right)}\cdot\\
&\cdot\ddfrac{\left(1-y\mathfrak t_{ab}\mathfrak q_1^{\mu_{k,j}^{(b)}-i}\mathfrak q_2^{k-\mu_{k,i}^{(a)\prime}-1}\right)\left(1-y\mathfrak t_{ab}\mathfrak q_1^{\mu_{k-1,j}^{(b)}-i}\mathfrak q_2^{j-\mu_{0,i}^{(a)\prime}-1}\right)}{\left(1-\mathfrak t_{ab}\mathfrak q_1^{\mu_{k,j}^{(b)}-i}\mathfrak q_2^{k-\mu_{k,i}^{(a)\prime}-1}\right)\left(1-\mathfrak t_{ab}\mathfrak q_1^{\mu_{k-1,j}^{(b)}-i}\mathfrak q_2^{j-\mu_{0,i}^{(a)\prime}-1}\right)},
\end{align}
with the same notations of the previous section.

\subsection*{Virtual Euler number and signature}\addcontentsline{toc}{subsubsection}{Euler number and signature}
As we already pointed out previously, two specifications of the value of $y$ in the $\chi_{-y}-$genus, namely $y=\pm 1$, give back two interesting topological invariants of a given nested instantons moduli space. Let us consider first the simpler case of rank $1$. We can easily see, in the case of the virtual Euler number, that taking the specialization $y=+1$ amounts to counting the number of nested partitions of a given size. Then, if we assemble everything in a single generating function we have
\begin{equation}
\begin{split}
M(q_1,\dots,q_N)&=\sum_{j=0}^\infty\sum_{n_0,\dots,n_j} e^{\rm vir}\left(\mathcal N(1,n_0,\dots,n_j)\right)q_0^{n_0}\cdots q_j^{n_j}=\sum_{j=0}^\infty\sum_{n_0,\dots,n_j}\#\left\{\mu_1\subseteq\cdots\subseteq\mu_j\subseteq\mu_0\right\}q_0^{n_0}\cdots q_j^{n_j},
\end{split}
\end{equation}
for which an explicit expression is not available to our knowledge. Of course if we focus our attention on smooth nested Hilbert schemes only (\ie $n_{i>0}=0$ or $n_1=1$, $n_{i>1=0}$), the generating function of the virtual Euler number of smooth nested Hilbert schemes is easily expressed in terms of standard generating functions of partitions:
\begin{equation}
\sum_{n\ge 0} e^{\rm vir}(\mathcal N(1,n,1))q^n=\prod_{k=0}^\infty\left(\frac{1}{1-q^k}\right)=(\phi(q))^{-1}=\sum_{n\ge 0}\chi(\mathcal M(1,n))q^n,
\end{equation}
which, in the case of higher rank becomes
\begin{equation}
\sum_{n\ge 0} e^{\rm vir}(\mathcal N(r,n,1))q^n=\prod_{k=0}^\infty\left(\frac{1}{1-q^k}\right)^r=(\phi(1))^{-r}=\sum_{n\ge 0}\chi(\mathcal M(r,n))q^n.
\end{equation}
We also notice that whenever $q_0=q_1=\cdots=q_N$, generating function of Euler numbers is actually accounting for the enumeration of plane partition, whose generating function is known to be the Macmahon function $\Phi(-q)$:
\begin{equation}\label{tre}
M(q,\dots,q)=\sum_{j=0}^\infty\sum_{n_0,\dots,n_j}e^{\rm vir}(\mathcal N(1,n_0,\dots,n_j))q^{n_0+\cdots+n_j}=\Phi(-q).
\end{equation}

The case of the virtual signature is much easier instead. By taking $y=-1$ we immediately see that $\sigma^{\rm vir}(\mathcal N(1,\bold n))=0$.

\subsection{Virtual equivariant elliptic genus}\label{sec:elliptic}
A further refinement of the virtual $\chi_{-y}-$genus is finally given by the virtual elliptic genus. In this case, if $F$ is any vector bundle over $X$, we define
\begin{equation}
\mathcal E(F)=\bigotimes_{n\ge 1}\left(\Lambda_{-yq^n}F^\vee\otimes\Lambda_{-y^{-1}q^n}F\otimes S_{q^n}(F\oplus F^\vee)\right)\in 1+q\cdot K^0(X)[y,y^{-1}][\![q]\!],
\end{equation}
so that the virtual elliptic genus $\Ell^{\rm vir}(X;y,q)$ of $X$ is defined by
\begin{equation}
\Ell^{\rm vir}(X;y,q)=y^{-d/2}\chi_{-y}^{\rm vir}(X,\mathcal E(T_X^{\rm vir}))\in\mathbb Q(\!(y^{1/2})\!)[\![q]\!],
\end{equation}
and also
\begin{equation}
\Ell^{\rm vir}(X,V;y,q)=y^{-d/2}\chi_{-y}^{\rm vir}(X,\mathcal E(T_X^{\rm vir})\otimes V),
\end{equation}
By using virtual Riemann-Roch again one can see that $\Ell^{\rm vir}(X;y,q)$ admits an integral form
\begin{align}
& \Ell^{\rm vir}(X;y,q)=\int_{[X]^{\rm vir}}\Ellform(T_X^{\rm vir};y,q),\\
& \Ell^{\rm vir}(X,V;y,q)=\int_{[X]^{\rm vir}}\Ellform(T_X^{\rm vir};y,q)\cdot\ch(V),
\end{align}
with
\begin{equation}
\Ellform(F;y,q)=y^{-\rk F/2}\ch(\Lambda_{-y}F^\vee)\cdot\ch(\mathcal E(F))\cdot\td(F)\in A^*(X)[y^{-1/2},y^{1/2}][\![q]\!].
\end{equation}
It is also interesting to study how the virtual elliptic genus is described in terms of the usual Chern roots $x_i$, $u_j$, $v_k$, as its formula involves the Jacobi theta function $\theta(z,\tau)$ defined as
\begin{equation}
\theta(z,\tau)=q^{1/8}\frac{y^{1/2}-y^{-1/2}}{\iu}\prod_{l=1}^\infty(1-q^l)(1-q^ly)(1-q^ly^{-1}),
\end{equation}
where $q=\eu^{2\pi\iu\tau}$ and $y=\eu^{2\pi\iu z}$. In fact if $F$ is any vector bundle over $X$ with Chern roots $\{f_i\}$, one can prove \cite{Borisov2000} that
\begin{equation}
\Ellform(F;z,\tau)=\prod_{i=1}^{\rk F}f_i\frac{\theta(f_i/2\pi\iu-z,\tau)}{\theta(f_i/2\pi\iu,\tau)},
\end{equation}
so that finally
\begin{align}
& \Ell^{\rm vir}(X;y,q)=\int_{[X]^{\rm vir}}\prod_{i=1}^{n}x_i\frac{\theta(x_i/2\pi\iu-z,\tau)}{\theta(x_i/2\pi\iu,\tau)}\prod_{j=1}^{m}\frac{1}{u_j}\frac{\theta(u_j/2\pi\iu,\tau)}{\theta(u_j/2\pi\iu-z,\tau)},\\
& \Ell^{\rm vir}(X,V;y,q)=\int_{[X]^{\rm vir}}\left(\sum_{k=1}^r\eu^{v_k}\right)\prod_{i=1}^{n}x_i\frac{\theta(x_i/2\pi\iu-z,\tau)}{\theta(x_i/2\pi\iu,\tau)}\prod_{j=1}^{m}\frac{1}{u_j}\frac{\theta(u_j/2\pi\iu,\tau)}{\theta(u_j/2\pi\iu-z,\tau)}.
\end{align}

Finally, by taking the same steps as in the previous paragraphs we can equivariantly extend the definition of the virtual elliptic genus, and by virtual localization find that
\begin{equation}
\Ell^{\rm vir}(X,\tilde V,z,\tau;\varepsilon_1,\dots,\varepsilon_N)=y^{-\vd/2}\sum_ip_{i*}^{\rm vir}\left(\tilde V_i\otimes\mathcal E(T_X^{\rm vir}\otimes\Lambda_{-y}(\Omega_X^{\rm vir}|_{X_i})/\Lambda_{-1}(N_i^{\rm vir})^\vee\right)
\end{equation}
and $\Ell^{\rm vir}(X,V)=\Ell^{\rm vir}(X,\tilde V;0,\dots,0)$. In particular we get in rank 1
\begin{equation}\label{quattro}
\begin{split}
\Ell^{\rm vir}(\mathcal N(1,n_0,\dots,n_N),\tilde V;\varepsilon,\varepsilon_2)=&\sum_{\substack{\mu_1\subseteq\cdots\subseteq\mu_0\\ |\mu_0\setminus\mu_j|=n_j}}\ddfrac{\mathcal T_{\mu_0,\mu_1}^{z,\tau}(\varepsilon_1,\varepsilon_2)\mathcal W_{\mu_0,\dots,\mu_N}^{z,\tau}(\varepsilon_1,\varepsilon_2)}{\mathcal N_{\mu_0}^{z,\tau}(\varepsilon_1,\varepsilon_2)})\left.\left[\tilde V\right]\right|_{\mu_0,\dots,\mu_N},
\end{split}
\end{equation}
with
\begin{align}
\mathcal N_{\mu_0}^{z,\tau}(\varepsilon_1,\varepsilon_2)&=\prod_{s\in Y_{\mu_0}}\left[\ddfrac{\theta(\epsilon_1(l(s)+1)-\epsilon_2a(s),\tau)}{\theta(\epsilon_1(l(s)+1)-\epsilon_2a(s)-z,\tau)}\cdot\right.\\
&\qquad\cdot\left.\ddfrac{\theta(-\epsilon_1l(s)+\epsilon_2(a(s)+1),\tau)}{\theta(-\epsilon_1l(s)+\epsilon_2(l(s)+1)-z,\tau)}\right],\\
\mathcal T_{\mu_0,\mu_1}^{z,\tau}(\varepsilon_1,\varepsilon_2)&=\prod_{i=1}^{M_0}\prod_{j=1}^{\mu_{0,i}^{\prime}-\mu_{1,i}^{\prime}}\ddfrac{\theta(\epsilon_1i+\epsilon_2(j+\mu_{1,i}')-z,\tau)}{\theta(\epsilon_1i+\epsilon_2(j+\mu_{1,i}'),\tau)},\\ \nonumber
\mathcal W_{\mu_0,\dots,\mu_N}^{z,\tau}(\varepsilon_1,\varepsilon_2)&=\prod_{k=1}^{N}\prod_{i=1}^{M_0}\prod_{j=1}^{N_0}\left[\ddfrac{\theta(\epsilon_1(i-\mu_{k,j})+\epsilon_2(1+\mu_{0,i}'-j),\tau)}{\theta(\epsilon_1(i-\mu_{k,j})+\epsilon_2(1+\mu_{0,i}'-j)-z,\tau)}\cdot\right.\\ \nonumber
&\qquad\qquad\qquad\cdot\ddfrac{\theta(\epsilon_1(i-\mu_{k-1,j})+\epsilon_2(1+\mu_{k,i}'-j),\tau)}{\theta(\epsilon_1(i-\mu_{k-1,j})+\epsilon_2(1+\mu_{k,i}'-j)-z,\tau)}\\ \nonumber
&\qquad\qquad\qquad\cdot\ddfrac{\theta(\epsilon_1(i-\mu_{k,j})+\epsilon_2(1+\mu_{k,i}'-j)-z,\tau)}{\theta(\epsilon_1(i-\mu_{k,j})+\epsilon_2(1+\mu_{k,i}'-j),\tau)}\\
&\qquad\qquad\qquad\left.\cdot\ddfrac{\theta(\epsilon_1(i-\mu_{k-1,j})+\epsilon_2(1+\mu_{0,i}'-j)-z,\tau)}{\theta(\epsilon_1(i-\mu_{k-1,j})+\epsilon_2(1+\mu_{0,i}'-j),\tau)}\right],
\end{align}
with $\epsilon_i=\varepsilon_i/2\pi\iu$.
One can easily see that the virtual elliptic genus we just computed is indeed a Jacobi form, and that its limit $\tau\to\iu\infty$ reproduces the $\chi_{-y}-$genus. Moreover by taking the limit $y\to 0$ in the $\chi_{-y}-$genus one can recover the virtual equivariant holomorphic Euler characteristic.

Finally, if we study the virtual equivariant elliptic genus in the more general case of rank $r\ge 1$, we get
\begin{equation}\label{ell_vir_r}
\begin{split}
\Ell^{\rm vir}(\mathcal N(r,n_0,\dots,n_N),\tilde V;\varepsilon,\varepsilon_2,\{a_i\})=&\sum_{\substack{\boldsymbol\mu_1\subseteq\cdots\subseteq\boldsymbol\mu_0\\ |\boldsymbol\mu_0\setminus\boldsymbol\mu_j|=n_j}}\ddfrac{\mathcal T_{\boldsymbol\mu_0,\boldsymbol\mu_1}^{z,\tau}(\varepsilon_1,\varepsilon_2)\mathcal W_{\boldsymbol\mu_0,\dots,\boldsymbol\mu_N}^{z,\tau}(\varepsilon_1,\varepsilon_2)}{\mathcal N_{\boldsymbol\mu_0}^{z,\tau}(\varepsilon_1,\varepsilon_2)})\left.\left[\tilde V\right]\right|_{\boldsymbol\mu_0,\dots,\boldsymbol\mu_N},
\end{split}
\end{equation}
with
\begin{align}
\mathcal N_{\boldsymbol\mu_0}^{z,\tau}(\varepsilon_1,\varepsilon_2)&=\prod_{a,b=1}^r\prod_{s\in Y_{\mu_0}}\left[\ddfrac{\theta(a_{ab}+\epsilon_1(l(s)+1)-\epsilon_2a(s),\tau)}{\theta(a_{ab}+\epsilon_1(l(s)+1)-\epsilon_2a(s)-z,\tau)}\cdot\right.\\ \nonumber
&\qquad\qquad\qquad\cdot\left.\ddfrac{\theta(a_{ab}+-\epsilon_1l(s)+\epsilon_2(a(s)+1),\tau)}{\theta(a_{ab}+-\epsilon_1l(s)+\epsilon_2(l(s)+1)-z,\tau)}\right],\\
\mathcal T_{\boldsymbol\mu_0,\boldsymbol\mu_1}^{z,\tau}(\varepsilon_1,\varepsilon_2)&=\prod_{a,b=1}^r\prod_{i=1}^{M_0^{(a)}}\prod_{j=1}^{\mu_{0,i}^{(a)\prime}-\mu_{1,i}^{(a)\prime}}\ddfrac{\theta(a_{ab}+\epsilon_1i+\epsilon_2(j+\mu_{1,i}^{(a)\prime}),\tau)}{\theta(a_{ab}+\epsilon_1i+\epsilon_2(j+\mu_{1,i}^{(a)\prime})-z,\tau)},\\ \nonumber
\mathcal W_{\boldsymbol\mu_0,\dots,\boldsymbol\mu_N}^{z,\tau}(\varepsilon_1,\varepsilon_2)&=\prod_{k=1}^{N}\prod_{a,b=1}^r\prod_{i=1}^{M_0^{(a)}}\prod_{j=1}^{N_0^{(b)}}\left[\ddfrac{\theta(a_{ab}+\epsilon_1(i-\mu_{k,j}^{(b)})+\epsilon_2(1+\mu_{0,i}^{(a)\prime}-j),\tau)}{\theta(a_{ab}+\epsilon_1(i-\mu_{k,j}^{(b)})+\epsilon_2(1+\mu_{0,i}^{(a)\prime}-j)-z,\tau)}\cdot\right.\\ \nonumber
&\qquad\qquad\qquad\qquad\cdot \ddfrac{\theta(a_{ab}+\epsilon_1(i-\mu_{k-1,j}^{(b)})+\epsilon_2(1+\mu_{k,i}-j)^{(a)\prime},\tau)}{\theta(a_{ab}+\epsilon_1(i-\mu_{k-1,j}^{(b)})+\epsilon_2(1+\mu_{k,i}^{(a)\prime}-j)-z,\tau)}\cdot\\ \nonumber
&\qquad\qquad\qquad\qquad\cdot\ddfrac{\theta(a_{ab}+\epsilon_1(i-\mu_{k,j}^{(b)})+\epsilon_2(1+\mu_{k,i}^{(a)\prime}-j)-z,\tau)}{\theta(a_{ab}+\epsilon_1(i-\mu_{k,j}^{(b)})+\epsilon_2(1+\mu_{k,i}^{(a)\prime}-j),\tau)}\cdot\\
&\qquad\qquad\qquad\qquad\left.\cdot\ddfrac{\theta(a_{ab}+\epsilon_1(i-\mu_{k-1,j}^{(b)})+\epsilon_2(1+\mu_{0,i}^{(a)\prime}-j)-z,\tau)}{\theta(a_{ab}+\epsilon_1(i-\mu_{k-1,j}^{(b)})+\epsilon_2(1+\mu_{0,i}^{(a)\prime}-j),\tau)}\right].
\end{align}

Notice that by knowing the equivariant virtual elliptic genus one is able to recover both the virtual equivariant holomorphic Euler characteristic and $\chi_{-y}-$ genus. In fact the limit $\tau\to\iu\infty$ of \eqref{ell_vir_r} recovers exactly the $\chi_{-y}-$genus found in \eqref{chi_y_r} and a successive limit $y\to 0$ gives us back the virtual equivariant holomorphic Euler characteristic \eqref{eul_char_r}.


\section{Toric surfaces}\label{sec4}
In this section we will generalize the results we got in the previous ones to the case of nested Hilbert schemes on toric surfaces, and in particular we will be interested in $\mathbb P^2$ and $\mathbb P^1\times\mathbb P^1$. This is because one might expect any complex genus of $\Hilb^{(\bold n)}(S)$ to depend only on the cobordism class of $S$, as it was the case for $\Hilb^n(S)$, \cite{goettsche_ellisgrund}, and the complex cobordism ring $\Omega=\Omega^U\otimes\mathbb Q$ with rational coefficients was showed by Milnor to be a polynomial algebra freely generated by the cobordism classes $[\mathbb P^n]$, $n>0$. Then in the case of complex projective surfaces any case can be reduced to $\mathbb P^2$ and $\mathbb P^1\times\mathbb P^1$ by the fact that $[S]=a[\mathbb P^2]+b[\mathbb P^1\times\mathbb P^1]$. The advantage given by having an ADHM-like construction for the nested punctual Hilbert scheme on the affine plane is that it provides us with the local model of the more general case of smooth projective surfaces. In particular, whenever $S$ is toric, one can construct it starting from its toric fan by appropriately gluing the affine patches (\eg fig. \ref{fig:fansP2} for $\mathbb P^2$ and \ref{fig:fansP1P1} for $\mathbb P^1\times\mathbb P^1$), and computation of topological invariants can still be easily carried out by means of equivariant (virtual) localization.
\begin{figure}[htb!]
\centering
\begin{subfigure}{.45\textwidth}
\begin{tikzpicture}
\draw[fill=red!20,draw opacity=0] (0,0)--(4,0)--(0,4)--(0,0);
\draw[fill=red!10,draw opacity=0] (0,0)--(0,4)--(3*cos{225},3*sin{225})--(0,0);
\draw[fill=red!30,draw opacity=0] (0,0)--(4,0)--(3*cos{225},3*sin{225})--(0,0);
\draw[->,line width=1.5] (0,0)--(2,0) node[above,pos=.9] {$e_1$};
\draw[->,line width=1.5] (0,0)--(0,2) node[right,pos=.9] {$e_2$};
\draw[->,line width=1.5] (0,0)--(1.5*cos{225},1.5*sin{225}) node[above,pos=.8,rotate=45] {$e_2-e_1$};
\draw[dashed] (2,0)--(4,0);
\draw[dashed] (0,2)--(0,4);
\draw[dashed] (1.5*cos{225},1.5*sin{225})--(3*cos{225},3*sin{225});
\end{tikzpicture}\caption{$\mathbb P^2$}\label{fig:fansP2}
\end{subfigure}
\begin{subfigure}{.45\textwidth}
\begin{tikzpicture}[scale=.8]
\draw[fill=red!30,draw opacity=0] (0,0)--(4,0)--(0,4)--(0,0)--(-4,0)--(0,-4)--(0,0);
\draw[fill=red!10,draw opacity=0] (0,0)--(0,4)--(-4,0)--(0,0)--(0,-4)--(4,0)--(0,0);
\draw[dashed] (2,0)--(4,0);
\draw[dashed] (-2,0)--(-4,0);
\draw[dashed] (0,2)--(0,4);
\draw[dashed] (0,-2)--(0,-4);
\draw[<->,line width=1.5] (-2,0)--(2,0) node[above,pos=1] {$e_1$} node[above,pos=0] {$-e_1$};
\draw[<->,line width=1.5] (0,-2)--(0,2) node[right,pos=1] {$e_2$} node[right,pos=0] {$-e_2$};
\end{tikzpicture}\caption{$\mathbb P^1\times\mathbb P^1$}\label{fig:fansP1P1}
\end{subfigure}\caption{Toric fans for $\mathbb P^2$ and $\mathbb P^1\times\mathbb P^1.$}\label{fig:fans}
\end{figure}
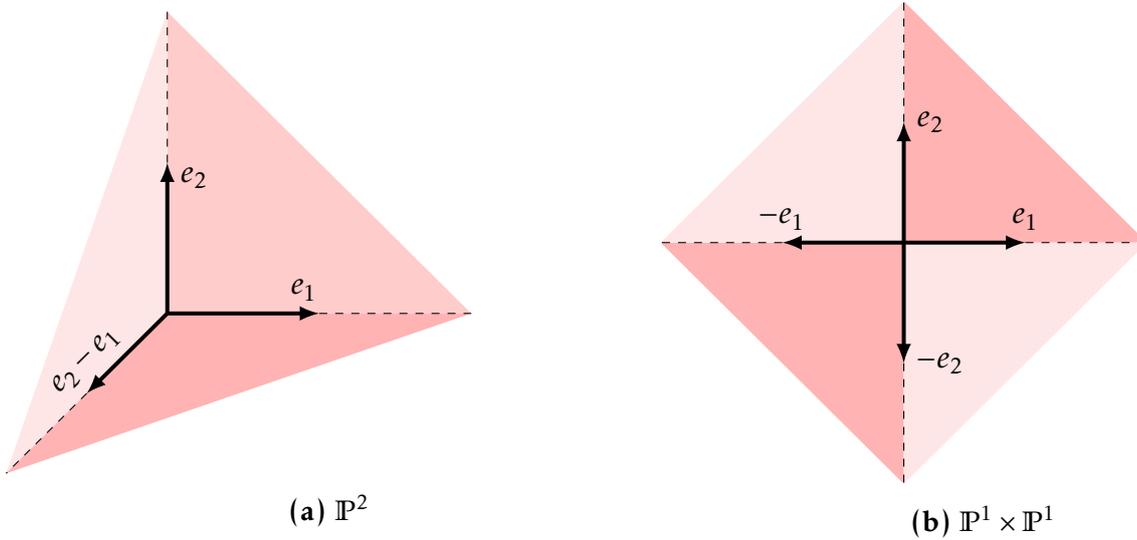
In general, given the toric fan describing the patches which glued together makes up a toric surface $S$, each patch $U_i$ will be $U_i\simeq\mathbb C^2$, with a natural action of $T=(\mathbb C^*)^2$. Moreover, if $S=\mathbb P^2$ or $S=\mathbb P^1\times\mathbb P^1$ and $Z\in\Hilb^{(\bold n)}(S)$ is a fixed point of the $T-$action, its support must be contained in $\{P_0,\dots,P_{\chi(S)-1}\}$ (as a consequence of \cite{ellisgrund_stromme}) with $P_i$ corresponding to the vertices of the polytope associated to the fan, so that one can write in general that $Z=Z_0\cup\cdots\cup Z_{\chi(S)-1}$, with $Z_i$ being supported in $P_i$. This also induces a decomposition of the representation in $R(T)$ of the virtual tangent space at the fixed points:
\begin{equation}
T^{\rm vir}_Z\left(\Hilb^{(\bold n)}(S)\right)=\bigoplus_{\ell=0}^{\chi(S)-1}T^{\rm vir}_{Z_\ell}\left(\Hilb^{(\bold n_\ell)}(U_\ell)\right).
\end{equation}
Let us call then $\chi_{-y}^{\rm vir}(P_{\ell})=\chi_{-y}^{\rm vir}(\mathcal N(1,n_0^{(\ell)},\dots,n_N^{(\ell)});\mathfrak q_{1,(\ell)}\mathfrak q_{2,(\ell)})$: we will see how we will be able to compute $\chi_{-y}^{\rm vir}(\Hilb^{(\bold n)}(\mathbb P^2))$ and $\chi_{-y}^{\rm vir}(\Hilb^{(\bold n)}(\mathbb P^1\times\mathbb P^1))$ in terms of $\chi_{-y}^{\rm vir}(P_\ell)$.
\subsection{Case 1: $S=\mathbb P^2$}
We will be interested in the following generating function
\begin{equation}\label{gen_chiy_P2}
\sum_{\bold n}\chi_{-y}^{\rm vir}\left(\Hilb^{(\hat{\bold n})}(\mathbb P^2)\right)\bold q^{\bold n}=\prod_{\ell=0}^2\left(\sum_{\bold n_\ell\ge\boldsymbol 0}\chi_{-y}^{\rm vir}(P_\ell)\bold q^{\bold n_\ell}\right),
\end{equation}
with $\hat{\bold n}$ defined as in section \ref{sec:nhs_iso}, and since the left-hand side doesn't depend on $\mathfrak q_{1,2}$, we can perform the computation by taking the iterated limits $\mathfrak q_1\to+\infty$, $\mathfrak q_2\to+\infty$ or $\mathfrak q_1\to 0x$, $\mathfrak q_2\to 0$. In each one of the three affine patches the weights of the torus action will be
\begin{equation}
\begin{aligned}
&\mathfrak q_{1,(0)}=\mathfrak q_1\\
&\mathfrak q_{1,(1)}=1/\mathfrak q_1\\
&\mathfrak q_{1,(2)}=1/\mathfrak q_2
\end{aligned}\qquad\qquad\qquad
\begin{aligned}
&\mathfrak q_{2,(0)}=\mathfrak q_2\\
&\mathfrak q_{2,(1)}=\mathfrak q_2/\mathfrak q_1\\
&\mathfrak q_{2,(2)}=\mathfrak q_1/\mathfrak q_2
\end{aligned}
\end{equation}

We will study separately the three patches $\ell=0,1,2$.
First of all we notice that since the $\chi_{-y}-$genus is multiplicative, the first contribution coming from $N_{\mu_0}^y(\mathfrak q_1,\mathfrak q_2)$ coincides with the same contribution arising in the context of standard Hilbert schemes. It wash shown in \cite{Liu2002HirzebruchXG} that
\begin{subequations}
\begin{align}
&\lim_{\mathfrak q_2\to+\infty}\lim_{\mathfrak q_1\to+\infty}\frac{1}{N_{\mu_0,\mu_1}^{-y}(\mathfrak q_{1,(0)},\mathfrak q_{2,(0)})}=y^{|\mu_0|-M_0},\\
&\lim_{\mathfrak q_2\to+\infty}\lim_{\mathfrak q_1\to+\infty}\frac{1}{N_{\mu_0,\mu_1}^{-y}(\mathfrak q_{1,(1)},\mathfrak q_{2,(1)})}=y^{|\mu_0|},\\
&\lim_{\mathfrak q_2\to+\infty}\lim_{\mathfrak q_1\to+\infty}\frac{1}{N_{\mu_0,\mu_1}^{-y}(\mathfrak q_{1,(2)},\mathfrak q_{2,(2)})}=y^{|\mu_0|+s(\mu_0)},\qquad s(\mu_0)=\#\{s\in Y_{\mu_0'}:a(s)\le l(s)\le a(s)+1\},
\end{align}
\end{subequations}
so that we just need to evaluate the other contributions. Starting from $T^{-y}_{\mu_0,\mu_1}$ we get
\begin{subequations}
\begin{align}
&\lim_{\mathfrak q_2\to+\infty}\lim_{\mathfrak q_1\to+\infty}\left[\prod_{i=1}^{M_0}\prod_{j=1}^{\mu_{0,i}'-\mu_{1,i}'}\ddfrac{\left(1-\mathfrak q_{1,(0)}^{-i}\mathfrak q_{2,(0)}^{-j-\mu_{1,i}'}\right)}{\left(1-y\mathfrak q_{1,(0)}^{-i}\mathfrak q_{2,(0)}^{-j-\mu_{1,i}'}\right)}\right]=1,\\
&\lim_{\mathfrak q_2\to+\infty}\lim_{\mathfrak q_1\to+\infty}\left[\prod_{i=1}^{M_0}\prod_{j=1}^{\mu_{0,i}'-\mu_{1,i}'}\ddfrac{\left(1-\mathfrak q_{1,(1)}^{-i}\mathfrak q_{2,(1)}^{-j-\mu_{1,i}'}\right)}{\left(1-y\mathfrak q_{1,(1)}^{-i}\mathfrak q_{2,(1)}^{-j-\mu_{1,i}'}\right)}\right]=y^{-1},\\
&\lim_{\mathfrak q_2\to+\infty}\lim_{\mathfrak q_1\to+\infty}\left[\prod_{i=1}^{M_0}\prod_{j=1}^{\mu_{0,i}'-\mu_{1,i}'}\ddfrac{\left(1-\mathfrak q_{1,(2)}^{-i}\mathfrak q_{2,(2)}^{-j-\mu_{1,i}'}\right)}{\left(1-y\mathfrak q_{1,(2)}^{-i}\mathfrak q_{2,(2)}^{-j-\mu_{1,i}'}\right)}\right]=1,
\end{align}
\end{subequations}
whence
\begin{subequations}
\begin{align}
&\lim_{\mathfrak q_2\to+\infty}\lim_{\mathfrak q_1\to+\infty}T_{\mu_0,\mu_1}^{-y}(\mathfrak q_{1,(0)},\mathfrak q_{2,(0)})=1,\\
&\lim_{\mathfrak q_2\to+\infty}\lim_{\mathfrak q_1\to+\infty}T_{\mu_0,\mu_1}^{-y}(\mathfrak q_{1,(1)},\mathfrak q_{2,(1)})=y^{-|\mu_0\setminus\mu_1|},\\
&\lim_{\mathfrak q_2\to+\infty}\lim_{\mathfrak q_1\to+\infty}T_{\mu_0,\mu_1}^{-y}(\mathfrak q_{1,(2)},\mathfrak q_{2,(2)})=1.
\end{align}
\end{subequations}
Finally we need to take care of the limit involving $W^{-y}_{\mu_0,\dots,\mu_N}(\mathfrak q_1,\mathfrak q_2)$ and in order to tackle let us first point out that we can rewrite $W^{-y}_{\mu_0,\dots,\mu_N}$ in the following simpler form:
\begin{equation}
\begin{split}
W_{\mu_0,\dots,\mu_N}^{-y}(\mathfrak q_1,\mathfrak q_2)=\prod_{k=1}^{N}\prod_{s\in Y_{\mu_0^{\rm rec}}}\ddfrac{\left(1-\mathfrak q_1^{l_k(s)}\mathfrak q_2^{-a_0(s)-1}\right)\left(1-\mathfrak q_1^{l_{k-1}(s)}\mathfrak q_2^{-a_k(s)-1}\right)}{\left(1-y\mathfrak q_1^{l_k(s)}\mathfrak q_2^{-a_0(s)-1}\right)\left(1-y\mathfrak q_1^{l_{k-1}(s)}\mathfrak q_2^{-a_k(s)-1}\right)}\cdot\\
\cdot\ddfrac{\left(1-y\mathfrak q_1^{l_k(s)}\mathfrak q_2^{-a_k(s)-1}\right)\left(1-y\mathfrak q_1^{l_{k-1}(s)}\mathfrak q_2^{-a_0(s)-1}\right)}{\left(1-\mathfrak q_1^{l_k(s)}\mathfrak q_2^{-a_k(s)-1}\right)\left(1-\mathfrak q_1^{l_{k-1}(s)}\mathfrak q_2^{-a_0(s)-1}\right)},
\end{split}
\end{equation}
where $\mu_0^{\rm rec}$ is the smallest rectangular partition containing $\mu_0$ and $a_k(s)$ (resp. $l_k(s)$) denotes the arm length (resp. leg length) of the box $s$ with respect to $Y_{\mu_k}$. Then, by recalling that the partitions labelling the $T-$fixed points are included one into the other as $\mu_1\subseteq\cdots\subseteq\mu_N\subseteq\mu_0\subseteq\mu_0^{\rm rec}$ it's easy to realize that, in the case $\ell=0$, one gets
\begin{equation}
\lim_{\mathfrak q_2\to+\infty}\lim_{\mathfrak q_1\to+\infty}\ddfrac{1-\mathfrak q_1^{l_k(s)}\mathfrak q_2^{-a_0(s)-1}}{1-y\mathfrak q_1^{l_k(s)}\mathfrak q_2^{-a_0(s)-1}}=\begin{cases}
1 &{\rm for}\ l_k(s)\le 0\\
y^{-1} &{\rm for}\ l_k(s)>0
\end{cases},
\end{equation}
and similarly in every other case:
\begin{subequations}
\begin{align*}
&\lim_{\mathfrak q_2\to+\infty}\lim_{\mathfrak q_1\to+\infty}\ddfrac{1-\mathfrak q_1^{l_{k-1}(s)}\mathfrak q_2^{-a_k(s)-1}}{1-y\mathfrak q_1^{l_{k-1}(s)}\mathfrak q_2^{-a_k(s)-1}}=\begin{cases}
1 &{\rm for}\ l_{k-1}(s)\le 0\\
y^{-1} &{\rm for}\ l_{k-1}(s)>0
\end{cases},\\
&\lim_{\mathfrak q_2\to+\infty}\lim_{\mathfrak q_1\to+\infty}\ddfrac{1-y\mathfrak q_1^{l_k(s)}\mathfrak q_2^{-a_k(s)-1}}{1-\mathfrak q_1^{l_k(s)}\mathfrak q_2^{-a_k(s)-1}}=\begin{cases}
1 &{\rm for}\ l_{k}(s)\le 0\\
y &{\rm for}\ l_k(s)>0
\end{cases},\\
&\lim_{\mathfrak q_2\to+\infty}\lim_{\mathfrak q_1\to+\infty}\ddfrac{1-y\mathfrak q_1^{l_{k-1}(s)}\mathfrak q_2^{-a_0(s)-1}}{1-\mathfrak q_1^{l_{k-1}(s)}\mathfrak q_2^{-a_0(s)-1}}=\begin{cases}
1 &{\rm for}\ l_{k-1}(s)\le 0\\
y &{\rm for}\ l_{k-1}(s)>0
\end{cases},
\end{align*}
\end{subequations}
so that finally
\begin{equation}
\lim_{\mathfrak q_2\to+\infty}\lim_{\mathfrak q_1\to+\infty}W^{-y}_{\mu_0,\dots,\mu_N}(\mathfrak q_{1,(0)},\mathfrak q_{2,(0)})=1.
\end{equation}
It is easy to see that the same holds true also for $\ell=2$:
\begin{equation}
\lim_{\mathfrak q_2\to+\infty}\lim_{\mathfrak q_1\to+\infty}W^{-y}_{\mu_0,\dots,\mu_N}(\mathfrak q_{1,(2)},\mathfrak q_{2,(2)})=1,
\end{equation}
while the case $\ell=1$ is more difficult, even though the analysis of the different cases can be carried out exactly in the same way. We then introduce the following notation:
\begin{equation}
s(\mu_{i_1},\mu_{i_2})=\#\left\{s\in Y_{\mu_0^{\rm rec}}:l_{i_1}(s)>a_{i_2}(s)+1\vee l_{i_1}(s)=a_{i_2}(s)+1,a_{i_2}(s)<-1\right\},
\end{equation}
and we get
\begin{equation}
\lim_{\mathfrak q_2\to+\infty}\lim_{\mathfrak q_1\to+\infty}W^{-y}_{\mu_0,\dots,\mu_N}(\mathfrak q_{1,(1)},\mathfrak q_{2,(1)})=\prod_{k=1}^Ny^{s(\mu_k,\mu_k)+s(\mu_{k-1},\mu_0)-s(\mu_k,\mu_0)-s(\mu_{k-1},\mu_k)}.
\end{equation}
Finally, by putting everything together, we have an explicit expression for \eqref{gen_chiy_P2}.
\begin{equation}\label{cinque}
\begin{split}
\sum_{\bold n}\chi_{-y}^{\rm vir}\left(\Hilb^{(\hat{\bold n})}(\mathbb P^2)\right)\bold q^{\bold n}=\left(\sum_{\bold n}\bold q^{\bold n}\sum_{\{\mu_i\}}y^{|\mu_0|+M_0}\right)\left(\sum_{\bold n}\bold q^{\bold n}\sum_{\{\mu_i\}}y^{|\mu_0|-|\mu_0\setminus\mu_1|}\prod_{k=1}^Ny^{s(\mu_k,\mu_k)+s(\mu_{k-1},\mu_0)}\right.\\
y^{-s(\mu_k,\mu_0)-s(\mu_{k-1},\mu_k)}\Bigg)\left(\sum_{\bold n}\bold q^{\bold n}\sum_{\{\mu_i\}}y^{|\mu_0|-s(\mu_0)}\right)
\end{split}
\end{equation}

\subsection{Case 2: $S=\mathbb P^1\times\mathbb P^1$}
Similarly to previous case, we are interested in studying the following generating function
\begin{equation}
\sum_{\bold n}\chi_{-y}^{\rm vir}\left(\Hilb^{(\hat{\bold n})}(\mathbb P^1\times\mathbb P^1)\right)\bold q^{\bold n}=\prod_{\ell=0}^3\left(\sum_{\bold n_\ell\ge\boldsymbol 0}\chi_{-y}^{\rm vir}(P_\ell)\bold q^{\bold n_\ell}\right),
\end{equation}
and we can still perform the computation by taking the successive limits $\mathfrak q_1\to+\infty$, $\mathfrak q_2\to+\infty$ or $\mathfrak q_1\to 0$, $\mathfrak q_2\to 0$. The four patches are now indexed by $\ell=(00),(01),(10),(11)$, and the characters $\mathfrak q_{i,(\ell)}$ can be identified to be in this case
\begin{equation}
\begin{aligned}
&\mathfrak q_{1,(00)}=\mathfrak q_1\\
&\mathfrak q_{1,(01)}=\mathfrak q_1\\
&\mathfrak q_{1,(10)}=1/\mathfrak q_1\\
&\mathfrak q_{1,(11)}=1/\mathfrak q_1\\
\end{aligned}\qquad\qquad\qquad
\begin{aligned}
&\mathfrak q_{2,(00)}=\mathfrak q_2\\
&\mathfrak q_{2,(01)}=1/\mathfrak q_2\\
&\mathfrak q_{2,(10)}=\mathfrak q_2\\
&\mathfrak q_{2,(11)}=1/\mathfrak q_2\\
\end{aligned}
\end{equation}

An analysis similar to the one carried out in the previous section enables then us to conclude the following
\begin{equation}
\begin{aligned}
&\lim_{\mathfrak q_2\to+\infty}\lim_{\mathfrak q_1\to+\infty}1/N^{-y}_{\mu_0}(\mathfrak q_{1,(00)},\mathfrak q_{2,(00)})=y^{|\mu_0|-M_0},\\
&\lim_{\mathfrak q_2\to+\infty}\lim_{\mathfrak q_1\to+\infty}1/N^{-y}_{\mu_0}(\mathfrak q_{1,(00)},\mathfrak q_{2,(01)})=y^{|\mu_0|},\\
&\lim_{\mathfrak q_2\to+\infty}\lim_{\mathfrak q_1\to+\infty}1/N^{-y}_{\mu_0}(\mathfrak q_{1,(00)},\mathfrak q_{2,(10)})=y^{|\mu_0|},\\
&\lim_{\mathfrak q_2\to+\infty}\lim_{\mathfrak q_1\to+\infty}1/N^{-y}_{\mu_0}(\mathfrak q_{1,(00)},\mathfrak q_{2,(11)})=y^{|\mu_0|+M_0},\\
\end{aligned}\qquad\qquad
\begin{aligned}
&\lim_{\mathfrak q_2\to+\infty}\lim_{\mathfrak q_1\to+\infty}T^{-y}_{\mu_0,\mu_1}(\mathfrak q_{1,(00)},\mathfrak q_{2,(00)})=1,\\
&\lim_{\mathfrak q_2\to+\infty}\lim_{\mathfrak q_1\to+\infty}T^{-y}_{\mu_0,\mu_1}(\mathfrak q_{1,(00)},\mathfrak q_{2,(01)})=1,\\
&\lim_{\mathfrak q_2\to+\infty}\lim_{\mathfrak q_1\to+\infty}T^{-y}_{\mu_0,\mu_1}(\mathfrak q_{1,(00)},\mathfrak q_{2,(10)})=y^{-|\mu_0\setminus\mu_1|},\\
&\lim_{\mathfrak q_2\to+\infty}\lim_{\mathfrak q_1\to+\infty}T^{-y}_{\mu_0,\mu_1}(\mathfrak q_{1,(00)},\mathfrak q_{2,(11)})=y^{-|\mu_0\setminus\mu_1|},\\
\end{aligned}
\end{equation}
and
\begin{equation}
\begin{aligned}
&\lim_{\mathfrak q_2\to+\infty}\lim_{\mathfrak q_1\to+\infty}W^{-y}_{\mu_0,\mu_1,\dots,\mu_N}(\mathfrak q_{1,(00)},\mathfrak q_{2,(00)})=1,\\
&\lim_{\mathfrak q_2\to+\infty}\lim_{\mathfrak q_1\to+\infty}W^{-y}_{\mu_0,\mu_1,\dots,\mu_N}(\mathfrak q_{1,(00)},\mathfrak q_{2,(01)})=1,\\
&\lim_{\mathfrak q_2\to+\infty}\lim_{\mathfrak q_1\to+\infty}W^{-y}_{\mu_0,\mu_1,\dots,\mu_N}(\mathfrak q_{1,(00)},\mathfrak q_{2,(10)})=1,\\,
&\lim_{\mathfrak q_2\to+\infty}\lim_{\mathfrak q_1\to+\infty}W^{-y}_{\mu_0,\mu_1,\dots,\mu_N}(\mathfrak q_{1,(00)},\mathfrak q_{2,(11)})=1,\\
\end{aligned}
\end{equation}
so that, by putting everything together, we have
\begin{equation}\label{sei}
\begin{split}
\sum_{\bold n}\chi_{-y}^{\rm vir}\left(\Hilb^{(\hat{\bold n})}(\mathbb P^1\times\mathbb P^1)\right)\bold q^{\bold n}=\left(\sum_{\bold n}\bold q^{\bold n}\sum_{\{\mu_i\}}y^{|\mu_0|-M_0}\right)\left(\sum_{\bold n}\bold q^{\bold n}\sum_{\{\mu_i\}}y^{|\mu_0|}\right)\left(\sum_{\bold n}\bold q^{\bold n}\sum_{\{\mu_i\}}y^{|\mu_0|-|\mu_0\setminus\mu_1|}\right)\\
\left(\sum_{\bold n}\bold q^{\bold n}\sum_{\{\mu_i\}}y^{|\mu_0|-|\mu_0\setminus\mu_1|+M_0}\right).
\end{split}
\end{equation}


\bibliographystyle{amsplain}
\bibliography{refs.bib}

\vspace{20mm}

\begin{minipage}{.65\textwidth}
\noindent\textsc{S.I.S.S.A. - Scuola Internazionale Superiore di Studi Avanzati -
via Bonomea, 265 - 34136 Trieste ITALY}\vspace{3mm}

\noindent\textsc{I.N.F.N. – Istituto Nazionale di Fisica Nucleare - Sezione di Trieste}\vspace{3mm}

\noindent\textsc{I.G.A.P. – Institute for Geometry and Physics - 
via Beirut, 4 - 34100 Trieste ITALY}
\end{minipage}


\end{document}